\def\eqref#1{equation~\ref{#1}}
\def\Eqref#1{Equation~\ref{#1}}
\def\1{\bm{1}}
\DeclareMathAlphabet{\mathsfit}{\encodingdefault}{\sfdefault}{m}{sl}
\SetMathAlphabet{\mathsfit}{bold}{\encodingdefault}{\sfdefault}{bx}{n}
\newcommand{\R}{\mathbb{R}}
\DeclareMathOperator*{\argmax}{arg\,max}
\DeclareMathOperator*{\argmin}{arg\,min}
\pgfplotsset{compat=newest}
\newcommand{\cmarkg}{{\color{green}\ding{51}}}
\newcommand{\xmarkr}{{\color{red}\ding{55}}}
\newcolumntype{G}{>{\columncolor[gray]{0.9}}l}
\newcommand{\cellg}{\cellcolor[gray]{0.9}}
\newcommand{\C}{\mathcal{C}}
\newcommand{\G}{\mathcal{G}}
\newcommand{\N}{\mathbb{N}}
\newcommand{\Scal}{\mathcal{S}}
\renewcommand{\O}{\mathcal{O}}
\newcommand{\T}{\mathsf{T}}
\newcommand{\X}{\mathcal{X}}
\newcommand{\FW}{\textup{FW}}
\newcommand{\away}{\textup{A}}
\newcommand{\Euc}{\textup{Euc}}
\newcommand*\circledaux[1]{\tikz[baseline=(char.base)]{
		\node[shape=circle,draw,inner sep=0.8pt] (char) {#1};}}
\NewDocumentCommand{\circled}{m o }{%
	\IfNoValueTF{#2}{\circledaux{#1}}{\stackrel{\circledaux{#1}}{#2}}%
}
\newcommand{\innp}[1]{\langle #1 \rangle}
\newcommand\Ccancel[2][black]{
	\let\OldcancelColor\CancelColor
	\renewcommand\CancelColor{\color{#1}}
	\cancel{#2}
	\renewcommand\CancelColor{\OldcancelColor}
}
\crefname{equation}{}{}
\DeclareMathOperator{\cl}{cl}
\DeclareMathOperator{\interior}{int}
\DeclareMathOperator{\relint}{ri}
\DeclareMathOperator{\dom}{dom}
\DeclareMathOperator{\conv}{conv}
\DeclareMathOperator{\dist}{dist}
\DeclareMathOperator{\vertex}{Vert}
\newcommand{\ie}{\textit{i.e.}}
\newcommand{\eg}{\textit{e.g.}\xspace}
\newcommand{\BregFW}{\texttt{BregFW}\xspace}
\newcommand{\BregAFW}{\texttt{BregAFW}\xspace}
\newcommand{\EucFW}{\texttt{EucFW}\xspace}
\newcommand{\EucAFW}{\texttt{EucAFW}\xspace}
\newcommand{\ShortFW}{\texttt{ShortFW}\xspace}
\newcommand{\ShortAFW}{\texttt{ShortAFW}\xspace}
\newcommand{\OpenFW}{\texttt{OpenFW}\xspace}
\newcommand{\OpenAFW}{\texttt{OpenAFW}\xspace}
\newcommand{\MD}{\texttt{MD}\xspace}
\newcommand{\ProjGD}{\texttt{ProjGD}\xspace}
\DeclarePairedDelimiterX{\Set}[2]{\lbrace}{\rbrace}{#1\,\delimsize\vert\,#2}
\theoremstyle{definition}
\numberwithin{equation}{section}
\newtheorem{theorem}{Theorem}[section]
\newtheorem{lemma}[theorem]{Lemma}
\newtheorem{proposition}[theorem]{Proposition}
\newtheorem{definition}[theorem]{Definition}
\newtheorem{remark}[theorem]{Remark}
\newtheorem{assumption}[theorem]{Assumption}
\newtheorem{example}[theorem]{Example}
\patchcmd{\algocf@makecaption@ruled}{\hsize}{\textwidth}{}{} 
\patchcmd{\@algocf@start}{-1.5em}{0em}{}{} 
\title{Fast Frank--Wolfe Algorithms\\
with Adaptive Bregman Step-Size for Weakly Convex Functions}
\author{Shota Takahashi\\
The University of Tokyo\\
\texttt{\href{mailto:shota@mist.i.u-tokyo.ac.jp}{shota@mist.i.u-tokyo.ac.jp}} \\
\And
Sebastian Pokutta\\
Zuse Institute Berlin \&\\
Technische Universit\"{a}t Berlin\\
\texttt{\href{mailto:pokutta@zib.de}{pokutta@zib.de}}\\
\And
Akiko Takeda\\
The University of Tokyo \&
RIKEN AIP\\
\texttt{\href{mailto:takeda@mist.i.u-tokyo.ac.jp}{takeda@mist.i.u-tokyo.ac.jp}}\\
}
\begin{document}

\maketitle

\begin{abstract}
    We propose Frank--Wolfe (FW) algorithms with an adaptive Bregman step-size strategy for smooth adaptable (also called: relatively smooth) (weakly-) convex functions. This means that the gradient of the objective function is not necessarily Lipschitz continuous, and we only require the smooth adaptable property. Compared with existing FW algorithms, our assumptions are less restrictive. 
    We establish convergence guarantees in various settings, including convergence rates ranging from sublinear to linear, depending on the assumptions for convex and nonconvex objective functions. 
    Assuming that the objective function is weakly convex and satisfies the local quadratic growth condition, we provide both local sublinear and local linear convergence with respect to the primal gap.
    We also propose a variant of the away-step FW algorithm using Bregman distances over polytopes. We establish faster global convergence (up to a linear rate) for convex optimization under the H\"{o}lder error bound condition and local linear convergence for nonconvex optimization under the local quadratic growth condition.
    Numerical experiments demonstrate that our proposed FW algorithms outperform existing methods. 
\end{abstract}

\section{Introduction}
In this paper, we consider constrained optimization problems of the form
\begin{align}
    \min_{x \in P} \quad f(x), \label{prob:fw}
\end{align}
where $f:\R^n \to (-\infty,+\infty]$ is a continuously differentiable function and $P\subset\R^n$ is a compact convex set. We are interested in both convex and nonconvex $f$ and assume that we have first-order oracle access to $f$, \ie, given $x\in\R^n$, we can compute $\nabla f(x)$.
The Frank--Wolfe (FW) algorithm is a projection-free first-order method. Instead of requiring access to a projection oracle, the FW algorithm requires only access to a so-called linear minimization oracle (LMO), which for a given linear function $a \in \R^n$ computes $y \in \argmin_{v \in P}\langle a, v\rangle$. LMOs are often much cheaper than projection oracles, as shown in \cite{Combettes2021-ct} (see also~\cite[Table 1.1]{Braun2025-sz}). Consequently, in practice, FW algorithms are often faster than projected gradient methods even when the projection operation is nontrivial. Additionally, FW algorithms tend to be numerically quite robust and stable due to their affine-invariance and can also be used, \eg, to provide theoretical guarantees for the approximate Carath\'{e}odory problem by~\cite{cp2019approxCara}.

\paragraph{Overview of Existing FW Studies:} 
The FW algorithm was originally proposed by~\cite{Frank1956-cq} and was independently rediscovered and extended by~\cite{Levitin1966-sm} as the conditional gradient method; we will use these terms interchangeably. \cite{Canon1968-sr} established an initial lower bound for the rate of the FW algorithm. \cite{GueLat1986-bg} improved this bound and also provided the first analysis of the away-step FW algorithm by~\cite{Wolfe1970-ax}. \cite{Jaggi2013-oz} provided a more detailed convergence analysis of the FW algorithm, establishing a new lower bound that demonstrated a trade-off between sparsity and error. Concurrently,~\cite{lan2013complexity} examined the complexity of linear programming-based first-order methods, establishing a similar lower bound. For a comprehensive review of FW algorithms, we refer the interested reader to the survey by~\cite{Braun2025-sz} and the brief introduction by~\cite{Pokutta2024-qu}.

The classical FW algorithm (Algorithm~\ref{alg:fw}) has been improved in two main directions to enhance both performance and convergence. One focuses on refining the step-size rule.
The short-step strategy computes $\gamma_t$ using the Lipschitz constant $L$ of $\nabla f$. 
\cite{Pedregosa2020-ay} introduced an adaptive step-size strategy that dynamically estimates $L$.
They proved that this strategy is at least as effective as the short-step strategy. It was enhanced by~\cite{Pokutta2024-qu} to improve numerical stability. 

Another direction is to modify the classical FW algorithm to eliminate the zigzag behavior when approaching the optimal face containing the optimal solution $x^*$.
This behavior inspired~\cite{Wolfe1970-ax} to propose the away-step FW algorithm in 1970 that shortcuts the zigzagging by removing atoms that slow down the iterative sequence. \cite{Lacoste-Julien2015-gb} showed the linear convergence of the away-step FW algorithm. Their analysis\footnote{Similar analyses based on the pyramidal width have been developed for other advanced variants, such as the pairwise FW algorithm~\citep{Lacoste-Julien2015-gb}, Wolfe's algorithm~\citep{Lacoste-Julien2015-gb}, the blended conditional gradient method~\citep{Braun2019-ra}, and blended pairwise conditional gradient methods and similar variants~\citep{Combettes2020-jj,Tsuji2022-gq}. Even Nesterov-style acceleration is possible under weak assumptions~\citep{Diakonikolas2020-xi} building upon this analysis.} introduced a geometric constant, the pyramidal width, that measures the conditioning of the polytope $P$, representing the feasible region. 

\paragraph{Our Research Idea:}
To derive the convergence rate of the above-mentioned FW algorithms, previous studies have typically required $L$-smoothness, \ie, that $\nabla f$ is Lipschitz continuous, 
and convexity for the objective function $f$, although exceptions exist.\footnote{Quite recently,~\cite{Vyguzov2025-ha} proposed a FW algorithm with the Bregman distance, similar to ours, yet it is restricted to convex problems and provides only partial convergence guarantees.} These assumptions often narrow the application of FW algorithms. Even if they do not satisfy $L$-smoothness, there are many functions that satisfy the $L$-smooth adaptable property ($L$-smad) with kernel generating distances $\phi$ (see Definition~\ref{def:kernel-generating-distance}) by choosing $\phi$ well to match the function $f$. Since the $L$-smad property is consistent with $L$-smoothness when $\phi=\frac{1}{2}\|\cdot\|^2$, the class of $L$-smad functions includes the $L$-smooth function class. Other first-order algorithms, such as the Bregman proximal gradient algorithm, are often analyzed under the $L$-smad property (see Appendix~\ref{sec:related-work} and~\cite{Bolte2018-zt,Hanzely2021-sz,Rebegoldi2018-pa,Takahashi2022-ml,Takahashi2024-wz,Yang2025-ls}).
$L$-smad functions appear in many applications, such as nonnegative linear inverse problems~\citep{Bauschke1997-vk,Takahashi2024-wz}, $\ell_p$ loss problems~\citep{Kyng2015-uo,Maddison2021-nt}, phase retrieval~\citep{Bolte2018-zt,Takahashi2022-ml}, nonnegative matrix factorization (NMF)~\citep{Mukkamala2019-mk,Takahashi2026-rv}, and blind deconvolution~\citep{Takahashi2023-uh}. 
Any $\C^2$ function is locally $L$-smooth over compact sets, but the resulting Lipschitz constant
$L$ can be overly conservative, hindering practical performance. Functions outside $\C^2$ may not satisfy $L$-smoothness at all.
Moreover, functions in the above-mentioned applications are weakly convex on compact sets. For nonconvex functions, although~\cite{Lacoste-Julien2016-mj} established sublinear convergence, little else has been done, and simply tracing existing research does not achieve convergence rates better than sublinear convergence. 
Then the following question naturally arises: 
\begin{center}
\textit{Is it possible to relax the $L$-smoothness and convexity assumptions often made in FW algorithm studies while still achieving the theoretical guarantee of linear convergence rates?}
\end{center}

\paragraph{Contribution:}
The answer to the above question is yes. 
We develop FW algorithms for $L$-smad $f$ and give theoretical guarantees of linear convergence under weaker assumptions (H\"{o}lder error bound (HEB) condition with parameter $q \geq 1$ for convex $f$ and the special case with $q=2$ called quadratic growth for nonconvex $f$) than the strong convexity assumption that is often made when linear convergence is shown for FW algorithms. 
The convexity assumption on $f$ is also weakened to weakly convex, \ie, $f + \frac{\rho}{2}\|\cdot\|^2$ is convex with some $\rho > 0$ (see, \eg,~\cite[Examples 3.1 and 3.2]{Davis2018-bb} and~\cite{Nurminskii1975-vp}).
Any $\C^2$ function on a compact set is weakly convex~\citep{Vial1983-oo,Wu2007-dt} (see Proposition~\ref{prop:twice-cont-diff-weakly-conv}); since $\rho$ is only for the theoretical guarantee and not for the algorithm, there is no need for its estimation, and considering weakly convex $f$ greatly broadens applicability. 

Table~\ref{tab:rate} summarizes our contributions for the various cases that we consider. Relaxing the assumption from $L$-smoothness to the $L$-smad property extends the FW algorithm to a broader range of problems, while making the construction of the FW algorithm and its theoretical guarantee much more difficult. This is because we use the Bregman distance $D_\phi$, which is an extension of the Euclidean distance, for the $L$-smad property.
While \Eqref{ineq:bregman-nu0} shown later is simple with $\nu=1$ when using the Euclidean distance, $\nu > 0$ cannot be eliminated for the general Bregman distance; $1 + \nu$ represents its scaling exponent. 
$\nu$ needs to be estimated during the algorithm since its exact value is unknown, and thus, the extension to an $L$-smad $f$ is not straightforward.

\footnotetext[3]{$f$ conv. means the convexity of $f$, IP means $x^* \in \interior P$, and poly. means that $P$ is a polytope.}
\footnotetext[4]{$\O(\log\epsilon^{-1})$ if $q = 2$ or if $q \neq 2$ and $t \leq t_0$ with $t_0\in\N$; otherwise $\O(\epsilon^{(2 - q)/q})$ \citep{KDP2018jour}.}
\footnotetext[5]{$\O(\log\epsilon^{-1})$ if the exponent of the HEB condition $q$ equals the scaling exponent of the Bregman distance $1 + \nu$ ($(\nu, q) = (1, 2)$ for the away-step FW algorithm). For $q > 1 + \nu$, it holds if $t \leq t_0$ with $t_0\in\N$; otherwise $\O(\epsilon^{(1 + \nu - q)/\nu q})$.}
\footnotetext[6]{$\O(\log\epsilon^{-1})$ holds if $\nu = 1$. For $\nu \in (0,1)$, it holds if $t \leq t_0$ with $t_0\in\N$; otherwise $\O(\epsilon^{(\nu - 1)/2\nu})$.}
\begin{table}[!t]
    \caption{Our convergence rates are generalizations of existing rates. Moreover, our convergence rate is better than $\O(\epsilon^{-1})$ in some cases.
    For nonconvex functions, convergence is measured using the Frank--Wolfe gap $\langle \nabla f(x_t),x_t - v_t \rangle \leq \epsilon$, instead of the primal gap $f(x_t) - f^* \leq \epsilon$.
    Convergence rates for weakly convex optimization hold locally.
    }
    \label{tab:rate}
    \centering\footnotesize\setlength{\tabcolsep}{.16em}
    {
    \begin{tabular}{ccccclG}
        \toprule
        & \multicolumn{4}{c}{Assumptions\footnotemark[3]} & \multicolumn{2}{c}{Convergence rate}\\ \cmidrule(lr){2-5} \cmidrule(lr){6-7}
        FW & $f$ conv. & $f$ growth & IP & poly. & $L$-smooth & $L$-smad ($\nu > 0$)\\ \midrule
        Alg.\ref{alg:fw}\,(any step-size) & conv. & \xmarkr & \xmarkr & \xmarkr & $\O(\epsilon^{-1})$ & $\O(\epsilon^{-1/\nu})$ (Thm.\ref{theorem:sublinear-convex})\\ 
        Alg.\ref{alg:fw}\,(short\,\&\,adapt.) & conv. & $q$-HEB & \cmarkg & \xmarkr & $\O(\log\epsilon^{-1})$,\,$\O(\epsilon^{\frac{2 - q}{q}})$\footnotemark[4] & $\O(\log\epsilon^{-1})$,\,$\O(\epsilon^{\frac{1 + \nu - q}{\nu q}})$\,(Thm.\,\ref{theorem:convergence-rate-inner-optima})\footnotemark[5]\\
        away-step FW & conv. & $q$-HEB & \xmarkr & \cmarkg & $\O(\log\epsilon^{-1})$,\,$\O(\epsilon^{\frac{2 - q}{q}})$\footnotemark[4] & $\O(\log\epsilon^{-1})$,\,$\O(\epsilon^{\frac{1 + \nu - q}{\nu q}})$\,(Thm.\,\ref{theorem:convergence-rate-away-step-convex})\footnotemark[5]\\ \midrule
        Alg.\ref{alg:fw}\,(any step-size) & weak & 2-HEB & \xmarkr & \xmarkr & \cellg $\O(\epsilon^{-1})$ &$\O(\epsilon^{-1/\nu})$ (Thm.~\ref{theorem:local-sublinear-convergence-weakly-convex}) \\
        Alg.\ref{alg:fw}\,(short\,\&\,adapt.) & weak & 2-HEB & \cmarkg & \xmarkr & \cellg $\O(\log\epsilon^{-1})$ & $\O(\log\epsilon^{-1})$, $\O(\epsilon^{\frac{\nu - 1}{2\nu}})$ (Thm.~\ref{theorem:convergence-rate-inner-optima-weakly-convex})\footnotemark[6]\\
        away-step FW & weak & 2-HEB & \xmarkr & \cmarkg & \cellg $\O(\log\epsilon^{-1})$ & $\O(\log\epsilon^{-1})$, $\O(\epsilon^{\frac{\nu - 1}{2\nu}})$ (Thm.~\ref{theorem:convergence-rate-away-step-weakly-convex})\footnotemark[6]\\ \midrule
        Alg.\ref{alg:fw}\,(any step-size) & \xmarkr & \xmarkr & \xmarkr & \xmarkr & $\O(\epsilon^{-2})$ & $\O(\epsilon^{-1 -1/\nu})$ (Thm.~\ref{theorem:sublinear-nonconvex})\\
        \bottomrule
    \end{tabular}
    }
\end{table}

\setcounter{footnote}{6}
\begin{itemize}
\item We establish sublinear convergence for the case of convex $L$-smad functions. Assuming that $f$ is convex and satisfies the HEB condition, and that the minimizer $x^*\in\interior P$, we provide faster convergence (up to a linear rate). In this setting, our algorithm always converges linearly if the exponent of the HEB condition $q$ equals the scaling exponent of the Bregman distance $1 + \nu$, \ie, $q = 1 + \nu$.\footnote{As discussed in Sec.~\ref{sec:preliminaries}, linear convergence with the Euclidean distance is shown in \cite{Garber2015-de} under the quadratic growth over strongly convex sets. This corresponds to the $q = 1 + \nu$ case with $\nu=1$.} For $q > 1 + \nu$, if $t \leq t_0$ with some $t_0\in\N$, linear convergence holds; otherwise, $\O(\epsilon^{(1 + \nu - q)/\nu q})$, which is faster than existing sublinear rates. A similar argument also applies to the nonconvex case ($q =2$) and the away-step FW algorithm.
\item For nonconvex optimization, assuming that $f$ is weakly convex and satisfies the local quadratic growth condition, we establish local linear convergence. To the best of our knowledge, this is the first FW algorithm with linear convergence guarantees for a certain class of nonconvex optimization problems, albeit for a rather restricted subclass.
These results are also new for the standard setting: $L$-smooth $f$. 
Without weak convexity, we prove global sublinear convergence to a stationary point of~\Eqref{prob:fw} for nonconvex $L$-smad $f$. 
\item We also propose a variant of the away-step FW algorithm with the Bregman distance and establish its linear convergence under the HEB condition for convex optimization and under the local quadratic growth condition for weakly convex optimization. 
\end{itemize}

\section{Preliminaries}\label{sec:preliminaries}
Let $C$ be a nonempty open convex subset of $\R^n$. Let $\dist(x, C) := \inf_{y\in C}\|x - y\|$ and let $\C^k$ be the class of $k$-times continuously differentiable functions for $k \geq 0$.
\begin{definition}[Kernel Generating Distance~{\citep{Bolte2018-zt}}]\label{def:kernel-generating-distance}
A function $\phi:\R^n\to(-\infty,+\infty]$ is called a \emph{kernel generating distance} associated with $C$ if it satisfies the following conditions: (i) $\phi$ is proper, lower semicontinuous, and convex, with $\dom\phi\subset\cl C$ and $\dom\partial\phi = C$; (ii) $\phi$ is $\C^1$ on $\interior\dom\phi\equiv C$.
We denote the class of kernel generating distances associated with $C$ by $\G(C)$.
\end{definition}
\begin{definition}[Bregman Distance~\citep{Bregman1967-rf}]\label{def:bregman}
Given $\phi\in\G(C)$, a \emph{Bregman distance} $D_\phi:\dom\phi\times\interior\dom\phi\to\R_+$ associated with $\phi$ is defined by $D_\phi(x, y) := \phi(x) - \phi(y) - \langle\nabla\phi(y), x - y\rangle$.
\end{definition}
The Bregman distance $D_\phi(x, y)$ measures the proximity between $x\in\dom\phi$ and $y\in\interior\dom\phi$.
Since $\phi$ is convex, $D_\phi(x, y) \geq 0$ holds.
Moreover, when $\phi$ is strictly convex, $D_\phi(x,y) = 0$ holds if and only if $x = y$.
However, the Bregman distance is not always symmetric and does not have to satisfy the triangle inequality.
See Example~\ref{example:bregman}.  We use $D_f$ in Definition~\ref{def:bregman} for nonconvex $f$ instead of convex $\phi$ for brevity.
In the remainder of the paper, we  assume that, for a given strictly convex $\phi$ and $C$, there exists $\nu > 0$ such that the following holds for all $x, y \in \interior\dom\phi$ and $\gamma \in [0,1]$ (see Lemma~\ref{lemma:bregman-nu}):
\begin{align}
D_\phi((1-\gamma)x + \gamma y, x) \leq \gamma^{1+\nu}D_\phi(y, x).
\label{ineq:bregman-nu0}
\end{align}

We recall the smooth adaptable property, which is a generalization of $L$-smoothness and was introduced by~\cite{Bauschke2017-hg,Bolte2018-zt,Lu2018-ii}.
Its characterizations can be found in Appendix~\ref{subsec:properties-of-smooth-adaptable}, Example~\ref{example:l-smad}, Remark~\ref{remark:how-to-choose-kernel}, and \cite[Proposition 1]{Bauschke2017-hg}.

\begin{definition}[$L$-smooth Adaptable Property]\label{def:lsmad}
Let $\phi\in\G(C)$ and let $f:\R^n\to(-\infty,+\infty]$ be proper and lower semicontinuous with $\dom\phi\subset\dom f$, which is $\C^1$ on $C \equiv \interior\dom\phi$.
The pair of functions $(f,\phi)$ is said to be $L$-\emph{smooth adaptable} (for short: $L$-\emph{smad}) on $C$ if there exists $L > 0$ such that $L\phi - f$ and $L\phi + f$ are convex on $C$.
\end{definition}
The $L$-smad property is equivalent to the extended descent lemma~\cite[Lemma 2.1]{Bolte2018-zt}, which implies that the $L$-smad property for $(f,\phi)$ provides upper and lower approximations for $f$ majorized by $\phi$ with $L > 0$. For example, $-\log x$ is not $L$-smooth on $x > 0$ and $\frac{1}{4}x^4$ is not $L$-smooth on $\R$. It is the Bregman distance, rather than the Euclidean distance, that allows us to bound the first-order approximation of these functions. See Example~\ref{example:l-smad} for further examples of $L$-smad pairs.
When $(f, \phi)$ is $L$-smad with a strictly convex function $\phi\in\G(C)$, we have Lemma~\ref{lemma:primal-progress}, \ie,
\begin{align}
    f(x) - f(x_+) \geq \gamma\langle\nabla f(x), x - v\rangle - L\gamma^{1+\nu}D_\phi(v,x), \label{ineq:primal-progress}
\end{align}
where $x_+ = (1 - \gamma)x + \gamma v$ with $x, v \in \interior\dom\phi$ and $\gamma\in[0,1]$.
Often $v \in P \subset\interior\dom\phi$ in Lemma~\ref{lemma:primal-progress} is chosen as a \emph{Frank--Wolfe vertex}, \ie, $v \in \argmax_{u \in P}\langle\nabla f(x), x - u\rangle$, but other choices, \eg, those arising from away-directions, are also possible, as we will see in Section~\ref{sec:away-step-fw}.

In \cite{Garber2015-de} (see also \cite{Garber2020-tr}) the linear convergence of the FW algorithm for convex optimization under the quadratic growth condition, which is a weaker assumption than assuming strong convexity, was established over strongly convex sets and later generalized to uniformly convex sets in \cite{Kerdreux2021-kc} as well as to conditions weaker than quadratic growth in \cite{Kerdreux2019-ec,KDP2018jour}. Local variants of these notions, as necessary, \eg, for the nonconvex case, have been studied in \cite{KAP2021} in the context of FW algorithms. For $f:\R^n\to[-\infty,+\infty]$, let $[f \leq \zeta] := \Set{x \in \R^n}{f(x)\leq \zeta}$ be a $\zeta$-sublevel set of $f$ for some $\zeta\in\R$.

\begin{definition}[H\"{o}lder Error Bound~{\citep{Braun2025-sz,alex17sharp}} and Quadratic Growth Conditions~\citep{Garber2015-de,Garber2020-tr,Liao2024-vd}]\label{def:heb}
    Let $f:\R^n \to (-\infty,+\infty]$ be a proper lower semicontinuous function and $P\subset\R^n$ be a compact convex set.
    Let $\X^*\neq\emptyset$ be the set of optimal solutions, \ie, $\X^* := \argmin_{x \in P}f(x)$, and let $f^* = \min_{x\in P} f(x)$ and $\zeta > 0$.
    The function $f$ satisfies the $q$-\emph{H\"{o}lder error bound} (HEB) condition on $P$ if there exist constants $q \geq 1$ and $\mu > 0$ such that $\dist(x, \X^*)^q \leq \frac{q}{\mu}(f(x) - f^*)$ holds for any $x \in P$.
    In particular, when $q = 2$, it is called the $\mu$-\emph{quadratic growth} condition.
    Moreover, the function $f$ is said to satisfy \emph{local $\mu$-quadratic growth} (with $\zeta$) if there exists a constant $\mu > 0$ such that $\dist(x, \X^*)^2\leq \frac{2}{\mu}(f(x) - f^*)$ holds for any $x \in [f \leq f^* + \zeta]\cap P$.
\end{definition}

For example, $f(x) = \log (1 + x^2)$ is nonconvex but satisfies local quadratic growth. This function is used for image restoration~\citep{Bot2016-jq,Stella2017-vi}. See, for more examples, Examples~\ref{example:weakly-convex-quadratic-growth} and~\ref{example:weakly-convex-quadratic-growth-rho-mu}.
The HEB condition shows sharpness bounds on the primal gap (see~\cite{bolte17,alex17sharp}), which has been extensively analyzed for FW algorithms~\citep{Kerdreux2019-ec,KDP2018jour}. Convergence of (sub)gradient algorithms for nonconvex optimization under the quadratic growth or HEB condition was established in \cite{Davis2018-bb,Davis2024-rm,Davis2024-og}. We assume the HEB condition for convex optimization and the local quadratic growth for nonconvex optimization.

\section{Proposed Bregman FW Algorithms}
Throughout this paper, we make the following assumptions.
\begin{assumption}\label{assumption:setting}
\begin{inparaenum}[(i)]
    \item $\phi \in \G(C)$ with $\cl{C} = \cl{\dom\phi}$ is strictly convex on $C \equiv\interior\dom\phi$ and has $\nu > 0$ satisfying~\Eqref{ineq:bregman-nu0};\label{assumption:phi}
    \item $f:\R^n \to (-\infty, +\infty]$ is proper and lower semicontinuous with $\dom\phi \subset\dom f$, which is $\C^1$ on $C$;\label{assumption:f}
    \item The pair $(f, \phi)$ is $L$-smad on $P$;\label{assumption:smad}
    \item $P \subset \R^n$ is a nonempty compact convex set with $P \subset C$.\label{assumption:P}
\end{inparaenum}
\end{assumption}
\begin{wrapfigure}[7]{R}{0.414\textwidth}
\vspace{-1.629\baselineskip}
\begin{algorithm}[H]
    \caption{Frank--Wolfe algorithm}
    \label{alg:fw}
    \Input{Initial point $x_0 \in P$, objective \mbox{function $f$, step-size $\gamma_t \in [0,1]$}}

    \For{$t = 0, \ldots$}{
    $v_t \leftarrow \argmin_{v \in P}\langle\nabla f(x_t), v\rangle$ \label{line:lmo}

    $x_{t+1} \leftarrow (1-\gamma_t)x_t + \gamma_t v_t$ \label{line:fw-step}
    }
\end{algorithm}
\end{wrapfigure}

Assumption~\ref{assumption:setting}(\ref{assumption:phi})-(\ref{assumption:smad}) are standard for Bregman-type algorithms~\citep{Bolte2018-zt,Takahashi2024-wz}.
The strict convexity of $\phi$ ensures the existence of $\nu$ by Lemma~\ref{lemma:bregman-nu} and is satisfied by many kernel generating distances (see, \eg, \cite{Bauschke2017-hg,Bauschke1997-vk}, and \cite[Table 2.1]{Dhillon2008-zz}).
Assumption~\ref{assumption:setting}(\ref{assumption:P}) is standard for FW algorithms, and $P \subset C$ naturally ensures that $D_\phi$ is well-defined on $P$.
The original FW algorithm is given in Algorithm~\ref{alg:fw}.
In what follows, let $x^*\in \argmin_{x \in P} f(x)$ and $f^* = f(x^*)$. 

\subsection{Bregman Step-Size Strategy for \texorpdfstring{$\gamma_t$}{γt}}
\paragraph{Bregman Short Step-Size:}

Assume that $(f, \phi)$ is $L$-smad. Maximizing the right-hand side of~\Eqref{ineq:primal-progress} in terms of $\gamma$, we find
\begin{align*}
    \gamma = \left(\frac{\langle\nabla f(x), x - v\rangle}{L(1+\nu)D_\phi(v,x)}\right)^{\frac{1}{\nu}}\quad \text{and} \quad f(x) - f(x_{+})\geq \frac{\nu}{1+\nu}\frac{\langle\nabla f(x), x - v\rangle^{1+1/\nu}}{(L(1+\nu)D_\phi(v,x))^{1/\nu}}.
\end{align*}
Theoretically, when $x, v\in P$ and $\gamma \in [0,1]$, the Frank--Wolfe step $x_{+} = (1 - \gamma)x + \gamma v \in P$ is well-defined.
We update $\gamma = \min\left\{\left(\frac{\langle\nabla f(x),x - v\rangle}{L(1+\nu)D_\phi(v,x)}\right)^{1/\nu}, \gamma_{\max}\right\}$ with some $\gamma_{\max}\in\R$ (usually set $\gamma_{\max} = 1$).
This step-size strategy provides $f(x) - f(x_+) \geq \frac{\nu}{1 + \nu}\gamma\langle\nabla f(x),x - v\rangle$ for $0 \leq \gamma \leq \left(\frac{\langle\nabla f(x),x - v\rangle}{L(1+\nu)D_\phi(v,x)}\right)^{1/\nu}$. Its proof can be found in Lemma~\ref{lemma:progress-bregman}.

\begin{wrapfigure}[12]{R}{0.51\textwidth}
\vspace*{-1\baselineskip}
\begin{algorithm}[H]
    \caption{\mbox{Adaptive Bregman step-size strategy}}
    \DontPrintSemicolon
    \label{alg:adaptive-bregman}
    \Output{\mbox{Estimates $\tilde{L}^*$ and $\tilde{\nu}^*$, step-size $\gamma$}}
    \Procedure{$\StepSize(f, \phi, x, v,\tilde{L},\gamma_{\max})$}{
    Choose $\beta \in(0,1)$, $\eta \in (0,1]$, and $\tau > 1$\;
    $M \leftarrow \eta \tilde{L}$, $\kappa \leftarrow 1$\;
    \Loop{
        \mbox{$\gamma \leftarrow\min\left\{\left(\frac{\langle\nabla f(x),x - v\rangle}{M(1+\kappa)D_\phi(v,x)}\right)^{1/\kappa}, \gamma_{\max}\right\}$}\;\label{line:update-gamma}
        $x_{+} \leftarrow (1 - \gamma)x + \gamma v$\;
        \If{$D_f(x_+, x) \leq M \gamma^{1 + \kappa}D_\phi(v,x)$\label{line:adaptive-linesearch}}{
        $\tilde{L}^* \leftarrow M$, $\tilde{\nu}^* \leftarrow \kappa$\;
            \Return $\tilde{L}^*$, $\tilde{\nu}^*$, $\gamma$\;
        }
        $M\leftarrow \tau M$, $\kappa\leftarrow \beta \kappa$\; \label{line:update-mu}
    }
    }
\end{algorithm}
\end{wrapfigure}
If $\phi = \frac{1}{2}\|\cdot\|^2$, we have $\nu = 1$ and $\gamma_t = \frac{\langle\nabla f(x_t), x_t - v_t\rangle}{L\|v_t - x_t\|^2}$, which is often called the (Euclidean) short step-size.
Although the short step-size does not require line searches, it requires knowledge of the value of $L$.

\paragraph{Adaptive Bregman Step-Size:}
The exact value or the tight upper bound of $L$ is often unknown; however, the algorithm's performance heavily depends on it; an underestimation of $L$ might lead to non-convergence, and an overestimation of $L$ might lead to slow convergence. Moreover, a worst-case $L$ might be too conservative for regimes where the function is better behaved. For these reasons,~\cite{Pedregosa2020-ay} proposed an adaptive step-size strategy for FW algorithms in the case where $\phi = \frac{1}{2}\|\cdot\|^2$. 
We present our algorithm in Algorithm~\ref{alg:adaptive-bregman}.
This step-size strategy can be used as a drop-in replacement. For example, inserting $L_t, \nu_t, \gamma_t \leftarrow \StepSize(f, \phi, x_t, v_t, L_{t-1}, 1)$ between lines~\ref{line:lmo} and~\ref{line:fw-step} in Algorithm~\ref{alg:fw}, we obtain that 
Algorithm~\ref{alg:adaptive-bregman} searches $L$ and $\nu$ satisfying~\Eqref{ineq:primal-progress}. For $\phi = \frac{1}{2}\|\cdot\|^2$, Algorithm~\ref{alg:adaptive-bregman} corresponds to a (Euclidean) adaptive step-size strategy~\citep{Pedregosa2020-ay}.

\begin{remark}[Well-definedness and termination of Algorithm~\ref{alg:adaptive-bregman}]\label{remark:well-defined-linesearch}
By the $L$-smad property of $(f, \phi)$ and Lemma~\ref{lemma:primal-progress}, we know that~\Eqref{ineq:primal-progress} holds for all $M \geq L$ and $\nu \geq \kappa > 0$.
Therefore, the condition in line~\ref{line:adaptive-linesearch} in Algorithm~\ref{alg:adaptive-bregman} is well-defined and guaranteed to terminate. It suffices to update $\kappa$ in line~\ref{line:update-mu} only if~\Eqref{ineq:bregman-nu0} does not hold. Thus, $\kappa$ cannot become too small.
\end{remark}
We have a bound on the total number of evaluations of Algorithm~\ref{alg:adaptive-bregman}. Its proof can be found in Appendix~\ref{subsec:proof-complexity-line-search}.
\begin{theorem}\label{theorem:complexity-line-search}
    Let $L_{-1}$ be the initial $L$-smad estimate and $n_t$ be the total number of evaluations of~\Eqref{ineq:primal-progress} up to iteration $t$. Then we have $n_t \leq \max\{(1 - \log\eta/\log\tau)(t+1) + \max\{\log(\tau L/L_{-1}), 0\}/\log\tau, (1 + \log\nu/\log\beta)(t+1)\}$. 
\end{theorem}
\cite{Pedregosa2020-ay} showed that, asymptotically, no more than 16\% of the iterations require more than one evaluation of the line search procedures when $\eta = 0.9$ and $\tau = 2$. This follows from the bound $(1 - \log\eta/\log\tau) \leq 1.16$.

\subsection{Bregman Away-Step Frank--Wolfe Algorithm}
\label{sec:away-step-fw}
In the case where $P$ is a polytope, the classical FW algorithm might zigzag when approaching the optimal face and, in consequence, converge slowly. The \emph{away-step FW algorithm} overcomes this drawback.
\emph{Away steps} allow the algorithm to move away from vertices in a convex combination of $x_t$, which can effectively short-circuit the zigzagging. The convergence properties of this algorithm were unknown for a long time, and it was only quite recently that \cite{Lacoste-Julien2015-gb} established the linear convergence of the away-step FW algorithm. Inspired by~\cite{GueLat1986-bg,Lacoste-Julien2015-gb,Wolfe1970-ax}, we propose a variant of the away-step FW algorithm utilizing Bregman distances as given in Algorithm~\ref{alg:afw}. The main difference between Algorithm~\ref{alg:afw} and the existing away-step FW algorithm lies in the update in line~\ref{line:afw-nu-update}. 

For the following discussion, we introduce some notions. In the same way as~\cite{Braun2025-sz}, we define an active set $\Scal\subset\vertex P$, where $\vertex P$ denotes the set of vertices of $P$, and the away vertex as $v_t^{\away} \in \argmax_{v \in \Scal}\langle\nabla f(x_t), v\rangle$, where $x_t$ is a (strict) convex combination of elements in $\Scal$, \ie, $x_t = \sum_{v \in \Scal} \lambda_v v$ with $\lambda_v > 0$ for all $v\in\Scal$ and $\sum_{v\in\Scal}\lambda_v = 1$.
If $\nu$ is unknown, we can add line search procedures to search $\nu$ until~\Eqref{ineq:bregman-nu0} holds.
If $\nu$ and $L$ are unknown, we can use $L_t, \nu_t, \gamma_t \leftarrow \StepSize(f, \phi, x_t, v_t, L_{t-1}, \gamma_{t,\max})$ with $\gamma_t \leftarrow \min\left\{\left(\frac{\langle\nabla f(x_t),d_t\rangle}{M(1 + \kappa)D_\phi(v_t,x_t)}\right)^{1/\kappa}, \gamma_{t,\max}\right\}$ instead of line~\ref{line:update-gamma} in Algorithm~\ref{alg:adaptive-bregman}. When $\phi = \frac{1}{2}\|\cdot\|^2$ and $\nu \equiv 1$, Algorithm~\ref{alg:afw} corresponds to the Euclidean away-step FW algorithm.

\begin{algorithm}[!tp]
    \caption{Away-step Frank--Wolfe algorithm with the Bregman distance}
    \DontPrintSemicolon
    \label{alg:afw}
    \Input{$x_0\in\argmin_{v\in P}\langle\nabla f(x),v\rangle$ for $x \in P$, $\beta < 1$}
    $\Scal_0 \leftarrow \{x_0\}$, $\lambda_{x_0, 0} \leftarrow 1$\;

    \For{$t = 0, \ldots$}{
        $v_t^{\FW}\leftarrow\argmin_{v\in P}\langle\nabla f(x_t), v\rangle$,
        $v_t^{\away}\leftarrow\argmax_{v\in \Scal_t}\langle\nabla f(x_t), v\rangle$\;

        \If{$\langle\nabla f(x_t), x_t - v_t^{\FW}\rangle\geq\langle \nabla f(x_t), v_t^{\away} - x_t\rangle$\label{alg:fw-step}}{
            $v_t \leftarrow v_t^{\FW}$, $d_t \leftarrow x_t - v_t^{\FW}$, $\gamma_{t,\max} \leftarrow 1$\;
        }\Else{
            $v_t \leftarrow v_t^{\away}$, $d_t \leftarrow v_t^{\away} - x_t$, $\gamma_{t,\max} \leftarrow\lambda_{v_t^{\away}, t}/(1 - \lambda_{v_t^{\away}, t})$\; \label{alg:away-step}
        }
        $\gamma_t \leftarrow \min\left\{\left(\frac{\langle\nabla f(x_t),d_t\rangle}{L(1 + \nu)D_\phi(v_t,x_t)}\right)^{1/\nu}, \gamma_{t,\max}\right\}$, \label{line:afw-nu-update} 
        $x_{t+1} \leftarrow x_t - \gamma_t d_t$\;

        \If{$\langle\nabla f(x_t), x_t - v_t^{\FW}\rangle\geq\langle \nabla f(x_t), v_t^{\away} - x_t\rangle$}{
            $\lambda_{v, t+1} \leftarrow (1-\gamma_t)\lambda_{v,t}$ for all $v_t\in\Scal_t \setminus \{v_t^{\FW}\}$\;
            $\lambda_{v_t^{\FW}, t+1} \leftarrow\begin{cases}
                \gamma_t & \text{if}\ v_t^{\FW}\notin\Scal_t\\
                (1-\gamma_t)\lambda_{v_t^{\FW}, t} + \gamma_t & \text{if}\ v_t^{\FW}\in\Scal_t
            \end{cases}$, $\Scal_{t+1} \leftarrow\begin{cases}
                \Scal_t\cup\{v_t^{\FW}\} & \text{if}\ \gamma_t < 1\\
                \{v_t^{\FW}\} & \text{if}\ \gamma_t = 1
            \end{cases}$\;\label{line:add-active-set}
        }\Else{
            $\lambda_{v, t+1} \leftarrow (1+\gamma_t)\lambda_{v,t}$ for all $v\in\Scal_t \setminus \{v_t^{\away}\}$, $\lambda_{v_t^{\away}, t+1} \leftarrow(1+\gamma_t)\lambda_{v_t^{\away}, t} - \gamma_t$\;
            $\Scal_{t+1} \leftarrow \begin{cases}
                \Scal_t\setminus\{v_t^{\away}\} & \text{if}\ \lambda_{v_t^{\away}, t+1} = 0\\
                \Scal_t & \text{if}\ \lambda_{v_t^{\away}, t+1} > 0
            \end{cases}$\;\label{line:remove-away-vertex}
        }
    }
\end{algorithm}

\section{Linear Convergence for Convex Optimization}
In this section, we assume that $f$ is convex.
\begin{assumption}\label{assumption:convex}
    The objective function $f$ is convex.
\end{assumption}

Let $D := \sqrt{\sup_{x, y\in P}D_\phi(x,y)}$ be the diameter of $P$. 
We will now establish faster convergence rates than $O(1/t)$ up to linear convergence depending on $\nu$ in~\Eqref{ineq:bregman-nu} and $q$ in Definition~\ref{def:heb}. First, we establish the faster convergence of Algorithm~\ref{alg:fw} with the Bregman short step-size, \ie, $\gamma_t = \min\left\{\left(\frac{\langle\nabla f(x_t), x_t - v_t\rangle}{L(1+\nu)D_\phi(v_t, x_t)}\right)^{1/\nu}, 1\right\}$, or Algorithm~\ref{alg:adaptive-bregman} in the case where the optimal solution lies in the relative interior. Its proof can be found in Appendix~\ref{subsec:proof-theorem:convergence-rate-inner-optima}.

\begin{theorem}[Linear convergence  of FW algorithm with short step-size or adaptive step-size]\label{theorem:convergence-rate-inner-optima}
    Suppose that Assumptions~\ref{assumption:setting} and~\ref{assumption:convex} hold.
    Let $\nu > 0$ and let $f$ satisfy the HEB condition with $q \geq 1 + \nu$ and $\mu > 0$ and $D$ be the diameter of $P$.
    Assume that there exists a minimizer $x^* \in \interior P$, \ie, there exists an $r > 0$ with $B(x^*, r)\subset P$.
    Consider the iterates of Algorithm~\ref{alg:fw} with short step-size  $\gamma_t = \min\left\{\left(\frac{\langle\nabla f(x_t), x_t - v_t\rangle}{L(1+\nu)D_\phi(v_t, x_t)}\right)^{1/\nu}, 1\right\}$ or the adaptive Bregman step-size strategy (Algorithm~\ref{alg:adaptive-bregman}).
    Then, it holds that, for all $t \geq 1$, 
    \begin{align*}
        f(x_t) - f^* \leq \begin{cases}
            \max\left\{\frac{1}{1 + \nu}, 1 - \frac{\nu}{1 + \nu}\frac{r^{1+1/\nu}}{c^{1+1/\nu}D^{2/\nu}}\right\}^{t-1}LD^2 & \text{if}\ q = 1 + \nu,\\
            \frac{LD^2}{(1 + \nu)^{t-1}} & \text{if}\ 1 \leq t \leq t_0,q > 1 + \nu,\\
            \frac{(L(1+\nu)(c/r)^{1+\nu}D^2)^{q/(q - 1 - \nu)}}{\left(1 + \frac{1-\nu}{2(1 + \nu)}(t-t_0)\right)^{\nu q/(q - 1 - \nu)}} = \O(1/t^{\nu q/(q - 1 - \nu)}) & \text{if}\ t \geq t_0, q > 1 + \nu,
        \end{cases}
    \end{align*}
    where $c = (q /\mu)^{1/q}$ and $t_0 := \max\{\lfloor\log_{1/(1+\nu)}((L(1+\nu(c/r)^{1+\nu}D^2)^{q/(q- 1 - \nu)}/LD^2)\rfloor + 2, 1\}$.
\end{theorem}

Next, we establish the linear convergence of Algorithm~\ref{alg:afw}. Recall that the pyramidal width is the minimal $\delta > 0$ satisfying Lemma~\ref{lemma:scaling-inequality-pyramidal} (see \cite[Lemma 2.26]{Braun2025-sz} or~\cite[Theorem 3]{Lacoste-Julien2015-gb} for an in-depth discussion). Under Assumption~\ref{assumption:phi-strongly-conv}, it holds that $\delta \leq D$, and this will be important in establishing convergence rates of Algorithm~\ref{alg:afw}. We make the following assumption:

\begin{assumption}\label{assumption:phi-strongly-conv}
    The kernel generating distance $\phi$ is $\sigma$-strongly convex.
\end{assumption}

We have the linear convergence of Algorithm~\ref{alg:afw}. Its proof can be found in Appendix~\ref{subsec:proof-theorem:convergence-rate-away-step-convex}.

\begin{theorem}[Linear convergence of the away-step FW algorithm]\label{theorem:convergence-rate-away-step-convex}
    Suppose that Assumptions~\ref{assumption:setting},~\ref{assumption:convex}, and~\ref{assumption:phi-strongly-conv} hold.
    Let $\nu > 0$ and let $P$ be a polytope and $f$ satisfy the HEB condition with $q > 1 + \nu$ or $(\nu, q) = (1,2)$.
    The convergence rate of Algorithm~\ref{alg:afw} is linear:
    for all $t\geq1$
    \begin{align*}
        f(x_t) - f^* \leq\begin{cases}
            \left(1 - \frac{\mu}{32L}\frac{\delta^2}{D^2}\right)^{\lceil(t-1)/2\rceil}LD^2 & \text{if}\ (\nu,q) = (1,2),\\
            \frac{1}{(1+ \nu)^{\lceil(t-1)/2\rceil}}LD^2& \text{if}\ 1 \leq t \leq t_0, q > 1 + \nu,\\
            \frac{(L(1+\nu)D^2c/(\delta/2)^{1 + \nu})^{q/(q - 1 - \nu)}}{\left(1 + \frac{1 - \nu}{2(1+\nu)}\lceil(t-t_0)/2\rceil\right)^{\nu q/(q - 1 - \nu)}} = \mathcal{O}(1/t^{\nu q/(q - 1 - \nu)})& \text{if}\ t \geq t_0,q > 1 + \nu,
        \end{cases}
    \end{align*}
    where $c:=(q/\mu)^{1/q}$ $D$ and $\delta$ are the diameter and the pyramidal width of the polytope $P$, respectively, and $t_0 := \max\{\lfloor\log_{1/(1+\nu)}(L(1+\nu)D^2/((\mu/q)^{1/q}\delta/2)^{1 + \nu})^{q/(q - 1 - \nu)}/LD^2\rfloor + 2, 1\}$.
\end{theorem}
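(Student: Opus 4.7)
The plan is to mirror the analysis of Theorem~\ref{theorem:convergence-rate-inner-optima}, replacing the interior-ball scaling (which required $B(x^*, r)\subset P$) with the pyramidal-width scaling of Lemma~\ref{lemma:scaling-inequality-pyramidal}, and incorporating the drop-step accounting specific to the away-step variant. I classify iterations as \emph{good steps} ($\gamma_t < \gamma_{t,\max}$, so Lemma~\ref{lemma:progress-bregman} applies at full strength) and \emph{drop steps} ($\gamma_t = \gamma_{t,\max}$ on an away direction, so $v_t^{\away}$ is deleted from $\Scal_{t+1}$). The standard active-set argument — each drop step shrinks $|\Scal_t|$ by one, each non-drop step grows it by at most one, and $|\Scal_0| = 1$ — yields at least $\lceil t/2\rceil$ good steps among the first $t$, producing the $\lceil (t-1)/2\rceil$ exponent in all three branches of the theorem.

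On a good step the search direction satisfies $\langle\nabla f(x_t), d_t\rangle \geq \tfrac{1}{2}\langle\nabla f(x_t), v_t^{\away} - v_t^{\FW}\rangle$ because $d_t$ is the larger of the two by construction. Applying Lemma~\ref{lemma:scaling-inequality-pyramidal} to the pairwise gap, then Lemma~\ref{lemma:primal-gap-Frank-Wolfe-gap} together with HEB $\|x_t - x^*\| \leq c\, h_t^{1/q}$ (where $h_t := f(x_t) - f^*$ and $c = (q/\mu)^{1/q}$), gives
\begin{align*}
\langle\nabla f(x_t), d_t\rangle \;\geq\; \frac{\delta}{2c}\, h_t^{\,1 - 1/q}.
\end{align*}
Combined with Lemma~\ref{lemma:progress-bregman} and the bound $D_\phi(v_t, x_t) \leq D^2$, and splitting on whether the Bregman short step is capped at $1$, this produces a per-good-step recursion of the form
\begin{align*}
h_t - h_{t+1} \;\geq\; \min\!\left\{\,\tfrac{\nu}{1+\nu}\, h_t,\ \ A\, h_t^{\,(1-1/q)(1+1/\nu)}\,\right\},
\end{align*}
where $A$ aggregates $\delta, c, L, D, \nu$.

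When $(\nu, q) = (1, 2)$ the polynomial exponent collapses to $1$, both branches coincide, and the recursion is a contraction $h_{t+1} \leq (1 - A) h_t$; tracking constants recovers $A = \mu\delta^2/(32 L D^2)$, and iterating over $\lceil (t-1)/2\rceil$ good steps yields the first branch. When $q > 1 + \nu$, so that $\alpha := (1-1/q)(1+1/\nu) > 1$, the linear branch dominates as long as the uncapped short step would exceed $1$; this regime yields geometric $(1+\nu)^{-1}$ shrinkage per good step, and $t_0$ is precisely the smallest index where the threshold is crossed, obtained by inverting $h_t \leq LD^2/(1+\nu)^t$ at that threshold. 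For $t \geq t_0$ the polynomial branch takes over, and the standard reciprocal-power telescoping — namely $1/h_{t+1}^{\alpha - 1} - 1/h_t^{\alpha - 1} \geq (\alpha - 1) A$ — gives $h_{t_0 + k} = \mathcal{O}(k^{-1/(\alpha - 1)}) = \mathcal{O}(k^{-\nu q/(q - 1 - \nu)})$ with the displayed constants, the $\lceil (t - t_0)/2\rceil$ coming again from the good-step accounting.

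\textbf{Main obstacle.} The delicate point is verifying that the drop-step accounting composes cleanly with the two-regime argument: one needs that drop steps are still monotone in $f$ (so they cannot undo progress) in order to justify the $\lceil (t - t_0)/2\rceil$ effective good-step count in the sub-linear branch, and that the transition at $t_0$ is genuinely attained by a good step rather than being masked by a burst of drop steps. Beyond that, matching the explicit constants — the $(\delta/2)^{1+\nu}$ inside the threshold defining $t_0$ and the exponent $\nu q/(q - 1 - \nu)$ — is careful bookkeeping but routine. Note that the pyramidal-width inequality is stated in the Euclidean norm, but this causes no friction: it only enters as a lower bound on the pairwise inner product $\langle\nabla f(x_t), v_t^{\away} - v_t^{\FW}\rangle$, while the Bregman geometry enters only through the upper bound $D_\phi(v_t, x_t) \leq D^2$, so the two geometries stay cleanly separated throughout the argument.
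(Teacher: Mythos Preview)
Your proposal is correct and follows essentially the same route as the paper: the paper packages the chain ``pyramidal width $\Rightarrow$ scaled dual gap $\Rightarrow$ HEB'' into Lemma~\ref{lem:geoSC} and then invokes Lemma~\ref{lemma:contraction-convergence-rate} for the two-regime recursion, whereas you assemble the same ingredients by hand and telescope directly, arriving at the same contraction $h_t - h_{t+1} \geq \min\bigl\{\tfrac{\nu}{1+\nu}h_t,\ A\,h_t^{(1+\nu)(q-1)/(\nu q)}\bigr\}$ on non-drop steps and the same drop-step accounting. One terminological slip worth tightening: your dichotomy ``good ($\gamma_t < \gamma_{t,\max}$) vs.\ drop'' formally omits FW steps with $\gamma_t = \gamma_{t,\max} = 1$, but you handle them correctly two paragraphs later (``splitting on whether the Bregman short step is capped at~$1$''), where they feed the $\tfrac{\nu}{1+\nu}h_t$ branch via $\langle\nabla f(x_t), d_t\rangle \geq h_t$ from convexity---exactly as in the paper.
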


When $\phi = \frac{1}{2}\|\cdot\|^2$, \ie, $\nu = 1$, Theorems~\ref{theorem:convergence-rate-inner-optima} and~\ref{theorem:convergence-rate-away-step-convex} correspond to existing results~\citep{Braun2025-sz,KDP2018jour}.

\section{Local Linear Convergence for Nonconvex Optimization}
In this section, we consider a nonconvex objective function.
We establish global sublinear, local sublinear, and local linear convergence (see Theorem~\ref{theorem:sublinear-nonconvex} and Theorem~\ref{theorem:local-sublinear-convergence-weakly-convex} for global and local sublinear convergence, respectively).
We show that the FW algorithm converges to a minimizer $x^*$ when an initial point is close enough to $x^*$. 
We need weak convexity and local quadratic growth for local convergence.

\begin{assumption}\label{assumption:local-convergence}
$f$ is $\rho$-weakly convex and has local $\mu$-quadratic growth with $\zeta > 0$. 
\end{assumption}

Assumption~\ref{assumption:local-convergence} is not too restrictive because the (local) strong convexity or the (local) Polyak--\L{}ojasiewicz (PL) inequality implies the (local) quadratic growth condition for weakly convex functions~\citep[Theorem 3.1]{Liao2024-vd}. For more examples satisfying Assumption~\ref{assumption:local-convergence} and $\rho \leq \mu$, see Examples~\ref{example:weakly-convex-quadratic-growth} and~\ref{example:weakly-convex-quadratic-growth-rho-mu}.
\begin{remark}
    Linear convergence rates for nonconvex optimization problems are, to the best of our knowledge, the first such result for the Frank--Wolfe algorithm. Their proofs are technically challenging. Proposition~\ref{prop:scaling-cond-bregman-weak-convex} is new and key for establishing a linear rate. In the nonconvex case, one cannot derive primal-gap inequalities such as Lemma~\ref{lemma:primal-gap-Frank-Wolfe-gap}. We overcome this by restricting our function class to weakly convex functions and proving Lemma~\ref{lemma:primal-gap-Frank-Wolfe-gap-weakly}. There remains an obstacle that could be resolved by the assumption of the quadratic growth condition. Assumption~\ref{assumption:local-convergence} enables us to derive Lemma~\ref{lemma:HEB-primal-gap} and Proposition~\ref{prop:scaling-cond-bregman-weak-convex} (as well as Remark~\ref{remark:loja-growth-loja-ineq}), which are crucial for establishing linear convergence.
\end{remark}

We have local linear convergence, and its proof can be found in Appendix~\ref{subsec:proof-theorem:convergence-rate-inner-optima-weakly-convex}.

\begin{theorem}[Local linear convergence of FW algorithm with short step-size or adaptive step-size]\label{theorem:convergence-rate-inner-optima-weakly-convex}
    Suppose that Assumptions~\ref{assumption:setting} and~\ref{assumption:local-convergence} hold.
    Let $\nu \in (0,1]$ and let $D$ be the diameter.
    Assume that there exists a minimizer $x^* \in \interior P$, \ie, there exists an $r > 0$ with $B(x^*, r)\subset P$.
    Consider the iterates of Algorithm~\ref{alg:fw} with $\gamma_t = \min\left\{\left(\frac{\langle\nabla f(x_t), x_t - v_t\rangle}{L(1+\nu)D_\phi(v_t, x_t)}\right)^{1/\nu}, 1\right\}$ or the adaptive Bregman step-size strategy (Algorithm~\ref{alg:adaptive-bregman}).
    Then, if $\rho / \mu < 1$, it holds that, for all $t \geq 1$,
    \begin{align*}
        f(x_t) - f^* \leq \begin{cases}
            \max\left\{\frac{1}{2}\left(1 + \frac{\rho}{\mu}\right), 1 - \frac{r^{2}}{2Mc^{2}D^{2}}\right\}^{t-1}LD^2 & \text{if}\ \nu = 1,\\
            \left(\frac{1}{1+\nu}\left(1 + \frac{\nu\rho}{\mu}\right)\right)^{t-1}LD^2 & \text{if}\ 1 \leq t \leq t_0, \nu \in(0, 1),\\
            \frac{(L(1+\nu)(1-\rho/\mu)^{\nu}c^{1+\nu}D^{2}/ r^{1+\nu})^{2/(1-\nu)}}{\left(1 + \frac{1-\nu}{2(1 + \nu)}\left(1-\frac{\rho}{\mu}\right)(t-t_0)\right)^{2\nu/(1-\nu)}} = \mathcal{O}(1/t^{2\nu/(1-\nu)}) & \text{if}\ t \geq t_0, \nu \in(0,1),
        \end{cases}
    \end{align*}
    where $c = \sqrt{2\mu}/(\mu - \rho)$, $M := (L(1+\nu))^{1/\nu}$, and $t_0 := \max\{\lfloor\log_{\left(1 + \nu\rho/\mu\right)/(1+\nu)}((L(1+\nu)(1-\rho/\mu)^\nu c^{1+\nu}D^{2}/r^{1+\nu})^{2/(1-\nu)}/LD^2)\rfloor + 2, 1\}$.
\end{theorem}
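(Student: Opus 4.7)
The plan is to mirror the proof of Theorem~\ref{theorem:convergence-rate-inner-optima} (the convex inner-optimum case), replacing the convex gap inequality with Lemma~\ref{lemma:primal-gap-Frank-Wolfe-gap-weakly} and absorbing the weakly-convex slack $\frac{\rho}{2}\|x_t-x^*\|^2$ using local quadratic growth. Writing $h_t := f(x_t)-f^*$, and noting that monotonicity of $h_t$ (from Lemma~\ref{lemma:progress-bregman}) keeps the iterates in $[f\leq f^*+\zeta]\cap P$, I would first combine Lemma~\ref{lemma:primal-gap-Frank-Wolfe-gap-weakly} with the local quadratic growth bound $\|x_t-x^*\|^2\leq\frac{2}{\mu}h_t$ to obtain the weakly-convex sharpness inequality $(1-\rho/\mu)h_t\leq\langle \nabla f(x_t),x_t-x^*\rangle\leq\langle \nabla f(x_t),x_t-v_t\rangle$ (which is useful because $\rho<\mu$). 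Next, using the interior condition $B(x^*,r)\subset P$ by picking $u_t = x^* + r(x^*-x_t)/\|x_t-x^*\|\in P$, I would get $\langle \nabla f(x_t),x_t-v_t\rangle\geq(r/\|x_t-x^*\|)\langle \nabla f(x_t),x_t-x^*\rangle$, which combined with the sharpness inequality above and the quadratic growth bound $\|x_t-x^*\|\leq\sqrt{2h_t/\mu}$ yields the square-root lower bound $\langle \nabla f(x_t),x_t-v_t\rangle\geq(r/c)h_t^{1/2}$ with $c=\sqrt{2\mu}/(\mu-\rho)$, reproducing the constant in the statement. Thus the FW gap admits two complementary lower bounds, linear in $h_t$ and $\sqrt{h_t}$.

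Plugging these into Lemma~\ref{lemma:progress-bregman} and splitting on whether the step-size is clipped ($\gamma_t=1$) or interior ($\gamma_t<1$), I would use the linear lower bound in the clipped regime to get $h_{t+1}\leq\lambda h_t$ with $\lambda=(1+\nu\rho/\mu)/(1+\nu)$, and plug the $\sqrt{h_t}$ lower bound into the interior-regime progress $h_t-h_{t+1}\geq\frac{\nu}{1+\nu}\langle\nabla f(x_t),x_t-v_t\rangle^{1+1/\nu}/(L(1+\nu)D^2)^{1/\nu}$ to obtain $h_{t+1}\leq h_t - K h_t^{(1+\nu)/(2\nu)}$ with $K=\frac{\nu(r/c)^{1+1/\nu}}{(1+\nu)(L(1+\nu))^{1/\nu}D^{2/\nu}}$. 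For $\nu=1$ the exponent $(1+\nu)/(2\nu)=1$ makes the interior estimate also linear; taking the worse of the two contractions yields the first case of the statement with $M=2L=(L(1+\nu))^{1/\nu}$.

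For $\nu\in(0,1)$ the interior estimate is genuinely sublinear, so I would run a two-phase analysis at the threshold $h^*:=((1-\lambda)/K)^{2\nu/(1-\nu)}$ where the two estimates cross. While $h_t\geq h^*$, the interior-regime bound dominates the linear one, so $h_{t+1}\leq\lambda h_t$ holds in either regime; iterating from the starting estimate $h_1\leq LD^2$ (a standard consequence of the $L$-smad descent inequality on one FW step) gives the linear second case, and the transition time $t_0$ arises as the first $t$ with $\lambda^{t-1}LD^2\leq h^*$, reproducing the stated logarithmic formula. For $t\geq t_0$ we have $h_t\leq h^*$, and applying the standard recurrence lemma to $h_{t+1}\leq h_t - Kh_t^{1+\beta}$ with $\beta=(1-\nu)/(2\nu)$, via $h_{t+1}^{-\beta}\geq h_t^{-\beta}+K\beta$ from Bernoulli's inequality, yields $h_t\leq h^*\bigl(1+K(h^*)^\beta\beta(t-t_0)\bigr)^{-1/\beta}$. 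Substituting $K(h^*)^\beta=1-\lambda=\nu(1-\rho/\mu)/(1+\nu)$ and $\beta=(1-\nu)/(2\nu)$ collapses the coefficient to $\frac{(1-\nu)(1-\rho/\mu)}{2(1+\nu)}$ and recovers the third case. The adaptive Bregman variant follows without change, since Remark~\ref{remark:well-defined-linesearch} guarantees the line search returns estimates $\tilde L^*\geq L$ and $\tilde\nu^*\leq\nu$, preserving the form of every bound above.

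The main obstacle is juggling the two FW-gap lower bounds in the right regimes (linear for the clipped step-size, $\sqrt{h_t}$ for the interior step) and verifying that the threshold $h^*$, linear-phase rate $\lambda$, and sublinear-phase coefficient $\frac{(1-\nu)(1-\rho/\mu)}{2(1+\nu)}$ all align with the formulas in the statement. This is mostly careful bookkeeping and parallels the convex proof of Theorem~\ref{theorem:convergence-rate-inner-optima} at $q=2$, with $c$ rescaled by $(1-\rho/\mu)^{-1}$ to absorb the weak-convexity slack.
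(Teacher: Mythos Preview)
Your proposal is correct and follows essentially the same approach as the paper's proof. The only cosmetic difference is how you obtain the square-root lower bound $\langle\nabla f(x_t),x_t-v_t\rangle\geq (r/c)h_t^{1/2}$: you argue directly via the test point $u_t=x^*+r(x^*-x_t)/\|x_t-x^*\|$ together with the sharpness inequality and quadratic growth, whereas the paper routes through $\langle\nabla f(x_t),x_t-v_t\rangle\geq r\|\nabla f(x_t)\|$ (Proposition~\ref{prop:scaling-cond-bregman-weak-convex}) and then the local PL inequality $\|\nabla f(x_t)\|\geq h_t^{1/2}/c$ (Remark~\ref{remark:quad-growth-pl-ineq}); both yield the identical constant, and for the two-phase analysis the paper simply invokes Lemma~\ref{lemma:contraction-convergence-rate} rather than reproving it by hand as you do.
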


Finally, we establish the local linear convergence of Algorithm~\ref{alg:afw}. Its proof can be found in Appendix~\ref{subsec:proof-theorem:convergence-rate-away-step-weakly-convex}.
We assume $\rho \leq L$, which is not restrictive (see Appendix~\ref{subsec:proof-local-sublinear-convergence-weakly-convex} and Example~\ref{example:weakly-convex-quadratic-growth}).

\begin{theorem}[Local linear convergence by the away-step FW algorithm]\label{theorem:convergence-rate-away-step-weakly-convex}
    Suppose that Assumptions~\ref{assumption:setting},~\ref{assumption:phi-strongly-conv}, and~\ref{assumption:local-convergence} hold.
    Let $\nu \in (0,1]$ and let $P\subset\R^n$ be a polytope.
    The convergence rate of Algorithm~\ref{alg:afw} with $f$ is linear:
    if $\rho < \mu \leq L$, for all $t \geq 1$
    \begin{align*}
        f(x_t) - f^* \leq\begin{cases}
            2\left(1 - \frac{\omega}{4L}\frac{\delta^2}{D^2}\right)^{\lceil(t-1)/2\rceil}LD^2 & \text{if}\ \nu = 1,\\
            2\left(\frac{1}{1+ \nu}\left(1 + \frac{\nu\rho}{\mu}\right)\right)^{\lceil(t-1)/2\rceil}LD^2& \text{if}\ 1 \leq t \leq t_0, \nu \in (0, 1),\\
            \frac{(L(1+\nu)(1 - \rho/\mu)^{\nu}D^2/(\sqrt{\omega}\delta)^{1 + \nu})^{2/(1 - \nu)}}{\left(1 + \frac{1 - \nu}{2(1+\nu)}\left(1 - \frac{\rho}{\mu}\right)\lceil(t-t_0)/2\rceil\right)^{2\nu/(1 - \nu)}} = \O(1/t^{2\nu/(1 - \nu)})& \text{if}\ t \geq t_0,\nu \in (0, 1),
        \end{cases}
    \end{align*}
    where $D$ and $\delta$ are the diameter and the pyramidal width of $P$, respectively, and $\omega := (\mu - \rho)^2/8\mu$, and $t_0 := \max\{\lfloor\log_{(1 + \nu\rho/\mu)/(1+\nu)}(L^2(1+\nu)^2D^4(1 - \rho/\mu)^{2\nu}/(\omega\delta^2)^{1 + \nu})^{1/(1 - \nu)}/2LD^2\rfloor + 2, 1\}$.
\end{theorem}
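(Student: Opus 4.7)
The plan is to adapt the proof strategy of Theorem~\ref{theorem:convergence-rate-away-step-convex} to the weakly-convex setting by replacing the convex primal/dual gap inequality with its weak-convex counterpart from Lemma~\ref{lemma:primal-gap-Frank-Wolfe-gap-weakly}, and invoking local quadratic growth to relate the Frank--Wolfe gap to the primal gap. First I would partition the iterations into \emph{good steps} (Frank--Wolfe steps, or away steps with $\gamma_t<\gamma_{t,\max}$) and \emph{drop steps} (away steps with $\gamma_t=\gamma_{t,\max}$ that remove a vertex at line~\ref{line:remove-away-vertex}). The classical Lacoste--Julien--Jaggi bookkeeping—each drop step decreases $|\Scal_t|$ by one while a good step grows it by at most one, and $|\Scal_0|=1$—implies that the number of good steps up to iteration $t$ is at least $\lceil (t-1)/2\rceil$, which accounts for the ceiling exponent in the statement.

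On a good step the step-size chosen in line~\ref{line:afw-nu-update} is not clipped, so Lemma~\ref{lemma:primal-progress} combined with the same optimization over $\gamma$ that produces the Bregman short step-size yields
\[
f(x_t)-f(x_{t+1}) \;\geq\; \frac{\nu}{1+\nu}\,\frac{\langle\nabla f(x_t),d_t\rangle^{\,1+1/\nu}}{(L(1+\nu)D_\phi(v_t,x_t))^{1/\nu}}.
\]
I would then invoke the pyramidal-width scaling inequality used in the proof of Theorem~\ref{theorem:convergence-rate-away-step-convex}—using Assumption~\ref{assumption:phi-strongly-conv} to bridge the Euclidean pyramidal width with the Bregman geometry via $\|v_t-x_t\|^2\leq 2D_\phi(v_t,x_t)/\sigma$ and $D_\phi(v_t,x_t)\leq D^2$—to bound $\langle\nabla f(x_t),d_t\rangle$ from below by a multiple of $\delta/D$ times the Frank--Wolfe gap. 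Lemma~\ref{lemma:primal-gap-Frank-Wolfe-gap-weakly} together with local $\mu$-quadratic growth furnishes the weak-convex reduction
\[
(1-\rho/\mu)(f(x_t)-f^*) \;\leq\; \max_{v\in P}\langle\nabla f(x_t),x_t-v\rangle,
\]
which is where the hypothesis $\rho<\mu$ becomes essential and where the constant $\omega=(\mu-\rho)^2/(8\mu)$ emerges after squaring. Substituting produces a per-good-step recursion $h_{t+1}\leq h_t - \kappa\, h_t^{1+1/\nu}$ for $h_t:=f(x_t)-f^*$, with $\kappa$ absorbing $L,\nu,\rho,\mu,\delta,D,\omega$.

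The three cases in the theorem then fall out from standard manipulations of this recursion, paralleling the convex proof. For $\nu=1$ the recursion is $h_{t+1}\le(1-\omega\delta^2/(4LD^2))h_t$ on good steps, giving linear convergence after accounting for the $\lceil(t-1)/2\rceil$ good-step count and the initial slack factor $2$. For $\nu\in(0,1)$ I would split at the first iterate $t_0$ at which the Bregman short step ceases to be clipped to $\gamma_{t,\max}$: before $t_0$ the inequality $\gamma_t\langle\nabla f(x_t),d_t\rangle\geq \gamma_{t,\max}\langle\nabla f(x_t),d_t\rangle$ combined with the weak-convex reduction yields the $(1+\nu\rho/\mu)/(1+\nu)$ contraction factor per good step; after $t_0$ the genuine Chung-style recursion $h_{t+1}\le h_t-\kappa h_t^{1+1/\nu}$ gives the $\O(1/t^{2\nu/(1-\nu)})$ rate by an induction on $1/h_t^{(1-\nu)/2\nu}$. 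Throughout, one must ensure by induction that the iterates stay inside $[f\leq f^*+\zeta]\cap P$ so that local quadratic growth can be applied at every step.

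The main obstacle will be two-fold: (i) making the pyramidal-width inequality quantitatively tight with Bregman geometry, since $\delta$ is defined in Euclidean terms while progress is expressed through $D_\phi$, requiring careful use of $\sigma$-strong convexity of $\phi$ in both directions; and (ii) propagating the $(1-\rho/\mu)$ correction cleanly through the clipped-step phase, the Chung-style unclipped phase, and the drop-step bookkeeping simultaneously, since the threshold $t_0$ and the constant $\omega$ depend on $\rho/\mu$ and on the pyramidal width in a coupled way that must be unwound to match the exact constants in the statement.
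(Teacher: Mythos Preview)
Your high-level outline (good/drop-step bookkeeping, progress lemma on good steps, weak-convex reduction $(1-\rho/\mu)h_t\le\max_v\langle\nabla f(x_t),x_t-v\rangle$, then a two-phase recursion) matches the paper. However, the central step is misidentified. The pyramidal-width scaling inequality does \emph{not} give ``$\langle\nabla f(x_t),d_t\rangle\gtrsim(\delta/D)\cdot\text{FW gap}$''; it bounds the full gap $\langle\nabla f(x_t),v_t^{\away}-v_t^{\FW}\rangle$ below by $\delta\cdot\langle\nabla f(x_t),x_t-x^*\rangle/\|x_t-x^*\|$. The paper combines this with weak convexity plus quadratic growth \emph{twice}---once to get $\langle\nabla f(x_t),x_t-x^*\rangle\ge(1-\rho/\mu)h_t$ and once to get $\|x_t-x^*\|\le\sqrt{2h_t/\mu}$---to obtain the key square-root bound $\langle\nabla f(x_t),d_t\rangle^2\ge\omega\delta^2 h_t$ (this is Lemma~\ref{lemma:scaling-inequality-pyramidal-weakly-conv}). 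Plugging this into the unclipped progress gives $h_t-h_{t+1}\ge\kappa\,h_t^{(1+\nu)/(2\nu)}$, not your $h_{t+1}\le h_t-\kappa\,h_t^{1+1/\nu}$. Your recursion would yield $\O(1/t^{\nu})$, which is inconsistent with the $\O(1/t^{2\nu/(1-\nu)})$ rate you are trying to prove; the factor of $2$ in the denominator of the exponent comes precisely from the square-root relation you are missing.

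Your obstacle (i) is also overstated: the paper uses the pyramidal width purely in Euclidean terms (Lemma~\ref{lemma:scaling-inequality-pyramidal} with $\psi=\nabla f(x_t)$), and the only place $D_\phi$ enters the main recursion is via $D_\phi(v_t,x_t)\le D^2$ in the denominator of the progress term. The strong convexity of $\phi$ is invoked only for the initial bound $h_1\le 2LD^2$ via $\tfrac{\rho}{2}\|x_0-x^*\|^2\le\tfrac{\rho}{\sigma}D^2\le LD^2$. Once you correct the pyramidal-width step to produce the square-root bound, the remainder of your plan (clipped phase giving the $(1+\nu\rho/\mu)/(1+\nu)$ contraction, then Lemma~\ref{lemma:contraction-convergence-rate} for the two-phase rate, with the $\lceil(t-1)/2\rceil$ from drop-step counting) goes through as in the paper.
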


Without loss of generality, $\sigma = 1$.
If $\rho > L$, we can use $(\rho + L)D^2$, instead of $2LD^2$ as the initial bound of $f(x_1) - f^*$. When $\phi = \frac{1}{2}\|\cdot\|^2$, \ie, $\nu = 1$, Theorems~\ref{theorem:convergence-rate-inner-optima-weakly-convex} and~\ref{theorem:convergence-rate-away-step-weakly-convex} show linear convergence rates for Euclidean cases. Even if we use the Euclidean distance, our convergence results are novel in that linear convergence is achieved for the first time for a certain type of nonconvex problem.
Moreover, when $P$ is $(\alpha,p)$-uniformly convex set (see Definition~\ref{def:uniformly-convex}) and $\phi = \frac{1}{2}\|\cdot\|^2$, we also establish local linear convergence (Theorems~\ref{theorem:convergence-rate-uniformly-convex-set-weakly-convex} and~\ref{theorem:convergence-rate-uniformly-convex-set-weakly-convex-another}) in Appendix~\ref{subsec:local-linear-convergence-over-uniformly-convex-sets}.

\section{Numerical Experiments}\label{sec:experiments}
In this section, we conduct numerical experiments to examine the performance of our algorithms.
All numerical experiments were performed in Julia 1.11 using the \texttt{FrankWolfe.jl} package~\citep{Besancon2022-lx}\footnote{\url{https://github.com/ZIB-IOL/FrankWolfe.jl}} on a MacBook Pro with an Apple M2 Max and 64GB LPDDR5 memory.

We compare \BregFW (Algorithm~\ref{alg:fw}) and \BregAFW (Algorithm~\ref{alg:afw}, whose results are shown in Figures~\ref{fig:phase-ret-away-200-200-110}) using the adaptive Bregman step-size strategy (Algorithm~\ref{alg:adaptive-bregman}), with existing algorithms, \EucFW, \EucAFW, \ShortFW, \ShortAFW, \OpenFW, \OpenAFW, \MD, and \ProjGD (their notation is described in Appendix~\ref{sec:experiments-app}).
In this section, we consider $\ell_p$ loss problems and phase retrieval, and we report additional experiments on nonnegative linear inverse problems, low-rank minimization, and NMF in Appendix~\ref{sec:experiments-app}.
Note that we included \OpenAFW only for comparison purposes; there is no established convergence theory for the away-step FW algorithm with the open loop. Indeed, there are no proper drop steps (operations corresponding to
line~\ref{line:remove-away-vertex} in Algorithm~\ref{alg:afw}), and the favorable properties of the away-step FW algorithm are lost; see \cite{Braun2025-sz}. We use $\beta = 0.9$, $\eta = 0.9$, $\tau = 2$, $\gamma_{\max} = 1$, and the termination criterion $\max_{v \in P}\langle\nabla f(x_t), x_t - v\rangle \leq 10^{-7}$ throughout all experiments. 

\begin{figure}[tb]
    \begin{minipage}[b]{0.49\linewidth}
        \centering
        \includegraphics[width=\linewidth]{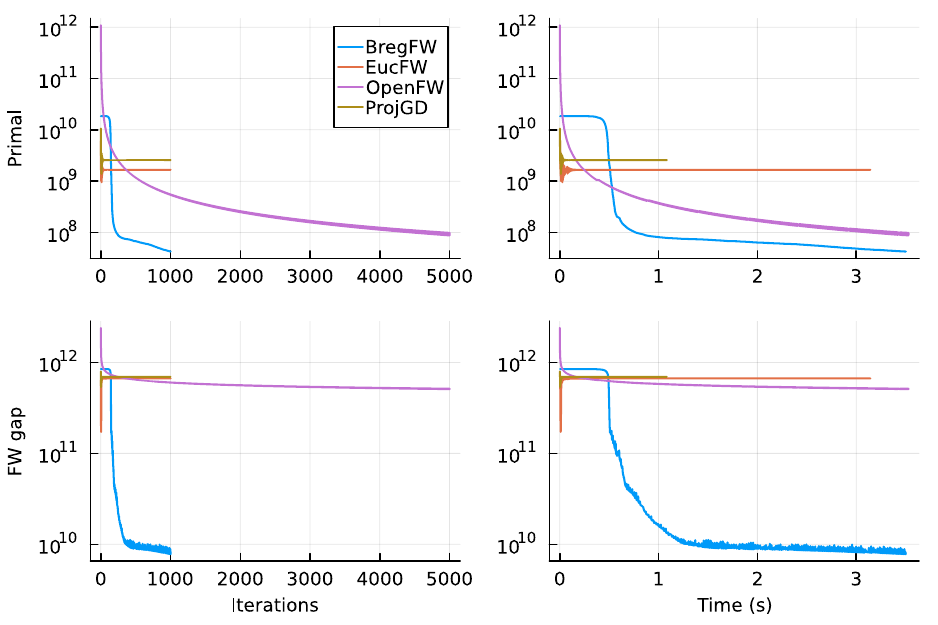}
        \caption{Log plot of primal and FW gaps on $\ell_p$ loss for gas sensor data with $b_{\max} = 130$.}
        \label{fig:lp-loss-gas-ball-130}
    \end{minipage}
    \hfill
    \begin{minipage}[b]{0.49\linewidth}
        \centering
        \includegraphics[width=\linewidth]{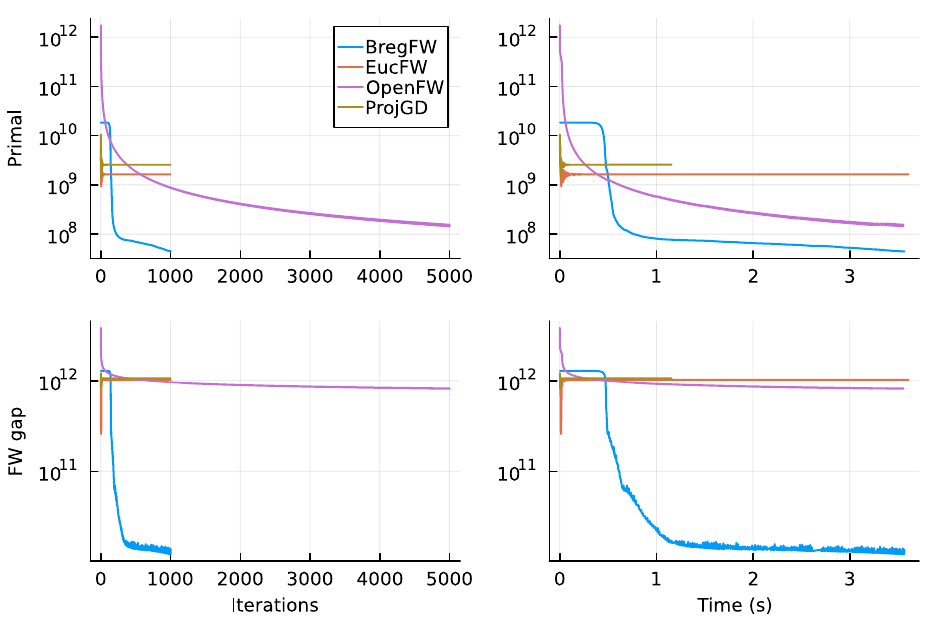}
        \caption{Log plot of primal and FW gaps on $\ell_p$ loss for gas sensor data with $b_{\max} = 200$.}
        \label{fig:lp-loss-gas-ball-200}
    \end{minipage}
    \begin{minipage}[b]{0.49\linewidth}
        \centering
        \includegraphics[width=\linewidth]{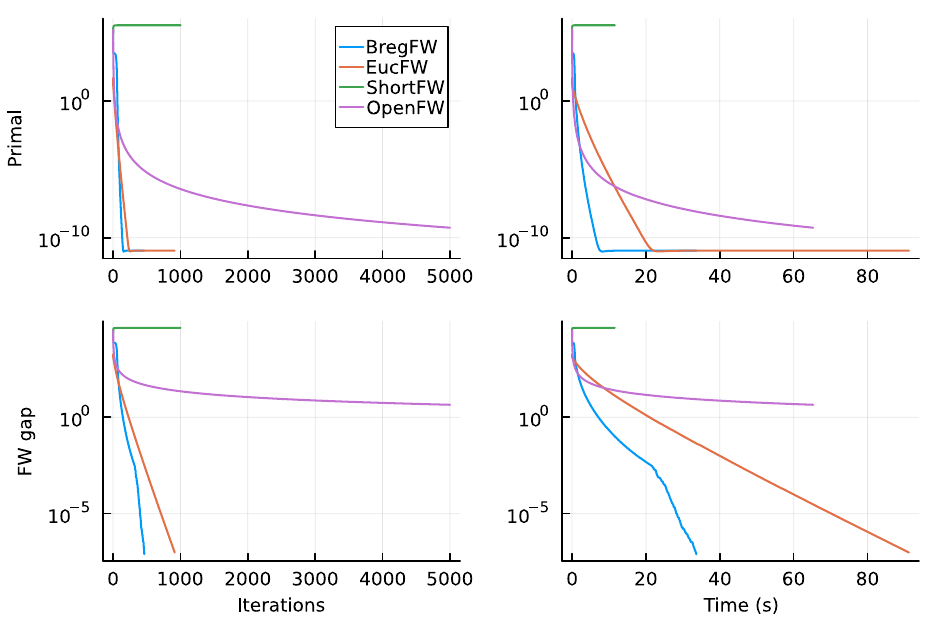}
        \caption{Log plot of primal and FW gaps on phase retrieval for $(m, n) = (1000,10000)$.}
        \label{fig:phase-ret-1000-10000-2000}
    \end{minipage}
    \hfill
    \begin{minipage}[b]{0.49\linewidth}
        \centering
        \includegraphics[width=\linewidth]{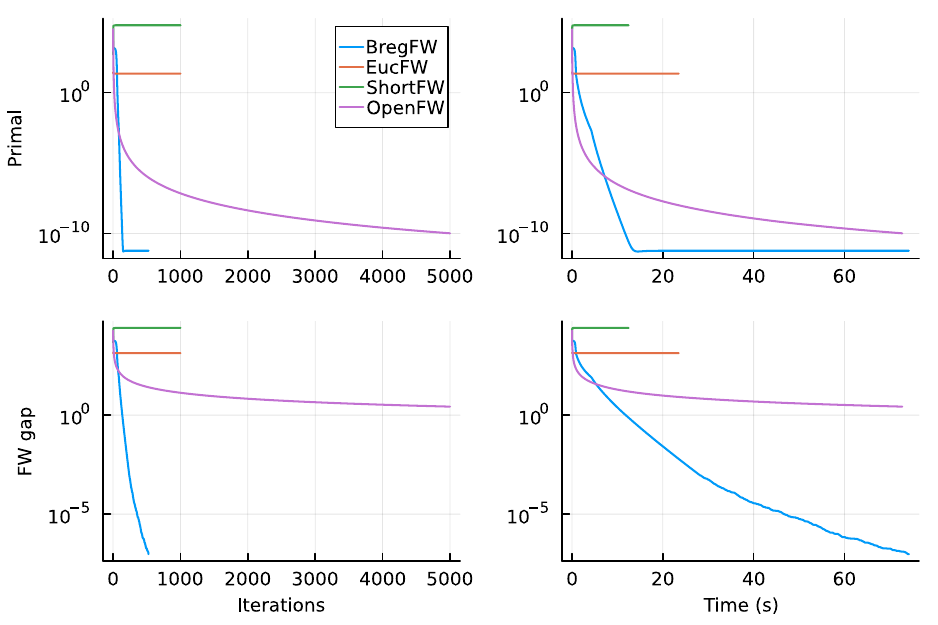}
        \caption{Log plot of primal and FW gaps on phase retrieval for $(m, n) = (2000,10000)$.}
        \label{fig:phase-ret-2000-10000-2000}
    \end{minipage}
\end{figure}

\subsection{\texorpdfstring{$\ell_p$}{lp} Loss Problem}\label{subsec:lp-loss}

We consider the $\ell_p$ loss problem~\citep{Kyng2015-uo,Maddison2021-nt} to find $x \in P$ such that $Ax \simeq b$, defined by a convex optimization problem $\min_{x \in P} \|Ax - b\|_p^p$, where $A \in \R^{m \times n}$, $b \in\R^m$, and $p > 1$.
Let $f(x) = \|Ax - b\|_p^p$.
The gradient $\nabla f$ is not Lipschitz continuous on $\R^n$ when $p \neq 2$.
Furthermore, when $p < 2$, $f$ is not $\C^2$ but $\C^1$.
Therefore, for $1 < p < 2$, $\nabla f$ is also not Lipschitz continuous over compact sets.
Since $f$ is convex, the pair $(f, \phi)$ is 1-smad with $\phi := f$.

We consider real-world gas sensor data.\footnote{The dataset is freely available at \url{https://archive.ics.uci.edu/dataset/270/gas+sensor+array+drift+dataset+at+different+concentrations}}
The data size is $(m, n) = (13910, 128)$.
We use the $\ell_2$ constraint $P = \Set{x \in \R^n}{\|x\|^2 \leq b_{\max}}$.
We compare \BregFW with \EucFW, \OpenFW, and \ProjGD because \ShortFW does not converge.
The maximum number of iterations is 1000 for all methods except \OpenFW, for which it is 5000.
The initial point $x_0$ is an all-ones vector.
We set $p = 1.1$, \ie, $f$ is $\C^1$ not $\C^2$.
Figures~\ref{fig:lp-loss-gas-ball-130} and~\ref{fig:lp-loss-gas-ball-200} show the primal and FW gaps per iteration and gaps per second for $b_{\max} = 130$ and $b_{\max} = 200$, respectively.
Because $\nabla f$ is not Lipschitz continuous, \ShortFW and \EucFW do not converge.
The primal gap and the FW gap achieved by \BregFW are the smallest among these methods because only \BregFW has a theoretical guarantee for this problem.

\subsection{Phase Retrieval}\label{subsec:phase-retrieval}
We are interested in phase retrieval, which involves finding a signal $x \in P \subset \R^n$ such that $|\langle a_i, x\rangle|^2 \simeq b_i$, $i = 1,\ldots,m$, where $a_i \in \R^n$ describes the model and $b \in \R^m$ is a vector of measurements.
Phase retrieval arises in many fields of science and engineering, such as image processing~\citep{Candes2015-mk}, X-ray crystallography~\citep{Patterson1934-sg,Patterson1944-nm}, and optics~\citep{Shechtman2015-ic}.
To achieve the goal of phase retrieval, we focus on the nonconvex optimization problem $\min_{x \in P}f(x) := \frac{1}{4}\sum_i(|\langle a_i, x\rangle|^2 - b_i)^2$.
Let $\phi$ be defined by $\sigma$-strongly convex $\phi(x) = \frac{1}{4}\|x\|^4 + \frac{1}{2}\|x\|^2$ for $\sigma \leq 1$.
For any $L$ satisfying $L\geq \sum_i (3\|a_i\|^4 + \|a_i\|^2|b_i|)$, the pair $(f, \phi)$ is $L$-smad on $\R^n$~\cite[Lemma 5.1]{Bolte2018-zt}.
In addition, $\nabla^2 f(x) + \rho I$ is positive semidefinite for $\rho \geq \sum_i\|a_i\|^2|b_i|$, \ie, $f$ is $\rho$-weakly convex.
Thus, $L \geq \rho/\sigma$ holds with $\sigma = 1$.

We use a $K$-sparse polytope as a constraint, \ie, $P = \Set{x \in \R^n}{\|x\|_1 \leq K, \|x\|_\infty \leq 1}$.
We compare \BregFW with \EucFW, \ShortFW, and \OpenFW.
The maximum number of iterations is 1000 for all methods except \OpenFW, for which it is 5000.
We generated $a_i$, $i = 1, \ldots, m$ from an i.i.d.\ normal distribution and normalized them to have norm 1 (random seed 1234).
We also generated $x^*$ from an i.i.d.\ uniform distribution in $[0,1]$ and normalized $x^*$ to have sum 1.
The initial point $x_0$ was generated by computing an extreme point of $P$ that minimizes the linear approximation of $f$.
Figures~\ref{fig:phase-ret-1000-10000-2000} and~\ref{fig:phase-ret-2000-10000-2000} show the primal and FW gaps per iteration and gaps per second for $(m, n) = (1000, 10000)$ and $K = 2000$ and for $(m, n) = (2000, 10000)$ and $K = 2000$, respectively.
The primal gap and the FW gap by the adaptive Bregman step-size strategy are the smallest among these step-size strategies.
In both cases, \BregFW stopped before the 1000th iteration.

\section{Conclusion and Future Work}
\paragraph{Summary:} We propose FW algorithms with an adaptive Bregman step-size strategy for smooth adaptable functions, which do not require Lipschitz continuous gradients.
We have established convergence rates ranging from sublinear to linear, depending on various assumptions for convex and nonconvex optimization.
We also propose a variant of the away-step FW algorithm using Bregman distances over polytopes and have established global convergence for convex optimization under the HEB condition and local linear convergence for nonconvex optimization under the local quadratic growth condition.
Numerical experiments show that our algorithms outperform existing algorithms.

\paragraph{Limitations and future work:} 
Our step-size strategy for $L$-smad functions requires the value of $\nu$, which must be estimated because it depends on the choice of the Bregman distance. Theoretically, linear convergence of the FW algorithm always holds for convex optimization if the exponent of the HEB condition $q$ equals the scaling exponent of the Bregman distance $1 + \nu$, \ie, $q = 1 + \nu$.
For $q > 1 + \nu$, if $t \leq t_0$, linear convergence holds; otherwise, sublinear convergence.
The condition $q = 1 + \nu$, derived from the growth of $f$ and $\phi$, is natural because $\phi$ has a similar structure to $f$.
On the other hand, for nonconvex optimization, we assume the local quadratic growth condition, \ie, the $2$-HEB condition. Since this is derived from the weak convexity of $f$, relaxing the quadratic growth condition may not be possible.
One direction is to consider an extension using the difference of convex functions (DC) optimization.
Recently,~\cite{Maskan2025-oa} incorporated DC optimization into FW algorithms and established sublinear convergence. Our algorithms could potentially benefit from DC optimization techniques.

\newpage
\section*{Acknowledgments}
This project has been partially supported by the Japan Society for the Promotion of Science (JSPS) through Invitational Fellowships for Research in Japan program (L24503), JSPS KAKENHI Grant Number JP23K19953 and JP25K21156; JSPS KAKENHI Grant Number JP23H03351 and JST ERATO Grant Number JPMJER1903. Research reported in this paper was also partially supported by the Deutsche Forschungsgemeinschaft (DFG) through the DFG Cluster of Excellence MATH+ (grant number EXC-2046/1, project ID 390685689).
\section*{LLM Usage} We used large language models, ChatGPT-5/5.1/5.2, solely to polish the grammar and wording of the author-written text (\eg, to improve clarity and tone). The model did not generate novel ideas, analyses, code, or results. All outputs were reviewed and edited by the authors, who take full responsibility for the content.
\section*{Ethics Statement} This study uses only publicly available datasets that contain no personally identifiable information. We did not collect any new human-subject data; therefore, IRB approval was not required. We also reviewed the datasets for potentially harmful content.
\section*{Reproducibility Statement} We include all evaluation code, configuration files, and scripts in the supplementary material. We provide the exact preprocessing steps for all datasets used. We fix and disclose random seeds for all experiments. Results are reported as mean $\pm$ standard deviation over multiple runs.

\bibliography{main}

@article{bolte17,
	Author = {Bolte, J. and Nguyen, T.-P. and Peypouquet, J. and Suter, B. W.},
	Journal = {Math. Program.},
	Pages = {471--507},
	Title = {From error bounds to the complexity of first-order descent methods for convex functions},
	Volume = {165},
	Year = {2017}}

@article{alex17sharp,
	author = {Roulet, V. and d'Aspremont, A.},
	journal = {{SIAM} J. Optim.},
	volume = 30,
	number = 1,
	pages = {262--289},
	publisher = {Curran Associates},
	title = {Sharpness, restart and acceleration},
	year = {2017}}

@article{KDP2018jour,
	annotate = {doi:10.1007/s10957-021-01989-7},
	arxiv = {http://arxiv.org/abs/1810.02429},
	author = {Kerdreux, T. and d'Aspremont, A. and Pokutta, S.},
	featured = 0,
	journal = {{J. Optim. Theory Appl.}},
	pages = {799--829},
	pdfurl = {https://dx.doi.org/10.1007/s10957-021-01989-7},
	ptype = {journal},
	slides = {https://app.box.com/s/prd32r6xmuef2k4gah23rd0egllz9rv5},
	summary = {http://www.pokutta.com/blog/research/2019/05/02/restartfw-abstract.html},
	title = {{Restarting Frank-Wolfe: Faster rates under H{\"o}lderian error bounds}},
	volume = {192},
	year = {2022}}

@INPROCEEDINGS{Kerdreux2019-ec,
  title     = "{Restarting Frank-Wolfe}",
  author    = "Kerdreux, T. and d'Aspremont, A. and Pokutta,
               S.",
  booktitle = "{Proceedings of the 22nd International Conference on Artificial
               Intelligence and Statistics (AISTATS)}",
  publisher = "PMLR",
  volume    =  89,
  pages     = "1275--1283",
  series    = "PMLR",
  year      =  2019
}

@INPROCEEDINGS{Diakonikolas2020-xi,
  title     = "{Locally accelerated conditional gradients}",
  author    = "Diakonikolas, J. and Carderera, A. and Pokutta,
               S.",
  booktitle = "{Proceedings of the 23rd International Conference on Artificial
               Intelligence and Statistics (AISTATS)}",
  publisher = "PMLR",
  series    = "PMLR",
  volume    =  108,
  pages     = "1737--1747",
  year      =  2020
}

@INPROCEEDINGS{Tsuji2022-gq,
  title     = "{Pairwise conditional gradients without swap steps and sparser
               kernel herding}",
  author    = "Tsuji, K. and Tanaka, K. and Pokutta, S.",
  booktitle = "{Proceedings of the 39th International Conference on Machine
               Learning (ICML)}",
  publisher = "PMLR",
  volume    =  162,
  pages     = "21864--21883",
  series    = "PMLR",
  year      =  2022
}

@article{KAP2021,
    arxiv = {https://arxiv.org/abs/2102.05134},
    author = {Kerdreux, T. and d'Aspremont, A. and Pokutta, S.},
    featured = {0},
    journal = {arXiv preprint arXiv:2102.05134},
    ptype = {preprint},
    title = {{Local and global uniform convexity conditions}},
    year = {2021}}

@article{cp2019approxCara,
    annotate = {doi:10.1007/s10107-021-01735-x},
    arxiv = {https://arxiv.org/abs/1911.04415},
    author = {Combettes, C. W. and Pokutta, S.},
    code = {https://colab.research.google.com/drive/1GLGRTc2jFYy9CqqoVnZgIFVQgAC0c3aZ},
    date-added = {2019-11-12 13:09:55 +0100},
    date-modified = {2020-05-31 02:04:36 +0200},
    featured = 0,
    journal = {Math. Program.},
    pages = {191--214},
    pdfurl = {https://rdcu.be/cCnPL},
    ptype = {journal},
    slides = {https://app.box.com/s/f0zuvr45qa6etidd06i1wart58o7bc8t},
    summary = {http://www.pokutta.com/blog/research/2019/11/30/approxCara-abstract.html},
    title = {{Revisiting the approximate Carath{\'e}odory problem via the Frank-Wolfe algorithm}},
    video = {https://www.youtube.com/watch?v=VB1e0HrDmVo},
    volume = {197},
    year = {2023}}

@INPROCEEDINGS{Braun2019-ra,
  title     = "{Blended conditional gradients: The unconditioning of conditional
               gradients}",
  author    = "Braun, G. and Pokutta, S. and Tu, D. and Wright,
               S.",
  booktitle = "{Proceedings of the 36th International Conference on Machine
               Learning}",
  publisher = "PMLR",
  volume    =  97,
  pages     = "735--743",
  series    = "PMLR",
  year      =  2019
}

@article{lan2013complexity,
  title={The complexity of large-scale convex programming under a linear optimization oracle},
  author={Lan, G.},
  journal={arXiv preprint arXiv:1309.5550},
  year={2013}
}

@ARTICLE{Combettes2021-ct,
  title     = "{Complexity of linear minimization and projection on some sets}",
  author    = "Combettes, C. W. and Pokutta, S.",
  journal   = "Oper. Res. Lett.",
  publisher = "Elsevier BV",
  volume    =  49,
  number    =  4,
  pages     = "565--571",
  year      =  2021
}

@ARTICLE{Bregman1967-rf,
  title     = "{The relaxation method of finding the common point of convex sets
               and its application to the solution of problems in convex
               programming}",
  author    = "Bregman, L. M.",
  journal   = "U.S.S.R. Comput. Math. Math. Phys.",
  publisher = "Elsevier BV",
  volume    =  7,
  number    =  3,
  pages     = "200--217",
  year      =  1967,
}

@ARTICLE{Bauschke1997-vk,
  title     = "{Legendre functions and the method of random Bregman projections}",
  author    = "Bauschke, H. H. and Borwein, J. M.",
  journal   = "J. Convex Anal.",
  publisher = "Citeseer",
  volume    =  4,
  number    =  1,
  pages     = "27--67",
  year      =  1997
}

@ARTICLE{Bolte2018-zt,
  title     = "{First order methods beyond convexity and {Lipschitz} gradient
               continuity with applications to quadratic inverse problems}",
  author    = "Bolte, J. and Sabach, S. and Teboulle, M. and
               Vaisbourd, Y.",
  journal   = "SIAM J. Optim.",
  publisher = "Society for Industrial \& Applied Mathematics (SIAM)",
  volume    =  28,
  number    =  3,
  pages     = "2131--2151",
  year      =  2018,
  language  = "en"
}

@ARTICLE{Bauschke2017-hg,
  title     = "{A descent lemma beyond Lipschitz gradient continuity:
               First-order methods revisited and applications}",
  author    = "Bauschke, H. H. and Bolte, J. and Teboulle, M.",
  journal   = "Math. Oper. Res.",
  publisher = "Institute for Operations Research and the Management Sciences
               (INFORMS)",
  volume    =  42,
  number    =  2,
  pages     = "330--348",
  year      =  2017,
  language  = "en"
}

@ARTICLE{Takahashi2024-wz,
  title         = "{Approximate Bregman proximal gradient algorithm for
                   relatively smooth nonconvex optimization}",
  author        = "Takahashi, S. and Takeda, A.",
  journal   = "Comput. Optim. Appl.",
  publisher = "Springer Science and Business Media LLC",
  volume    =  "90",
  number    =  "1",
  pages     = "227--256",
  year      =  2025,
  language  = "en"
}

@ARTICLE{Dhillon2008-zz,
  title     = "{Matrix nearness problems with {Bregman} divergences}",
  author    = "Dhillon, I. S. and Tropp, J. A.",
  journal   = "SIAM J. Matrix Anal. Appl.",
  publisher = "Society for Industrial \& Applied Mathematics (SIAM)",
  volume    =  29,
  number    =  4,
  pages     = "1120--1146",
  year      =  2008
}

@ARTICLE{Kullback1951-yv,
  title   = "{On information and sufficiency}",
  author  = "Kullback, S. and Leibler, R. A.",
  journal = "Ann. Math. Stat.",
  volume  =  22,
  number  =  1,
  pages   = "79--86",
  year    =  1951
}

@INPROCEEDINGS{Itakura1968-en,
  title     = "{Analysis synthesis telephony based on the maximum likelihood
               method}",
  author    = "Itakura, F. and Saito, S.",
  booktitle = "{Proceedings of the 6th International Congress on Acoustics}",
  year      =  1968
}

@ARTICLE{Takahashi2023-uh,
  title     = "{Blind deconvolution with non-smooth regularization via Bregman
               proximal {DCAs}}",
  author    = "Takahashi, S. and Tanaka, M. and Ikeda, S.",
  journal   = "Signal Processing",
  publisher = "Elsevier BV",
  volume    =  202,
  pages     =  108734,
  year      =  2023,
  language  = "en"
}

@ARTICLE{Dragomir2021-rv,
  title     = "{Quartic first-order methods for low-rank minimization}",
  author    = "Dragomir, R. A. and d'Aspremont, A. and Bolte, J.",
  journal   = "J. Optim. Theory Appl.",
  publisher = "Springer",
  volume    =  189,
  number    =  2,
  pages     = "341--363",
  year      =  2021
}

@ARTICLE{Lu2018-ii,
  title     = "{Relatively smooth convex optimization by first-order methods,
               and applications}",
  author    = "Lu, H. and Freund, R. M. and Nesterov, Y.",
  journal   = "SIAM J. Optim.",
  publisher = "Society for Industrial \& Applied Mathematics (SIAM)",
  volume    =  28,
  number    =  1,
  pages     = "333--354",
  year      =  2018,
}

@ARTICLE{Hanzely2021-sz,
  title     = "{Accelerated Bregman proximal gradient methods for relatively
               smooth convex optimization}",
  author    = "Hanzely, F. and Richt\'arik, P. and Xiao, L.",
  journal   = "Comput. Optim. Appl.",
  publisher = "Springer",
  volume    =  79,
  number    =  2,
  pages     = "405--440",
  year      =  2021
}

@INPROCEEDINGS{Liao2024-vd,
  title     = "{Error bounds, PL condition, and quadratic growth for weakly
               convex functions, and linear convergences of proximal point
               methods}",
  author    = "Liao, F. Y. and Ding, L. and Zheng, Y.",
  booktitle = "{6th Annual Learning for Dynamics \& Control Conference}",
  pages     = "993--1005",
  year      =  2024,
  series    = "PMLR",
  volume    =  242,
  language  = "en"
}

@INPROCEEDINGS{Combettes2020-jj,
  title     = "{Boosting Frank-Wolfe by chasing gradients}",
  author    = "Combettes, C. W. and Pokutta, S.",
  booktitle = "{Proceedings of the 37th International Conference on
               Machine Learning (ICML)}",
  volume    =  119,
  series    = "PMLR",
  publisher = "PMLR",
  pages     = "2111--2121",
  year      =  2020
}

@ARTICLE{Polyak1963-dr,
  title     = "{Gradient methods for the minimisation of functionals}",
  author    = "Polyak, B. T.",
  journal   = "U.S.S.R. Comput. Math. Math. Phys.",
  publisher = "Elsevier BV",
  volume    =  3,
  number    =  4,
  pages     = "864--878",
  year      =  1963,
  language  = "en"
}

@INPROCEEDINGS{Garber2020-tr,
  title     = "{Revisiting Frank-Wolfe for Polytopes: Strict Complementarity and
               Sparsity}",
  author    = "Garber, D.",
  booktitle = "{Adv. Neural Inf. Process. Syst.}",
  publisher = "Curran Associates, Inc.",
  volume    =  33,
  pages     = "18883--18893",
  year      =  2020
}

@ARTICLE{Vyguzov2025-ha,
  title     = "{An adaptive variant of the Frank--Wolfe method for relative
               smooth convex optimization problems}",
  author    = "Vyguzov, A. A. and Stonyakin, F. S.",
  journal   = "Comput. Math. Math. Phys.",
  publisher = "Pleiades Publishing Ltd",
  volume    =  65,
  number    =  3,
  pages     = "591--602",
  year      =  2025,
  language  = "en"
}

@INPROCEEDINGS{Pedregosa2020-ay,
  title     = "{Linearly convergent Frank-Wolfe with backtracking line-search}",
  author    = "Pedregosa, F. and Negiar, G. and Askari, A. and
               Jaggi, M.",
  booktitle = "{Proceedings of the 23rd International Conference on
               Artificial Intelligence and Statistics (AISTATS)}",
  volume    =  108,
  series    = "PMLR",
  publisher = "PMLR",
  pages     = "1--10",
  year      =  2020
}

@ARTICLE{Pokutta2024-qu,
  title   = "{The Frank-Wolfe algorithm: A short introduction}",
  author  = "Pokutta, S.",
  journal = "Jahresber. Dtsch. Math.-Ver.",
  volume  =  126,
  number  =  1,
  pages   = "3--35",
  year    =  2024
}

@BOOK{Braun2025-sz,
  title     = "{Conditional gradient methods: From core principles to AI
               applications}",
  author    = "Braun, G. and Carderera, A. and Combettes, C.
               W. and Hassani, H. and Karbasi, A. and Mokhtari, A. and
               Pokutta, S.",
  publisher = "SIAM",
  volume    = 35,
  address   = "Philadelphia, PA",
  series    = "MOS-SIAM Series on Optimization",
  year      =  2025
}

@INPROCEEDINGS{Jaggi2013-oz,
  title     = "{Revisiting {Frank--Wolfe}: Projection-free sparse convex
               optimization}",
  author    = "Jaggi, M.",
  booktitle = "{Proceedings of the 30th International Conference on Machine
               Learning (ICML)}",
  volume    =  28,
  pages     = "427--435",
  series    = "PMLR",
  publisher = "PMLR",
  year      =  2013
}

@INPROCEEDINGS{Lacoste-Julien2015-gb,
  title   = "{On the global linear convergence of Frank-Wolfe optimization
             variants}",
  author  = "Lacoste-Julien, S. and Jaggi, M.",
  booktitle = "Adv. Neural Inf. Process. Syst.",
  pages   = "496--504",
  volume  = "28",
  publisher = {Curran Associates, Inc.},
  year    =  2015
}

@ARTICLE{Lacoste-Julien2016-mj,
  title         = "{Convergence rate of Frank-Wolfe for non-convex objectives}",
  author        = "Lacoste-Julien, S.",
  journal       = "arXiv preprint arXiv:1607.00345",
  year          =  2016,
  archivePrefix = "arXiv",
  primaryClass  = "math.OC"
}

@ARTICLE{Takahashi2026-rv,
  title     = "{Majorization-minimization Bregman proximal gradient algorithms
               for NMF with the Kullback--Leibler divergence}",
  author    = "Takahashi, S. and Tanaka, M. and Ikeda, S.",
  journal   = "J. Optim. Theory Appl.",
  publisher = "Springer Science and Business Media LLC",
  volume    =  208,
  number    =  1,
  pages     = "1--34",
  year      =  2026,
  language  = "en"
}

@INCOLLECTION{Wolfe1970-ax,
  title     = "{Convergence theory in nonlinear programming}",
  author    = "Wolfe, P.",
  booktitle = "{Integer and nonlinear programming}",
  publisher = "North-Holland Publishing Company",
  address   = "Amsterdam",
  pages     = "1--36",
  year      =  1970
}

@ARTICLE{GueLat1986-bg,
  title     = "{Some comments on Wolfe's `away step'}",
  author    = "Gu\'eLat, J. and Marcotte, P.",
  journal   = "Math. Program.",
  publisher = "Springer",
  volume    =  35,
  number    =  1,
  pages     = "110--119",
  year      =  1986,
  language  = "en"
}

@ARTICLE{Davis2018-bb,
  title     = "{Subgradient methods for sharp weakly convex functions}",
  author    = "Davis, D. and Drusvyatskiy, D. and MacPhee, K. J. and
               Paquette, C.",
  journal   = "J. Optim. Theory Appl.",
  publisher = "Springer",
  volume    =  179,
  number    =  3,
  pages     = "962--982",
  series    = "Graduate Texts in MathematicsSpringer Monographs in Mathematics",
  year      =  2018
}

@ARTICLE{Patterson1934-sg,
  title   = "{A {Fourier} series method for the determination of the components
             of interatomic distances in crystals}",
  author  = "Patterson, A. L.",
  journal = "Phys. Rev.",
  volume  =  46,
  number  =  5,
  pages   = "372--376",
  year    =  1934
}

@ARTICLE{Patterson1944-nm,
  title     = "{Ambiguities in the X-ray analysis of crystal structures}",
  author    = "Patterson, A. L.",
  journal   = "Phys. Rev.",
  publisher = "American Physical Society (APS)",
  volume    =  65,
  number    = "5-6",
  pages     = "195--201",
  year      =  1944,
}

@ARTICLE{Candes2015-mk,
  title     = "{Phase retrieval from coded diffraction patterns}",
  author    = "Cand\'es, E. J. and Li, X. and Soltanolkotabi, M.",
  journal   = "Appl. Comput. Harmon. Anal.",
  publisher = "Elsevier",
  volume    =  39,
  number    =  2,
  pages     = "277--299",
  year      =  2015
}

@ARTICLE{Shechtman2015-ic,
  title     = "{Phase retrieval with application to optical imaging: A
               contemporary overview}",
  author    = "Shechtman, Y. and Eldar, Y. C. and Cohen, O. and Chapman, H. N. and Miao, J. and Segev, M.",
  journal   = "IEEE Signal Process. Mag.",
  publisher = "Institute of Electrical and Electronics Engineers (IEEE)",
  volume    =  32,
  number    =  3,
  pages     = "87--109",
  year      =  2015
}

@ARTICLE{Besancon2022-lx,
  title     = "{FrankWolfe.jl: A high-performance and flexible toolbox for
               Frank-Wolfe algorithms and conditional gradients}",
  author    = "Besan{\c{c}}on, M. and Carderera, A. and Pokutta, S.",
  journal   = "INFORMS J. Comput.",
  publisher = "Institute for Operations Research and the Management Sciences
               (INFORMS)",
  volume    =  34,
  number    =  5,
  pages     = "2611--2620",
  year      =  2022,
  language  = "en"
}

@INPROCEEDINGS{Mukkamala2019-mk,
  title     = "{Beyond alternating updates for matrix factorization with
               inertial Bregman proximal gradient algorithms}",
  author    = "Mukkamala, M. C. and Ochs, P.",
  booktitle = "{Adv. Neural Inf. Process. Syst.}",
  publisher = "Curran Associates, Inc.",
  pages     = "4268--4278",
  volume    =  32,
  year      =  2019
}

@ARTICLE{Frank1956-cq,
  title     = "{An algorithm for quadratic programming}",
  author    = "Frank, M. and Wolfe, P.",
  journal   = "Nav. Res. Logist. Q.",
  publisher = "books.google.com",
  volume    =  3,
  number    = "1-2",
  pages     = "95--110",
  year      =  1956
}

@ARTICLE{Levitin1966-sm,
  title     = "{Constrained minimization methods}",
  author    = "Levitin, E. S. and Polyak, B. T.",
  journal   = "U.S.S.R. Comput. Math. Math. Phys.",
  publisher = "Elsevier BV",
  volume    =  6,
  number    =  5,
  pages     = "1--50",
  year      =  1966,
}

@ARTICLE{Canon1968-sr,
  title     = "{A tight upper bound on the rate of convergence of Frank-Wolfe
               algorithm}",
  author    = "Canon, M. D. and Cullum, C. D.",
  journal   = "SIAM J. Control Optim.",
  publisher = "SIAM",
  volume    =  6,
  number    =  4,
  pages     = "509--516",
  year      =  1968
}

@ARTICLE{Takahashi2022-ml,
  title     = "{New Bregman proximal type algorithms for solving DC optimization
               problems}",
  author    = "Takahashi, S. and Fukuda, M. and Tanaka, M.",
  journal   = "Comput. Optim. Appl.",
  publisher = "Springer Science and Business Media LLC",
  volume    =  83,
  number    =  3,
  pages     = "893--931",
  year      =  2022,
  language  = "en"
}

@ARTICLE{Rebegoldi2018-pa,
  title     = "{A Bregman inexact linesearch-based forward-backward algorithm
               for nonsmooth nonconvex optimization}",
  author    = "Rebegoldi, S. and Bonettini, S. and Prato, M.",
  journal   = "J. Phys. Conf. Ser.",
  publisher = "iopscience.iop.org",
  volume    =  1131,
  pages     =  012013,
  year      =  2018
}

@ARTICLE{Wu2007-dt,
  title     = "{Sufficient global optimality conditions for weakly convex
               minimization problems}",
  author    = "Wu, Z. Y.",
  journal   = "J. Glob. Optim.",
  publisher = "Springer Science and Business Media LLC",
  volume    =  39,
  number    =  3,
  pages     = "427--440",
  year      =  2007,
  language  = "en"
}

@ARTICLE{Davis2024-rm,
  title     = "{Stochastic algorithms with geometric step decay converge
               linearly on sharp functions}",
  author    = "Davis, D. and Drusvyatskiy, D. and Charisopoulos,
               V.",
  journal   = "Math. Program.",
  publisher = "Springer Science and Business Media LLC",
  volume    =  207,
  number    = "1-2",
  pages     = "145--190",
  year      =  2024,
  language  = "en"
}

@ARTICLE{Davis2024-og,
  title     = "{A local nearly linearly convergent first-order method for
               nonsmooth functions with quadratic growth}",
  author    = "Davis, D. and Jiang, L.",
  journal   = "Found. Comput. Math.",
  publisher = "Springer Science and Business Media LLC",
  volume    =  25,
  pages     = "943--1024",
  year      =  2024
}

@ARTICLE{Vial1983-oo,
  title     = "{Strong and weak convexity of sets and functions}",
  author    = "Vial, J.",
  journal   = "Math. Oper. Res.",
  publisher = "INFORMS",
  volume    =  8,
  number    =  2,
  pages     = "231--259",
  year      =  1983,
  language  = "en"
}

@ARTICLE{Bauschke2001-ry,
  title     = "{Joint and separate convexity of the Bregman distance}",
  author    = "Bauschke, H. H. and Borwein, J.",
  journal   = "Studies in Computational Mathematics",
  publisher = "Elsevier",
  volume    =  8,
  pages     = "23--36",
  year      =  2001
}

@ARTICLE{Vardi1985-de,
  title     = "{A statistical model for positron emission tomography}",
  author    = "Vardi, Y. and Shepp, L. A. and Kaufman, L.",
  journal   = "J. Am. Stat. Assoc.",
  publisher = "Taylor \& Francis",
  volume    =  80,
  number    =  389,
  pages     = "8--20",
  year      =  1985
}

@ARTICLE{Bertero2009-jq,
  title     = "{Image deblurring with Poisson data: from cells to galaxies}",
  author    = "Bertero, M. and Boccacci, P. and Desiderà, G. and Vicidomini, G.",
  journal   = "Inverse Probl.",
  publisher = "IOP Publishing",
  volume    =  25,
  number    =  12,
  pages     =  123006,
  year      =  2009,
}

@BOOK{Nemirovskij1983-sm,
  title     = "{Problem Complexity and Method Efficiency in Optimization}",
  author    = "Nemirovskij, A. S. and Yudin, D. B.",
  publisher = "Wiley",
  address   = "New York, NY",
  year      =  1983
}

@BOOK{Beck2017-qc,
  title     = "{First-Order Methods in Optimization}",
  author    = "Beck, A.",
  publisher = "SIAM",
  address   = "Philadelphia, PA",
  volume    =  25,
  series    = "MOS-SIAM Series on Optimization",
  year      =  2017
}

@ARTICLE{Maddison2021-nt,
  title     = "{Dual space preconditioning for gradient descent}",
  author    = "Maddison, C. J. and Paulin, D. and Teh, Y. W. and
               Doucet, A.",
  journal   = "SIAM J. Optim.",
  publisher = "Society for Industrial \& Applied Mathematics (SIAM)",
  volume    =  31,
  number    =  1,
  pages     = "991--1016",
  year      =  2021,
  language  = "en"
}

@INPROCEEDINGS{Kyng2015-uo,
  title     = "{Algorithms for Lipschitz learning on graphs}",
  author    = "Kyng, R. and Rao, A. and Sachdeva, S. and Spielman,
               D. A.",
  booktitle = "{Conference on Learning Theory}",
  publisher = "PMLR",
  volume    =  40,
  pages     = "1190--1223",
  series    = "PMLR",
  year      =  2015
}

@ARTICLE{Bolte2007-xr,
  title     = "{The \L{}ojasiewicz inequality for nonsmooth subanalytic functions
               with applications to subgradient dynamical systems}",
  author    = "Bolte, J. and Daniilidis, A. and Lewis, A.",
  journal   = "SIAM J. Optim.",
  publisher = "SIAM",
  volume    =  17,
  number    =  4,
  pages     = "1205--1223",
  year      =  2007
}

@INCOLLECTION{Lojasiewicz1963-sm,
  title     = "{Une propriété topologique des sous-ensembles analytiques réels}",
  author    = "\L{}ojasiewicz, S.",
  booktitle = "{Les équations aux dérivées partielles}",
  publisher = "\'{E}ditions du centre National de la Recherche Scientifique",
  address   = "Paris",
  pages     = "87--89",
  year      =  1963
}

@MISC{Lojasiewicz1965-ks,
  title    = "{Ensembles semi-analytiques}",
  author   = "\L{}ojasiewicz, S.",
  year     =  1965,
  howpublished = "Available at \url{http://perso.univ-rennes1.fr/michel.coste/Lojasiewicz.pdf}",
}

@ARTICLE{Nurminskii1975-vp,
  title     = "{The quasigradient method for the solving of the nonlinear
               programming problems}",
  author    = "Nurminskii, E. A.",
  journal   = "Cybern. Syst. Anal.",
  publisher = "Springer Science and Business Media LLC",
  volume    =  9,
  number    =  1,
  pages     = "145--150",
  year      =  1975,
  language  = "en"
}

@ARTICLE{Maskan2025-oa,
  title         = "{Revisiting Frank-Wolfe for structured nonconvex
                   optimization}",
  author        = "Maskan, H. and Hou, Y. and Sra, S. and Yurtsever,
                   A.",
  journal       = "arXiv preprint arXiv:2503.08921",
  year          =  2025,
  archivePrefix = "arXiv",
  primaryClass  = "math.OC"
}

@INPROCEEDINGS{Kerdreux2021-kc,
  title     = "{Projection-free optimization on uniformly convex sets}",
  author    = "Kerdreux, T. and d'Aspremont, A. and Pokutta,
               S.",
  booktitle = "{Proceedings of the 24th International Conference on Artificial
               Intelligence and Statistics (AISTATS)}",
  volume    =  130,
  series    = "PMLR",
  publisher = "PMLR",
  pages     = "19--27",
  year      =  2021
}

@INPROCEEDINGS{Garber2015-de,
  title     = "{Faster rates for the Frank-Wolfe method over strongly-convex
               sets}",
  author    = "Garber, D. and Hazan, E.",
  booktitle = "{Proceedings of the 32nd International Conference on Machine
               Learning (ICML)}",
  volume    =  37,
  series    = "PMLR",
  publisher = "PMLR",
  pages     = "541--549",
  year      =  2015
}

@ARTICLE{Yang2025-ls,
  title     = "{Inexact Bregman proximal gradient method and its inertial
               variant with absolute and partial relative stopping criteria}",
  author    = "Yang, L. and Toh, K.-C.",
  journal   = "Math. Oper. Res.",
  publisher = "Institute for Operations Research and the Management Sciences
               (INFORMS)",
  year      =  2025,
  language  = "en"
}

@ARTICLE{Bot2016-jq,
  title     = "{An inertial forward-backward algorithm for the minimization of
               the sum of two nonconvex functions}",
  author    = {Bo\c{t}, R. I. and Csetnek, E. R. and L\'{a}szl\'{o}, S. C.},
  journal   = "EURO J. Comput. Optim.",
  publisher = "Elsevier BV",
  volume    =  4,
  number    =  1,
  pages     = "3--25",
  year      =  2016,
  language  = "en"
}

@ARTICLE{Stella2017-vi,
  title     = "{Forward-backward quasi-Newton methods for nonsmooth optimization
               problems}",
  author    = "Stella, L. and Themelis, A. and Patrinos, P.",
  journal   = "Comput. Optim. Appl.",
  publisher = "Springer",
  volume    =  67,
  number    =  3,
  pages     = "443--487",
  year      =  2017,
  language  = "en"
}

@TECHREPORT{Allen-Zhu2017-wy,
  title       = "{Linear coupling: An ultimate unification of gradient and
                 mirror descent}",
  author      = "Allen-Zhu, Z. and Orecchia, L.",
  institution = "Innovations in Theoretical Computer Science",
  year        =  2017
}

@INPROCEEDINGS{Krichene2015-jj,
  title     = "{Accelerated mirror descent in continuous and discrete time}",
  author    = "Krichene, W. and Bayen, A. and Bartlett, P. L.",
  booktitle = "{Adv. Neural Inf. Process. Syst.}",
  volume    =  28,
  pages     = "2845--2853",
  year      =  2015
}

@ARTICLE{Fujiki2025-do,
  title         = "{Approximate Bregman proximal gradient algorithm with
                   variable metric Armijo--Wolfe line search}",
  author        = "Fujiki, K. and Takahashi, S. and Takeda, A.",
  journal       = "arXiv preprint arXiv:2510.06615",
  year          =  2025,
  archivePrefix = "arXiv",
  primaryClass  = "math.OC"
}
\bibliographystyle{iclr2026_conference}

\appendix
\section{Related Work}\label{sec:related-work}
\paragraph{Overview of Existing Bregman-Based Algorithm Studies:}
The Bregman gradient method was first introduced as the mirror descent by~\cite{Nemirovskij1983-sm}. Accelerated mirror descent methods were proposed by~\cite{Krichene2015-jj,Allen-Zhu2017-wy}.
Some first-order methods based on the Bregman distance do not require the global Lipschitz continuity of $\nabla f$. \cite{Bolte2018-zt} proposed the Bregman proximal gradient algorithm (BPG) for nonconvex optimization problems and established its global convergence under the smooth adaptable property (see Definition~\ref{def:lsmad}). \cite{Hanzely2021-sz} introduced the accelerated BPGs. \cite{Takahashi2022-ml} developed the BPG for the DC optimization and its accelerated version. The subproblem of BPG is often not solvable in closed form. \cite{Takahashi2024-wz,Fujiki2025-do} used approximations to Bregman distances to solve subproblems in closed form. \cite{Takahashi2026-rv} developed the majorization-minimization BPG and its accelerated version, and applied them to NMF. \cite{Rebegoldi2018-pa} proposed an inexact version of the Bregman proximal gradient algorithm.

\section{Results from Smooth Adaptable Property}
\subsection{Properties of Bregman Distances}
The Bregman distance $D_\phi(x, y)$ measures the proximity between $x\in\dom\phi$ and $y\in\interior\dom\phi$.
Indeed, since $\phi$ is convex, it holds that $D_\phi(x, y) \geq 0$ for all $x \in \dom\phi$ and $y \in\interior\dom\phi$.
Moreover, when $\phi$ is strictly convex, $D_\phi(x,y) = 0$ holds if and only if $x = y$.
However, the Bregman distance is not always symmetric and does not have to satisfy the triangle inequality. The Bregman distance is also called the Bregman divergence.
\begin{example}\label{example:bregman}
Well-known choices for $\phi$ and $D_\phi$ are listed below; for more examples, see, \eg, \cite[Example 1]{Bauschke2017-hg}, \cite[Section 6]{Bauschke1997-vk}, and \cite[Table 2.1]{Dhillon2008-zz}.
\begin{enumerate}
    \item Let $\phi(x) = \frac{1}{2}\|x\|^2$ and $\dom\phi = \R^n$.
    Then, the Bregman distance corresponds to the squared Euclidean distance, \ie, $D_\phi(x, y) = \frac{1}{2}\|x - y\|^2$.
    \item The Boltzmann--Shannon entropy $\phi(x) = \sum_{i=1}^n x_i \log x_i$ with $0\log 0 = 0$ and $\dom\phi = \R_+^n$.
    Then, $D_\phi(x, y) = \sum_{i = 1}^n ( x_i \log \frac{x_i}{y_i} - x_i + y_i)$ is called the Kullback--Leibler (KL) divergence~\citep{Kullback1951-yv}.
    \item The Burg entropy $\phi(x) = -\sum_{i=1}^n\log x_i$ and $\dom\phi = \R_{++}^n$.
    Then, $D_\phi(x, y) = \sum_{i=1}^n(\frac{x_i}{y_i} - \log\frac{x_i}{y_i} - 1)$ is called the Itakura--Saito divergence~\citep{Itakura1968-en}.
    \item Let $\phi(x) = \frac{1}{4}\|x\|^4 + \frac{1}{2}\|x\|^2$ and $\dom\phi = \R^n$.
    The Bregman distance $D_\phi$ is used in phase retrieval~\citep{Bolte2018-zt}, low-rank minimization~\citep{Dragomir2021-rv}, NMF~\citep{Mukkamala2019-mk}, and blind deconvolution~\citep{Takahashi2023-uh}.
\end{enumerate}
\end{example}
We recall the triangle scaling property for Bregman distances from \cite[Section 2]{Hanzely2021-sz}, where several properties of the triangle scaling property are also shown.

\begin{definition}[Triangle Scaling Property~{\cite[Definition 2]{Hanzely2021-sz}}]
Given a kernel generating distance $\phi\in\G(C)$, the Bregman distance $D_\phi$ has the \emph{triangle scaling property} if there exists a constant $\nu > 0$ such that, for all $x, y, z \in\interior\dom\phi$ and all $\gamma\in[0,1]$, it holds that
\begin{align}
    D_\phi((1- \gamma)x + \gamma y, (1 - \gamma)x + \gamma z) \leq \gamma^\nu D_\phi(y, z). \label{ineq:triangle-scaling}
\end{align}
\end{definition}

Now, substituting $z \leftarrow x$ on the left-hand side of~\Eqref{ineq:triangle-scaling}, we obtain
\begin{align*}
    D_\phi((1- \gamma)x + \gamma y, x) &= \phi((1- \gamma)x + \gamma y) - \phi(x) - \gamma\langle\nabla\phi(x), y - x\rangle\\
    &\leq (1- \gamma)\phi(x) + \gamma\phi(y) - \phi(x) - \gamma\langle\nabla\phi(x), y - x\rangle = \gamma D_\phi(y, x),
\end{align*}
where the inequality holds because of the convexity of $\phi$.
Therefore, there exists $\nu > 0$ such that $D_\phi((1- \gamma)x + \gamma y, x) \leq \gamma^\nu D_\phi(y, x)$ holds for all $x, y \in \interior\dom\phi$ and all $\gamma\in[0,1]$.

We will now show that a stronger version can be obtained if $\phi$ is strictly convex, so that we can rephrase $\nu$ with $1 + \nu$ where $\nu > 0$, \ie, the right-hand side is superlinear, which will be crucial for the convergence analysis later.

\begin{lemma}\label{lemma:bregman-nu}
Given a kernel generating distance $\phi\in\G(C)$, if $\phi$ is strictly convex, then there exists $\nu > 0$ such that, for all $x, y \in \interior\dom\phi$ and all $\gamma\in[0,1]$, it holds that
\begin{align}
    D_\phi((1-\gamma)x + \gamma y, x)\leq\gamma^{1+\nu}D_\phi(y, x). \label{ineq:bregman-nu}
\end{align}
\end{lemma}
\begin{proof}
    \Eqref{ineq:bregman-nu} holds if $y = x$ or $\gamma \in\{ 0, 1\}$.
    In what follows, we assume $y \neq x$ and $0 < \gamma < 1$.
    Let $g(\nu) := \gamma^{1+\nu}D_\phi(y, x) - D_\phi((1-\gamma)x + \gamma y, x)$ for all $x, y \in \interior\dom\phi$, $y\neq x$ and all $\gamma\in(0,1)$.
    Using $D_\phi(y, x) > 0$ due to the strict convexity of $\phi$, we have, for any $\nu \geq 0$,
    \begin{align*}
        g'(\nu) = \gamma^{1+\nu}D_\phi(y, x)\log\gamma < 0,
    \end{align*}
    which implies $g$ monotonically decreases.
    In addition, it holds that
    \begin{align*}
        g(0) &= \gamma D_\phi(y, x) - D_\phi((1-\gamma)x + \gamma y, x)\\
        &= \gamma(\phi(y) - \phi(x) - \langle\nabla\phi(x), y - x\rangle) - \phi((1-\gamma)x + \gamma y) + \phi(x) + \gamma\langle\nabla\phi(x), y - x\rangle\\
        &= (1 - \gamma)\phi(x) + \gamma\phi(y) - \phi((1-\gamma)x + \gamma y)> 0,
    \end{align*}
    where the last inequality holds because $\phi$ is strictly convex, \ie, $\phi((1-\gamma)x + \gamma y) < (1-\gamma)\phi(x) + \gamma\phi(y)$.
    Therefore, there exists $\nu > 0$ such that $g(\nu) \geq 0$ by the intermediate value theorem.
\end{proof}

Because $\phi$ is quadratic when $D_\phi$ is symmetric~\cite[Lemma 3.16]{Bauschke2001-ry}, it always holds that $D_\phi((1-\gamma)x + \gamma y, x)=\gamma^{1+\nu}D_\phi(y, x)$ with $\nu = 1$.

\subsection{Properties of Smooth Adaptable Property}\label{subsec:properties-of-smooth-adaptable}
The convexity of $L\phi - f$ and $L\phi + f$ plays a central role in developing and analyzing algorithms, and the smooth adaptable property implies the extended descent lemma.

\begin{lemma}[Extended Descent Lemma~{\cite[Lemma 2.1]{Bolte2018-zt}}]
    \label{lemma:extended-descent}
    The pair of functions $(f, \phi)$ is $L$-smad on $C$ if and only if for all $x,y\in\interior\dom\phi$,
    \begin{align*}
        |f(x) - f(y) - \langle\nabla f(y),x-y\rangle|\leq LD_\phi(x,y).
    \end{align*}
\end{lemma}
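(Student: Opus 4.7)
The plan is to prove the equivalence by applying the standard first-order characterization of convexity to the two auxiliary functions $L\phi - f$ and $L\phi + f$, which are differentiable on $C$ by Assumption~\ref{assumption:setting}. Recall that for a differentiable function $h$ on the open convex set $C$, convexity is equivalent to the gradient inequality $h(x) \geq h(y) + \langle \nabla h(y), x - y\rangle$ for all $x,y\in C$.

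For the forward direction, assume $(f,\phi)$ is $L$-smad. Applying the gradient inequality to the convex function $L\phi - f$ at the pair $(x,y)\in C\times C$ yields
\begin{align*}
L\phi(x) - f(x) \geq L\phi(y) - f(y) + \langle L\nabla\phi(y) - \nabla f(y),\, x-y\rangle.
\end{align*}
Rearranging and grouping the Bregman distance gives $f(x) - f(y) - \langle \nabla f(y), x-y\rangle \leq LD_\phi(x,y)$. The same computation applied to $L\phi + f$ yields $-(f(x) - f(y) - \langle \nabla f(y), x-y\rangle) \leq LD_\phi(x,y)$, i.e., a lower bound by $-LD_\phi(x,y)$. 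Combining these two one-sided bounds gives the absolute-value inequality.

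For the converse, assume the absolute-value bound holds on $\interior\dom\phi$. Splitting into the two one-sided inequalities and rearranging, the upper bound $f(x)-f(y)-\langle\nabla f(y),x-y\rangle \leq LD_\phi(x,y)$ is equivalent to
\begin{align*}
(L\phi - f)(x) \geq (L\phi - f)(y) + \langle \nabla(L\phi - f)(y),\, x - y\rangle,
\end{align*}
which is precisely the gradient characterization of convexity for the differentiable function $L\phi - f$ on $C$. Analogously, the lower bound $-LD_\phi(x,y) \leq f(x)-f(y)-\langle\nabla f(y),x-y\rangle$ rearranges to the gradient inequality for $L\phi + f$. Hence both $L\phi - f$ and $L\phi + f$ are convex on $C$, which is the definition of $L$-smad.

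There is no real obstacle here: the proof is purely algebraic, as it just identifies the Bregman term $LD_\phi(x,y) = L(\phi(x) - \phi(y) - \langle\nabla\phi(y), x-y\rangle)$ inside the two gradient-inequality formulations. The only mild subtlety is ensuring that $L\phi \pm f$ are differentiable so that the gradient characterization of convexity applies, but this is immediate from Assumption~\ref{assumption:setting}(\ref{assumption:phi}) and (\ref{assumption:f}).
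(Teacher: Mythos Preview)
Your proof is correct and is exactly the standard argument: the paper does not give its own proof here but simply cites \cite[Lemma 2.1]{Bolte2018-zt}, where the same first-order characterization of convexity applied to $L\phi \pm f$ is used. There is nothing to add.
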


From this, it can be seen that the $L$-smad property for $(f,\phi)$ provides upper and lower approximations for $f$ majorized by $\phi$ with $L > 0$.
In addition, if $\phi(x)=\frac{1}{2}\|x\|^2$ on $\interior\dom\phi = \R^n$, Lemma~\ref{lemma:extended-descent} corresponds to the classical descent lemma. While the $L$-smad property might seem unfamiliar at first, it is a natural generalization of $L$-smoothness, and examples of functions $f$ and $\phi$ satisfying the $L$-smad property are given, \eg, in \cite[Lemmas 7 and 8]{Bauschke2017-hg},~\cite[Lemma 5.1]{Bolte2018-zt}, \cite[Propositions 2.1 and 2.3]{Dragomir2021-rv}, \cite[Proposition 2.1]{Mukkamala2019-mk}, \cite[Proposition 24]{Takahashi2024-wz}, \cite[Theorem 1]{Takahashi2023-uh}, and \cite[Theorem 15]{Takahashi2026-rv}.

\begin{example}\label{example:l-smad}
Other examples can be found in Sections~\ref{subsec:lp-loss},~\ref{subsec:phase-retrieval}, Appendices~\ref{subsec:nonnegative-linear-inverse},~\ref{subsec:low-rank-minimization}, and~\ref{subsec:nmf}.
\begin{enumerate}
    \item Blind deconvolution: Let $f:\R^{d_1} \times \R^{d_2} \to \R$ be $f(h,x) = \frac{1}{4}\|Bh\odot Ax - y\|^2$, where $\odot$ denotes the Hadamard product (also known as the element-wise product). The function $f$ can be decomposed into a DC function $f(h,x) = f_1(h,x) - f_2(h,x)$ with $f_1(h,x) = \frac{1}{4}\|Bh\|_4^4 + \frac{1}{4}\|Ax\|_4^4 + \frac{1}{2}\left(\|Bh\odot Ax\|^2 + \|y\odot Bh\|^2 + \|Ax\|^2 + \|y\|^2\right)$ and $f_2(h,x) = \frac{1}{4}\|Bh\|_4^4 + \frac{1}{4}\|Ax\|_4^4 + \frac{1}{2}\|y\odot Bh + Ax\|^2$, where $B \in\R^{m \times d_1}$, $A \in \R^{m \times d_2}$, and $y \in \R^m$. Let $\phi(h,x) = \frac{1}{4}(\|h\|^2 + \|x\|^2)^2 + \frac{1}{2}(\|h\|^2 + \|x\|^2)$ For any $L$ satisfying
    \begin{align*}
        L \geq \sum_{j=1}^m\left(3\|b_j\|^4 + 3\|a_j\|^4 + \|b_j\|\|a_j\| + |y_j|\|b_j\|^2 + \|a_j\|^2\right),
    \end{align*}
    where $b_j$ and $a_j$ are the $j$th row vectors of $B$ and $A$, respectively, $(f_1, \phi)$ is $L$-smad~\cite[Theorem 1]{Takahashi2023-uh}. Note that Bregman proximal DC algorithms require the smooth adaptable property of $(f_1, \phi)$, instead of $(f, \phi)$.
    \item KL-NMF: Let $f:\R_+^{m \times r}\times\R_+^{r \times n} \to \R$ be $f(W, H) = \sum_{i=1}^m\sum_{j=1}^n(X_{ij}\log\frac{X_{ij}}{(WH)_{ij}} - X_{ij} + (WH)_{ij})$, where $X\in\R_+^{m\times n}$. $f$ is bounded by $\hat{f}(W,H) = \sum_{i,j}(X_{ij}\log X_{ij} - X_{ij}\sum_{l=1}^r \alpha_{ilj}\log \frac{W_{il}H_{lj}}{\alpha_{ilj}} - X_{ij} + (WH)_{ij})$ with $\alpha_{ilj}\in[0,1]$, $i = 1,\ldots,m$, $l = 1, \ldots, r$, $j = 1, \ldots, n$.
    Let $\phi(W, H) = \sum_{i,l}(-\log W_{il}+\frac{1}{2}W_{il}^2) + \sum_{l,j}(-\log H_{lj}+\frac{1}{2}H_{lj}^2)$.
    Then, for any $L_k > 0$ satisfying
    \begin{align*}
        L \geq \max\left\{\max_{i,l}\left\{\sum_{j=1}^n\alpha_{ilj}X_{ij}\right\},\max_{l,j}\left\{\sum_{i=1}^m\alpha_{ilj}X_{ij}\right\}, m, n\right\},
    \end{align*}
    the pair $(\hat{f},\phi)$ is $L$-smad on $\R_{++}^{m\times r}\times\R_{++}^{r \times n}$~\cite[Theorem 4.1]{Takahashi2026-rv}. Note that the majorization-minimization BPGs~\citep{Takahashi2026-rv} require the smooth adaptable property of $(\hat{f}, \phi)$, instead of $(f, \phi)$.
\end{enumerate}
\end{example}

\begin{remark}\label{remark:how-to-choose-kernel}
    In general, it is difficult to choose the best $\phi$ such that $(f,\phi)$ is $L$-smad. On the other hand, it is known how to choose a suitable $\phi$ empirically.
    An easy way to find $\phi$ is to use elementary functions similar to $f$.
    See also the above examples, Section~\ref{sec:experiments}, and Appendix~\ref{sec:experiments-app}.
\end{remark}

The extended descent lemma immediately implies primal progress of FW algorithms under the $L$-smad property as a straightforward generalization of the $L$-smooth case. 

\begin{lemma}[Primal progress from the smooth adaptable property]\label{lemma:primal-progress}
    Let the pair of functions $(f, \phi)$ be $L$-smad with a strictly convex function $\phi\in\G(C)$ and let $x_+ = (1 - \gamma)x + \gamma v$ with $x, v \in \interior\dom\phi$ and $\gamma\in[0,1]$. Then it holds:
    \begin{align*}
        f(x) - f(x_+) \geq \gamma\langle\nabla f(x), x - v\rangle - L\gamma^{1+\nu}D_\phi(v,x).
    \end{align*}
\end{lemma}
\begin{proof}
Using Lemma~\ref{lemma:extended-descent} and substituting $x_+$ for $x$ and $x$ for $y$, we have
\begin{align*}
    f(x) - f(x_+) &\geq \gamma \langle\nabla f(x), x - v\rangle - L D_\phi(x_+, x).
\end{align*}
It holds that $D_\phi(x_+, x) \leq \gamma^{1+\nu} D_\phi(v,x)$ for some $\nu>0$ by Lemma~\ref{lemma:bregman-nu}.
Therefore, this provides the desired inequality.
\end{proof}

We also have a progress lemma from the Bregman short step-size.
\begin{lemma}[Progress lemma from the Bregman short step-size]\label{lemma:progress-bregman}
    Suppose that Assumption~\ref{assumption:setting} holds.
    Define $x_{t+1} = (1 - \gamma_t)x_t + \gamma_t v_t$.
    Then if $x_{t+1}\in\dom f$,
    \begin{align*}
        f(x_t) - f(x_{t+1}) \geq \frac{\nu}{1 + \nu}\gamma_t\langle\nabla f(x_t),x_t - v_t\rangle\quad\text{for}\quad 0 \leq \gamma_t \leq \left(\frac{\langle\nabla f(x_t),x_t - v_t\rangle}{L(1+\nu)D_\phi(v_t,x_t)}\right)^{\frac{1}{\nu}}.
    \end{align*}
\end{lemma}
\begin{proof}
Substituting $x_{t+1} = (1 - \gamma_t)x_t + \gamma_t v_t$ with $x_t,v_t\in P$ for $x_+$ in Lemma~\ref{lemma:primal-progress}, we have
\begin{align*}
    f(x_t) - f(x_{t+1}) &\geq \gamma_t\langle\nabla f(x_t), x_t - v_t\rangle - L\gamma_t^{1+\nu}D_\phi(v_t,x_t)\\
    &\geq \gamma_t\langle\nabla f(x_t),x_t - v_t\rangle - \frac{\gamma_t}{1 + \nu}\langle\nabla f(x_t),x_t - v_t\rangle\\
    &=\frac{\nu}{1 + \nu}\gamma_t\langle\nabla f(x_t),x_t - v_t\rangle,
\end{align*}
where the second inequality holds because of an upper bound of $\gamma_t$.
\end{proof}

\subsection{Proof of Theorem~\ref{theorem:complexity-line-search}}\label{subsec:proof-complexity-line-search}
\begin{proof}[Proof of Theorem~\ref{theorem:complexity-line-search}]
    By following the same argument as~\cite[Theorem 1]{Pedregosa2020-ay}, we have a bound on estimating $L$ as $(1 - \log\eta/\log\tau)(t+1) + \max\{\log(\tau L/L_{-1}), 0\}/\log\tau$. Let $n_i$ be the number of evaluations needed to estimate $\nu$. We have $\nu = \beta^{n_i - 1}$, which provides $n_i = 1 + \log\nu/\log\beta$. Therefore, we obtain $n_t \leq \max\{(1 - \log\eta/\log\tau)(t+1) + \max\{\log(\tau L/L_{-1}), 0\}/\log\tau, (1 + \log\nu/\log\beta)(t+1)\}$
\end{proof}

\section{Properties from Sharpness}
\subsection{Results from Weak Convexity}
A function $f$ is $\rho$-weakly convex and differentiable if and only if, for all $x, y \in \dom f$,
\begin{align*}
    f(y) - f(x) \geq \langle\nabla f(x), y - x\rangle - \frac{\rho}{2}\|y - x\|^2.
\end{align*}

Any $\C^2$ function defined on a compact set is weakly convex \cite[Proposition 4.11]{Vial1983-oo} and \cite{Wu2007-dt}. 
Since the proof of this fact is omitted therein, we provide the argument for the sake of completeness below.

\begin{proposition}\label{prop:twice-cont-diff-weakly-conv}
    Let $P \subset\R^n$ be a compact set and $f$ be a proper and $\C^2$ function on $P$.
    Then, $f$ is weakly convex on $P$.
\end{proposition}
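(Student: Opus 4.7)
The plan is to apply the second-order characterization of convexity to a quadratic perturbation of $f$. Since $f$ is $\C^2$, the Hessian $\nabla^2 f$ is continuous, so the Rayleigh quotient $(x, u) \mapsto u^\top \nabla^2 f(x)\, u$ is continuous on the compact set $P \times \{u \in \R^n : \|u\| = 1\}$ and therefore attains its minimum. Call this minimum $-\rho_0$ and set $\rho := \max\{0, \rho_0\}$. Then for every $x \in P$ and every $u \in \R^n$ one has $u^\top(\nabla^2 f(x) + \rho I)\,u \geq 0$, so $\nabla^2 f(x) + \rho I \succeq 0$ throughout $P$.

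Next, I would define $g(x) := f(x) + \tfrac{\rho}{2}\|x\|^2$, whose Hessian satisfies $\nabla^2 g(x) = \nabla^2 f(x) + \rho I \succeq 0$ on $P$. For any $x, y \in P$, introducing $h(t) := g((1-t)x + t y)$ for $t \in [0,1]$, the chain rule gives $h''(t) = (y-x)^\top \nabla^2 g((1-t)x + t y)(y-x) \geq 0$, so $h$ is convex on $[0,1]$, which yields $g((1-t)x + t y) \leq (1-t) g(x) + t\, g(y)$. Hence $g$ is convex on $P$, and therefore $f = g - \tfrac{\rho}{2}\|\cdot\|^2$ is $\rho$-weakly convex on $P$.

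The only delicate point is that the inequality defining weak convexity involves convex combinations of $x$ and $y$, which requires the segment $[x, y]$ to lie in the set where $f$ is $\C^2$. In the paper's setting, $P$ is compact and convex (Assumption~\ref{assumption:setting}(\ref{assumption:P})), so this is automatic. If $P$ were merely compact, one would instead pass to its closed convex hull (still compact) and invoke a $\C^2$ extension of $f$ to an open neighborhood; the rest of the argument is unchanged. Beyond this minor bookkeeping, there is no real obstacle: the proof reduces to the standard fact that a continuous Hessian is bounded on a compact set, combined with the second-order characterization of convexity.
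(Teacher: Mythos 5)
Your proof is correct and follows essentially the same route as the paper: bound the smallest eigenvalue of $\nabla^2 f$ over the compact set $P$ (you via the Rayleigh quotient, the paper via continuity of $\lambda_{\min}(\nabla^2 f(\cdot))$), add $\frac{\rho}{2}\|\cdot\|^2$ to make the Hessian positive semidefinite, and conclude convexity of the perturbed function. Your explicit remark that the argument needs $P$ convex (or its convex hull plus a $\C^2$ extension) is a point the paper's proof glosses over, but it does not change the substance of the argument.
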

\begin{proof}
Let $g = \lambda_{\min}(\nabla^2 f(\cdot))$, where $\lambda_{\min}(\cdot)$ is the smallest eigenvalue.
The function $g$ is continuous because $\lambda_{\min}(\cdot)$ and $\nabla^2 f(\cdot)$ are continuous.
Therefore, there exists the minimum value of $g$ on $P$ due to the continuity of $g$ and the compactness of $P$.
For $\rho = |\min_{x \in P} g(x)|$, we have $\nabla^2 f(x) + \rho I \succeq O$ for any $x \in P$, which implies $f + \frac{\rho}{2}\|\cdot\|^2$ is convex on $P$, \ie, $f$ is $\rho$-weakly convex on $P$.
\end{proof}

Minimizing a nonconvex $\C^2$-function over a compact set $P$ is equivalent to minimizing a weakly convex function over $P$.
However, it is important to note that when $\rho$ becomes large, the assumptions of algorithms may not be satisfied (see also Theorems~\ref{theorem:local-sublinear-convergence-weakly-convex} and~\ref{theorem:convergence-rate-away-step-weakly-convex}).

We have the key lemma for convergence analysis from weak convexity.
\begin{lemma}[Primal gap, dual gap, and Frank--Wolfe gap for weakly convex $f$]\label{lemma:primal-gap-Frank-Wolfe-gap-weakly}
    Suppose that Assumptions~\ref{assumption:setting}(\ref{assumption:f}),~(\ref{assumption:P}), and that $f$ is $\rho$-weakly convex. Then for all $x \in P$, it holds:
    \begin{align}
        f(x) - f^* \leq \langle\nabla f(x), x - x^*\rangle + \frac{\rho}{2}\|x - x^*\|^2\leq \max_{v \in P}\langle\nabla f(x), x - v\rangle + \frac{\rho}{2}\|x - x^*\|^2.\label{ineq:primal-gap-Frank-Wolfe-gap-weakly}
    \end{align}
\end{lemma}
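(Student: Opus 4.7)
The plan is to reduce Lemma~\ref{lemma:primal-gap-Frank-Wolfe-gap-weakly} to the convex case of Lemma~\ref{lemma:primal-gap-Frank-Wolfe-gap} by shifting $f$ with a quadratic. Define the auxiliary function $g(y) := f(y) + \frac{\rho}{2}\|y\|^2$, which is convex by the definition of $\rho$-weak convexity, and continuously differentiable with $\nabla g(y) = \nabla f(y) + \rho y$. Applying the standard convex first-order inequality at $x$ with test point $x^*$ gives
\begin{align*}
g(x) - g(x^*) \leq \langle \nabla g(x), x - x^*\rangle = \langle \nabla f(x), x - x^*\rangle + \rho\langle x, x - x^*\rangle.
\end{align*}

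Next, I would substitute back $g(y) = f(y) + \frac{\rho}{2}\|y\|^2$ and collect the quadratic terms. The left-hand side becomes $f(x) - f^* + \frac{\rho}{2}(\|x\|^2 - \|x^*\|^2)$, and on the right-hand side the identity $\rho\langle x, x-x^*\rangle = \frac{\rho}{2}\|x\|^2 - \frac{\rho}{2}\|x^*\|^2 + \frac{\rho}{2}\|x - x^*\|^2$ (which is just a rearrangement of the polarization formula) lets the $\frac{\rho}{2}\|x\|^2$ terms cancel and leaves exactly $\frac{\rho}{2}\|x - x^*\|^2$ as the extra term. This yields the first inequality of \eqref{ineq:primal-gap-Frank-Wolfe-gap-weakly}.

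For the second inequality, I would simply note that $x^* \in P$ by Assumption~\ref{assumption:setting}(\ref{assumption:P}) and the definition of $x^*$, so
\begin{align*}
\langle \nabla f(x), x - x^*\rangle \leq \max_{v \in P}\langle \nabla f(x), x - v\rangle,
\end{align*}
and adding the nonnegative quantity $\frac{\rho}{2}\|x - x^*\|^2$ to both sides gives the desired bound.

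There is no real obstacle here; the only point that requires mild care is the algebraic cancellation in the second step, which relies on the polarization-style identity rather than anything deeper. In particular, no extra regularity of $f$ beyond $\C^1$ on $C \supset P$ is needed, since weak convexity is preserved on $P$ and we only use first-order information at $x$ and $x^* \in P$.
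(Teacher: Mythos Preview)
Your proof is correct and essentially matches the paper's argument: the first inequality is exactly the first-order characterization of $\rho$-weak convexity (which you derive by passing through the convex shift $g = f + \tfrac{\rho}{2}\|\cdot\|^2$ and unwinding the algebra), and the second inequality is just the observation that $x^* \in P$ is a feasible choice in the maximum. The paper's proof states these two facts in one line without the intermediate algebra, so your write-up is simply a more explicit version of the same idea.
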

\begin{proof}
The first inequality follows from the weak convexity of $f$ and the second follows from the maximality of $\langle\nabla f(x), x - v\rangle$.
\end{proof}
\subsection{Primal Gap Bound}
We show in the following lemma that the HEB condition immediately provides a bound on the primal optimality gap, which will be useful for establishing convergence rates of the proposed algorithms as well as the \L{}ojasiewicz inequality~\citep{Bolte2007-xr,Lojasiewicz1963-sm,Lojasiewicz1965-ks}, provided $f$ is convex.

\begin{lemma}[Primal gap bound from H\"{o}lder error bound]\label{lemma:HEB-primal-gap}
    Let $f$ be a convex function and satisfy the HEB condition with $q \geq 1$ and $\mu > 0$.
    Let $x^*$ be the unique minimizer of $f$ over $P$.
    Then, the following argument holds in general, for all $x \in P$:
    \begin{align}
        f(x) - f^* \leq \left(\frac{q}{\mu}\right)^{\frac{1}{q - 1}}\left(\frac{\langle \nabla f(x), x - x^* \rangle}{\|x - x^*\|}\right)^{\frac{q}{q - 1}}, \label{ineq:primal-gap-loja}
    \end{align}
    or equivalently,
    \begin{align*}
        \left(\frac{\mu}{q}\right)^{1/q}(f(x) - f^*)^{1 - 1/q} \leq \frac{\langle \nabla f(x), x - x^* \rangle}{\|x - x^*\|}.
    \end{align*}
\end{lemma}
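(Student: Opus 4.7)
The plan is to combine two ingredients that are already available: the convexity-based upper bound $f(x) - f^* \leq \langle \nabla f(x), x - x^*\rangle$ (which is the first inequality in Lemma~\ref{lemma:primal-gap-Frank-Wolfe-gap}), and the HEB condition $\dist(x,\X^*)^q \leq (q/\mu)(f(x) - f^*)$, specialized here to $\|x-x^*\|$ because $x^*$ is the unique minimizer. The target inequality is essentially a reparametrization of these two facts, so the whole proof should reduce to an algebraic rearrangement after multiplying and dividing by $\|x-x^*\|$.

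First I would dispose of the trivial case $x = x^*$, where both sides of~\eqref{ineq:primal-gap-loja} vanish. Assuming $x \neq x^*$, the convexity bound can be written as
\begin{align*}
    f(x) - f^* \;\leq\; \frac{\langle \nabla f(x), x - x^*\rangle}{\|x-x^*\|} \, \|x-x^*\|.
\end{align*}
Next, I would take the $q$-th root of the HEB condition to get $\|x-x^*\| \leq (q/\mu)^{1/q} (f(x)-f^*)^{1/q}$, and substitute this bound for $\|x-x^*\|$ on the right-hand side above. This yields
\begin{align*}
    f(x) - f^* \;\leq\; \left(\frac{q}{\mu}\right)^{1/q}\!\frac{\langle \nabla f(x), x - x^*\rangle}{\|x-x^*\|}\,(f(x)-f^*)^{1/q}.
\end{align*}
Rearranging so that all powers of $f(x)-f^*$ appear on the left produces the equivalent form
\begin{align*}
    \left(\frac{\mu}{q}\right)^{1/q}(f(x)-f^*)^{1-1/q} \;\leq\; \frac{\langle \nabla f(x), x - x^*\rangle}{\|x-x^*\|},
\end{align*}
and raising both sides to the power $q/(q-1)$ (valid for $q>1$) gives~\eqref{ineq:primal-gap-loja}.

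There is no substantive obstacle here, but two small points I would flag. First, the manipulation in the last step requires $q > 1$; when $q = 1$, the HEB condition combined with convexity already yields $(\mu/q)(f(x)-f^*) \leq \langle \nabla f(x), x-x^*\rangle/\|x-x^*\| \cdot (f(x)-f^*)$ after cancellation, which is trivial, and the exponent $q/(q-1)$ in~\eqref{ineq:primal-gap-loja} is formally infinite, so the boundary case should be stated as a limiting convention or excluded. Second, dividing by $(f(x)-f^*)^{1/q}$ requires $f(x) > f^*$, which is automatic in the non-trivial case since $x^*$ is the unique minimizer and $x \neq x^*$ forces $f(x) > f^*$ via the HEB condition itself.
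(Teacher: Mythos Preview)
Your proposal is correct and follows essentially the same approach as the paper: apply convexity to get $f(x)-f^* \leq \langle\nabla f(x),x-x^*\rangle$, multiply and divide by $\|x-x^*\|$, then replace $\|x-x^*\|$ via the $q$-th root of the HEB condition and rearrange. Your explicit handling of the edge cases $x=x^*$ and $q=1$ is in fact more careful than the paper, which omits these boundary considerations.
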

    \begin{proof}
        By first applying convexity and then the HEB condition for any $x^* \in \X^*$ with $f^* = f(x^*)$ it holds:
        \begin{align*}
            f(x) - f^*
            &\leq \langle \nabla f(x), x - x^* \rangle \\
            & = \frac{\langle \nabla f(x), x - x^* \rangle}{\|x - x^*\|} \|x - x^*\|\\
            & \leq \frac{\langle \nabla f(x), x - x^* \rangle}{\|x - x^*\|} \left(\frac{q}{\mu}(f(x) - f^*)\right)^{1/q},
        \end{align*}
        which implies
        \begin{align*}
            \left(\frac{\mu}{q}\right)^{1/q}(f(x) - f^*)^{1 - 1/q} \leq \frac{\langle \nabla f(x), x - x^* \rangle}{\|x - x^*\|},
        \end{align*}
        or equivalently
        \begin{align*}
            f(x) - f^* \leq \left(\frac{q}{\mu}\right)^{\frac{1}{q - 1}}\left(\frac{\langle \nabla f(x), x - x^* \rangle}{\|x - x^*\|}\right)^{\frac{q}{q - 1}}.
        \end{align*}        
    \end{proof}

\begin{remark}\label{remark:loja-growth-loja-ineq}
    With the remark from above, we can immediately relate the HEB condition to the \L{}ojasiewicz inequality~\citep{Bolte2007-xr,Lojasiewicz1963-sm,Lojasiewicz1965-ks}; the \L{}ojasiewicz inequality is used to establish convergence analysis~\citep{Bolte2007-xr}.
    To this end, using~\Eqref{ineq:primal-gap-loja}, we estimate $\frac{\langle \nabla f(x), x - x^* \rangle}{\|x - x^*\|} \leq \frac{\|\nabla f(x)\| \|x - x^*\|}{\|x - x^*\|} =  \|\nabla f(x)\|$ to then obtain the weaker condition:
    \begin{align}
        f(x) - f^* \leq \left(\frac{q}{\mu}\right)^{\frac{1}{q - 1}}\|\nabla f(x)\|^{\frac{q}{q - 1}} = c^{\frac{1}{\theta}}\|\nabla f(x)\|^{\frac{1}{\theta}}, \label{ineq:loja-inequality}
    \end{align}
    where $c = \left(\frac{q}{\mu}\right)^{\frac{1}{q}}$ and $\theta = \frac{q - 1}{q}$. Inequality~\Eqref{ineq:loja-inequality} is called the $c$-\L{}ojasiewicz inequality with $\theta \in [0,1)$.
    If $\X^* \subseteq \relint P$, then the two conditions are equivalent. However, if the optimal solution(s) are on the boundary of $P$ as is not infrequently the case, then the two conditions \emph{are not} equivalent as $\|\nabla f(x)\|$ might not vanish for $x \in \X^*$, whereas $\langle \nabla f(x), x - x^* \rangle$ does, \ie, the HEB condition is tighter than the one induced by the \L{}ojasiewicz inequality.
\end{remark}

The next lemma shows that we can also obtain primal gap bounds in the weakly convex case together with (local) quadratic growth.

\begin{lemma}[Primal gap bound from the quadratic growth]\label{lemma-ap:qg-primal-gap}
    Let $f$ be a $\rho$-weakly convex function that satisfies the local $\mu$-quadratic growth condition such that $\rho < \mu$.
    Let $x^*$ be the unique minimizer of $f$ over $P$ and let $\zeta > 0$.
    Then, the following holds: for all $x \in [f \leq f^* + \zeta]\cap P$,
    \begin{align}
        f(x) - f^* \leq\frac{2\mu}{(\mu - \rho)^2}\left(\frac{\langle \nabla f(x), x - x^* \rangle}{\|x - x^*\|}\right)^2, \label{ineq:primal-gap-quad}
    \end{align}
    or equivalently,
    \begin{align*}
        \left(\frac{\mu}{2}\right)^{1/2}\left(1 - \frac{\rho}{\mu}\right)(f(x) - f^*)^{1/2} \leq \frac{\langle \nabla f(x), x - x^* \rangle}{\|x - x^*\|}.
    \end{align*}
\end{lemma}
\begin{proof}
    By first applying weak convexity and then the local quadratic growth condition for $x^* \in \X^*$ with $f(x^*) = f^*$ and for all $x \in [f \leq f^* + \zeta]\cap P$, it holds:
    \begin{align*}
        f(x) - f^* & = f(x) - f(x^*)\\
        &\leq \langle \nabla f(x), x - x^* \rangle +\frac{\rho}{2}\|x - x^*\|^2\\
        &\leq \langle \nabla f(x), x - x^* \rangle +\frac{\rho}{\mu}\left(f(x) - f^*\right),
    \end{align*}
    which implies
    \begin{align}
        \left(1 - \frac{\rho}{\mu}\right)\left(f(x) - f^*\right) \leq \langle \nabla f(x), x - x^* \rangle = \frac{\langle \nabla f(x), x - x^* \rangle}{\|x - x^*\|} \|x - x^*\|.\label{ineq:weakly-quadratic-growth}
    \end{align}
    Using the local quadratic growth condition again, we have
    \begin{align*}
        \left(1 - \frac{\rho}{\mu}\right)\left(f(x) - f^*\right) \leq \frac{\langle \nabla f(x), x - x^* \rangle}{\|x - x^*\|} \left(\frac{2}{\mu}(f(x) - f^*)\right)^{1/2},
    \end{align*}
    which provides
    \begin{align*}
        \left(\frac{\mu}{2}\right)^{1/2}\left(1 - \frac{\rho}{\mu}\right)(f(x) - f^*)^{1/2} \leq \frac{\langle \nabla f(x), x - x^* \rangle}{\|x - x^*\|},
    \end{align*}
    or equivalently
    \begin{align*}
        f(x) - f^* \leq\frac{2\mu}{(\mu - \rho)^2}\left(\frac{\langle \nabla f(x), x - x^* \rangle}{\|x - x^*\|}\right)^2.
    \end{align*}        
\end{proof}
\begin{remark}\label{remark:quad-growth-pl-ineq}
    In the same vein as the discussion in Remark~\ref{remark:loja-growth-loja-ineq}, we can immediately relate the local $\mu$-quadratic growth condition with $\zeta > 0$ to the local Polyak--\L{}ojasiewicz (PL) inequality.
    Using~\Eqref{ineq:primal-gap-quad} in Lemma~\ref{lemma-ap:qg-primal-gap}, for all $x \in [f \leq f^* + \zeta]\cap P$, we have
    \begin{align}
        f(x) - f^* \leq\frac{2\mu}{(\mu - \rho)^2}\|\nabla f(x)\|^2, \label{ineq:pl}
    \end{align}
    which is equivalent to the PL inequality~\citep{Polyak1963-dr}, also called the gradient dominance property~\citep{Combettes2020-jj}, with $c = \frac{\sqrt{2\mu}}{\mu - \rho}$.
    The PL inequality is also equivalent to the \L{}ojasiewicz inequality with $\theta = \frac{1}{2}$.
    Strongly convex functions satisfy the PL inequality~\cite[Lemma 2.13]{Braun2025-sz}.
    Note that we will not have the primal gap bound under the HEB condition and weak convexity because an inequality like~\Eqref{ineq:weakly-quadratic-growth} does not follow from the HEB condition.
\end{remark}

\subsection{Scaling Inequalities, Geometric Hölder Error Bounds, and Contractions}

In the following, we will now bring things together to derive tools that will be helpful in establishing convergence rates.

\subsubsection*{Scaling inequalities} Scaling inequalities are a key tool in establishing convergence rates of FW algorithms. We will introduce two such inequalities that we will use in the following. The first scaling inequality is useful for analyzing the case where the optimal solution lies in the relative interior of the feasible region. While its formulation in \cite[Proposition 2.16]{Braun2025-sz} required $L$-smoothness of $f$, it is actually not used in the proof, and the results hold more broadly. We restate it here for the sake of completeness.

\begin{proposition}[Scaling inequality from convexity when $x^* \in \interior P$~{\cite[Proposition 2.16]{Braun2025-sz}}]\label{prop:scaling-cond-bregman}
    Let $P\subset \R^n$ be a nonempty compact convex set.
    Let $f:\R^n\to(-\infty,+\infty]$ be $\C^1$ and convex on $P$.
    If there exists $r > 0$ so that $B(x^*, r) \subset P$ for a minimizer $x^*$ of $f$, then for all $x\in P$, we have
    \begin{align*}
        \langle\nabla f(x), x - v\rangle \geq  r\|\nabla f(x)\| \geq\frac{r\langle\nabla f(x), x - x^*\rangle}{\|x - x^*\|},
    \end{align*}
    where $v \in \argmax_{u \in P}\langle\nabla f(x), x - u\rangle$.
\end{proposition}

When $f$ is not convex, assuming that $f$ is weakly convex and local quadratic growth, we have the scaling inequality for a nonconvex objective function.

\begin{proposition}[Scaling inequality from weak convexity when $x^* \in \interior P$]\label{prop:scaling-cond-bregman-weak-convex}
    Let $P\subset \R^n$ be a nonempty compact convex set.
    Let $f:\R^n\to(-\infty,+\infty]$ be $\C^1$, $\rho$-weakly convex, local $\mu$-quadratic growth with $\zeta > 0$ on $P$.
    If there exists $r > 0$ so that $B(x^*, r) \subset P$ for a minimizer $x^*$ of $f$ and $\rho \leq \mu$, then for all $x\in [f\leq f^* + \zeta]\cap P$, we have
    \begin{align*}
        \langle\nabla f(x), x - v\rangle \geq  r\|\nabla f(x)\| \geq\frac{r\langle\nabla f(x), x - x^*\rangle}{\|x - x^*\|},
    \end{align*}
    where $v \in \argmax_{u \in P}\langle\nabla f(x), x - u\rangle$.
\end{proposition}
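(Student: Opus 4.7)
The structure of the claim is identical to Proposition~\ref{prop:scaling-cond-bregman}, so I would mimic that proof, but use weak convexity plus local quadratic growth in place of convexity to recover the key nonnegativity estimate. The plan is to establish the two inequalities separately.

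\emph{First inequality.} The right-hand inequality $r\|\nabla f(x)\| \geq r\langle\nabla f(x), x - x^*\rangle/\|x - x^*\|$ is just Cauchy--Schwarz, so the only real work is the left-hand one. If $\nabla f(x) = 0$, both inequalities are trivial, so I may assume $\nabla f(x)\neq 0$. Since $B(x^*, r)\subset P$, the point $u^\star := x^* - r\nabla f(x)/\|\nabla f(x)\|$ lies in $P$, and hence
\begin{align*}
\langle\nabla f(x), x - v\rangle = \max_{u \in P}\langle\nabla f(x), x - u\rangle \geq \langle\nabla f(x), x - u^\star\rangle = \langle\nabla f(x), x - x^*\rangle + r\|\nabla f(x)\|.
\end{align*}
Thus the first inequality reduces to showing $\langle\nabla f(x), x - x^*\rangle \geq 0$ on $[f\leq f^* + \zeta]\cap P$.

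\emph{Nonnegativity from weak convexity and local quadratic growth.} This is the only place where the convexity assumption of Proposition~\ref{prop:scaling-cond-bregman} is used nontrivially, and it is where I would replace the old argument with the estimate that appeared inside the proof of Lemma~\ref{lemma-ap:qg-primal-gap}. Applying $\rho$-weak convexity at $x^*$ gives $f(x^*) \geq f(x) + \langle\nabla f(x), x^* - x\rangle - \tfrac{\rho}{2}\|x - x^*\|^2$, and the local $\mu$-quadratic growth condition on $[f\leq f^* + \zeta]\cap P$ gives $\tfrac{\mu}{2}\|x - x^*\|^2 \leq f(x) - f^*$. Combining,
\begin{align*}
\left(1 - \frac{\rho}{\mu}\right)(f(x) - f^*) \leq \langle\nabla f(x), x - x^*\rangle.
\end{align*}
Since $\rho \leq \mu$ and $f(x) \geq f^*$, the left-hand side is nonnegative, which delivers $\langle\nabla f(x), x - x^*\rangle \geq 0$ and completes the first inequality.

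\emph{Where the work is.} The argument is essentially a direct transplantation of Proposition~\ref{prop:scaling-cond-bregman}, the only obstacle being to justify the sign condition $\langle\nabla f(x), x - x^*\rangle \geq 0$ in the absence of full convexity. The local quadratic growth assumption with $\zeta > 0$ is exactly what is needed to salvage this step, and the restriction to the sublevel set $[f\leq f^* + \zeta]\cap P$ as well as the hypothesis $\rho \leq \mu$ are used precisely and only in this one estimate. Everything else is linear-algebraic and geometric, inherited verbatim from the convex proof.
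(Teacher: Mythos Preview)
Your proposal is correct and follows essentially the same approach as the paper: both proofs pick the feasible point $x^* - r\nabla f(x)/\|\nabla f(x)\|$ to bound the Frank--Wolfe gap from below by $\langle\nabla f(x), x - x^*\rangle + r\|\nabla f(x)\|$, use weak convexity plus local quadratic growth to obtain $(1-\rho/\mu)(f(x)-f^*)\leq\langle\nabla f(x), x - x^*\rangle$ and hence nonnegativity of this term, and then conclude the second inequality via Cauchy--Schwarz. Your explicit handling of the case $\nabla f(x)=0$ is a minor nicety the paper omits, but otherwise the arguments are identical.
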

\begin{proof}
    We consider $x^* - rz$, where $z$ is a point with $\|z\| = 1$ and $\langle\nabla f(x), z\rangle = \|\nabla f(x)\|$.
    It holds that
    \begin{align}
        \langle\nabla f(x), v\rangle \leq \langle \nabla f(x), x^* - rz\rangle = \langle\nabla f(x), x^*\rangle - r\|\nabla f(x)\|. \label{ineq:gradient-inequality-ball}
    \end{align}
    From weak convexity, for all $x\in [f\leq f^* + \zeta]\cap P$, we have
    \begin{align*}
        f^* - f(x) &\geq \langle\nabla f(x), x^* - x\rangle - \frac{\rho}{2}\|x - x^*\|^2\\
        &\geq \langle\nabla f(x), x^* - x\rangle - \frac{\rho}{\mu}(f(x) - f^*),
    \end{align*}
    where the last inequality holds due to the local quadratic growth condition.
    Because of $\rho \leq \mu$, this inequality implies
    \begin{align*}
        \langle\nabla f(x), x - x^*\rangle \geq \left(1 - \frac{\rho}{\mu}\right)(f(x) - f^*) \geq 0.
    \end{align*}
    By rearranging~\Eqref{ineq:gradient-inequality-ball} and using $\langle\nabla f(x), x - x^*\rangle \geq 0$, we obtain
    \begin{align*}
        \langle\nabla f(x), x - v\rangle \geq \langle\nabla f(x), x - x^*\rangle + r\|\nabla f(x)\|\geq r\|\nabla f(x)\|.
    \end{align*}
    In addition, it holds that
    \begin{align*}
        \frac{r\langle\nabla f(x), x - x^*\rangle}{\|x - x^*\|} \leq \frac{r\|\nabla f(x)\| \|x - x^*\|}{\|x - x^*\|} = r\|\nabla f(x)\|,
    \end{align*}
    where the first inequality holds because of the Cauchy--Schwarz inequality.
\end{proof}

\cite{Lacoste-Julien2015-gb} defined a geometric distance-like constant of a polytope, known as the 
\emph{pyramidal width}, to analyze the convergence of the away-step Frank--Wolfe algorithm (and other variants that use the pyramidal width) over polytopes.
It can be interpreted as the minimal $\delta > 0$ satisfying the following scaling inequality~\Eqref{ineq:scaling-inequality-pyramidal}, which plays a central role in establishing convergence rates for the away-step FW algorithm; see \cite{Braun2025-sz} for an in-depth discussion.

\begin{lemma}[Scaling inequality via pyramidal width~{\cite[Theorem 2.26]{Braun2025-sz},~\cite[Theorem 3]{Lacoste-Julien2015-gb}}]\label{lemma:scaling-inequality-pyramidal}
    Let $P\subset\R^n$ be a polytope and let $\delta$ denote the pyramidal width of $P$. 
    Let $x\in P$, and let $\Scal$ denote any set of vertices of $P$ with $x\in\conv\Scal$, where $\conv\Scal$ denotes the convex hull of $\Scal$.
    Let $\psi$ be any vector, so that we define $v^{\FW} = \argmin_{v \in P}\langle\psi, v\rangle$ and $v^{\away} = \argmax_{v \in \Scal}\langle\psi, v\rangle$.
    Then for any $y\in P$
    \begin{align}
        \langle\psi, v^{\away} - v^{\FW}\rangle\geq\delta\frac{\langle\psi, x - y\rangle}{\|x - y\|}.\label{ineq:scaling-inequality-pyramidal}
    \end{align}
\end{lemma}

Note that Lemma~\ref{lemma:scaling-inequality-pyramidal} does not require the convexity of $f$, and we will use it for nonconvex optimization.
When $\phi$ is $\sigma$-strongly convex and $\langle\psi, x - y\rangle \geq 0$, Lemma~\ref{lemma:scaling-inequality-pyramidal} implies
\begin{align*}
    \langle\psi, v^{\away} - v^{\FW}\rangle^2\geq\delta^2\frac{\langle\psi, x - y\rangle^2}{\|x - y\|^2} \geq \delta^2 \sigma \frac{\langle\psi, x - y\rangle^2}{2D_\phi(x, y)}.
\end{align*}
Note that the last inequality of the above also holds for $\delta^2 \sigma\frac{\langle\psi, x - y\rangle^2}{2D_\phi(y, x)}$, \ie, with $x$ and $y$ swapped in the divergence. 

\subsubsection*{Geometric H\"{o}lder Error Bound Condition}

We will now introduce the more compact notion of geometric H\"{o}lder error bound condition, which simply combines the pyramidal width and the HEB condition of the function $f$; see also \cite[Lemma 2.27]{Braun2025-sz} for details when $q = 2$.

\begin{lemma}[Geometric H\"{o}lder error bound]\label{lem:geoSC}
  Let \(P\) be a polytope with pyramidal width \(\delta > 0\) and let \(f\) be a convex function and satisfy the HEB condition with $v^{\FW} = \argmin_{v \in P} \innp{\nabla f(x),v}$ and $v^{\away} = \argmax_{v \in \mathcal{S}} \innp{\nabla f(x),v}$ with $\mathcal{S} \subseteq \vertex{P}$, so that $x \in \conv{\mathcal{S}}$, we have
  \begin{equation*}
    f(x) - f^*
    \leq
    \left(\frac{q}{\mu}\right)^{\frac{1}{q - 1}}\left(\frac{\langle \nabla f(x), v^{\away} - v^{\FW} \rangle}{\delta}\right)^{\frac{q}{q - 1}}.
\end{equation*}
\end{lemma}
\begin{proof}
    Combining~\Eqref{ineq:scaling-inequality-pyramidal} with Lemma~\ref{lemma:HEB-primal-gap} for $\psi = \nabla f(x)$, we have
    \begin{align*}
        f(x) - f^* \leq \left(\frac{q}{\mu}\right)^{\frac{1}{q - 1}}\left(\frac{\langle \nabla f(x), x - x^* \rangle}{\|x - x^*\|}\right)^{\frac{q}{q - 1}} \leq \left(\frac{q}{\mu}\right)^{\frac{1}{q - 1}}\left(\frac{\langle \nabla f(x), v^{\away} - v^{\FW} \rangle}{\delta}\right)^{\frac{q}{q - 1}}.
    \end{align*}
\end{proof}

\subsubsection*{From Contractions to Convergence Rates}

Besides scaling inequalities and the geometric Hölder error bound, we also utilize the following lemma for convex and nonconvex optimization, which allows us to turn a contraction into a convergence rate.

\begin{lemma}[From contractions to convergence rates~{\cite[Lemma 2.21]{Braun2025-sz}}]\label{lemma:contraction-convergence-rate}
    Let $\{h_t\}_t$ be a decreasing sequence of positive numbers and $c_0$, $c_1$, $c_2$, $\theta_0$ be positive numbers with $c_1 < 1$ such that $h_1 \leq c_0$ and $h_t - h_{t+1} \geq h_t\min\{c_1, c_2 h_t^{\theta_0}\}$ for $t \geq 1$, then
    \begin{align*}
        h_t \leq \begin{cases}
            c_0(1-c_1)^{t-1} & \text{if}\ 1 \leq t \leq t_0,\\
            \frac{(c_1/c_2)^{1/\theta_0}}{(1 + c_1\theta_0(t-t_0))^{1/\theta_0}} = \mathcal{O}(1/t^{1/\theta_0}) & \text{if}\ t \geq t_0,
        \end{cases}
    \end{align*}
    where
    \begin{align*}
        t_0 := \max\left\{\left\lfloor\log_{1-c_1}\left(\frac{(c_1/c_2)^{1/\theta_0}}{c_0}\right)\right\rfloor + 2, 1\right\}.
    \end{align*}
    In particular, we have $h_t \leq \epsilon$ if $t \geq t_0 + \frac{1}{\theta_0 c_2\epsilon^{\theta_0}} - \frac{1}{\theta_0 c_1}$ and $\epsilon\leq(c_1/c_2)^{1/\theta_0}$.
\end{lemma}

\section{Proof of Convergence Analysis for Convex Optimization}
We would also like to stress that while we formulate some of the results for the case that $x^*\in\interior P$, the results can be extended to the case that $x^*\in\relint P$, \ie, the relative interior of $P$, which we did not do for the sake of clarity. Basically, the analysis is performed in the affine space spanned by the optimal face of $P$ in that case; the interested reader is referred to~\cite{Braun2025-sz} for details on how to extend the results.
\subsection{Sublinear Convergence}
We recall a key property for analyzing convergence rates in convex optimization.
\begin{lemma}[Primal gap, dual gap, and Frank--Wolfe gap for convex $f$~{\cite[Lemma 4.1]{Pokutta2024-qu}}]\label{lemma:primal-gap-Frank-Wolfe-gap}
    Suppose that Assumptions~\ref{assumption:setting}(\ref{assumption:f}), (\ref{assumption:P}), and that $f$ is convex. Then for all $x \in P$, it holds:
    \begin{align}
        f(x) - f^* \leq \langle\nabla f(x), x - x^*\rangle \leq \max_{v\in P}\langle\nabla f(x), x - v\rangle.\label{ineq:primal-gap-Frank-Wolfe-gap}
    \end{align}
\end{lemma}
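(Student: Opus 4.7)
The plan is to prove the two inequalities separately, each by a one-line argument relying only on convexity of $f$ and feasibility of $x^*$.

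First I would establish the left inequality $f(x) - f^* \leq \langle\nabla f(x), x - x^*\rangle$. By Assumption~\ref{assumption:setting}(\ref{assumption:f}), $f$ is $\C^1$ on $C \supset P$, and by hypothesis $f$ is convex. The standard gradient inequality for a convex differentiable function then yields $f(x^*) \geq f(x) + \langle \nabla f(x), x^* - x\rangle$ for any $x \in P$. Rearranging gives the desired bound, using that $f^* = f(x^*)$.

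Next I would prove the right inequality $\langle\nabla f(x), x - x^*\rangle \leq \max_{v \in P}\langle\nabla f(x), x - v\rangle$. Since $x^* \in \X^* \subset P$ by Assumption~\ref{assumption:setting}(\ref{assumption:P}), $x^*$ is a feasible candidate in the maximization on the right; the inequality is therefore immediate from the definition of the maximum. Note that the maximum is attained because $P$ is compact and the linear functional $v \mapsto \langle\nabla f(x), x - v\rangle$ is continuous; this is precisely the quantity computed by the LMO call $v_t \leftarrow \argmin_{v \in P}\langle \nabla f(x_t), v\rangle$ in line~\ref{line:lmo} of Algorithm~\ref{alg:fw}, after a sign flip.

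There is no real obstacle here: both steps are essentially definitional once convexity and feasibility are in hand. The only thing to be careful about is domain compatibility, \ie, that $\nabla f(x)$ is well-defined for every $x \in P$, which is guaranteed by $P \subset C$ (Assumption~\ref{assumption:setting}(\ref{assumption:P})) together with $f \in \C^1(C)$ (Assumption~\ref{assumption:setting}(\ref{assumption:f})). No use of the $L$-smad property, of $\phi$, or of any growth condition is needed for this lemma.
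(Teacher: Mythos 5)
Your proof is correct and follows the same standard argument the paper relies on (and uses explicitly in the weakly convex analogue, Lemma~\ref{lemma:primal-gap-Frank-Wolfe-gap-weakly-ap}): the gradient inequality for convex $\C^1$ functions gives the first bound, and feasibility $x^* \in P$ gives the second via maximality. Your added remarks on attainment of the maximum and well-definedness of $\nabla f$ on $P$ are correct but not essential.
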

We establish a sublinear convergence rate of the FW algorithm under the smooth adaptable property.
It is a similar result to~\cite[Theorem 1]{Vyguzov2025-ha}.
We conducted its proof following \cite{Braun2025-sz,Jaggi2013-oz,Pokutta2024-qu}.
The FW algorithm uses the open loop step-size, \ie, $\gamma_t = \frac{2}{2 + t}$ in the following theorem.
\begin{theorem}[Primal convergence of the Frank--Wolfe algorithm]\label{theorem:sublinear-convex}
    Suppose that Assumptions~\ref{assumption:setting} and~\ref{assumption:convex} hold.
    Let $D := \sqrt{\sup_{x, y\in P}D_\phi(x,y)}$ be the diameter of $P$ characterized by the Bregman distance $D_\phi$ and let $\nu \in (0,1]$.
    Consider the iterates of Algorithm~\ref{alg:fw} with the open loop step-size, \ie, $\gamma_t = \frac{2}{2 + t}$.
    Then, it holds that, for all $t \geq 1$,
    \begin{align}
        f(x_t) - f^* \leq \frac{2^{1+\nu} LD^2}{(t + 2)^\nu},\label{ineq:sublinear-convex}
    \end{align}
    and hence for any accuracy $\epsilon > 0$ we have $f(x_t) - f^* \leq \epsilon$ for all $t\geq(2^{1+\nu}LD^2/\epsilon)^{1/\nu}$.
\end{theorem}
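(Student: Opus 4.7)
The plan is a standard induction on $t$, combining the primal progress lemma with the linear-minimization property of the Frank--Wolfe vertex and the open-loop step-size choice. Concretely, I would first apply Lemma~\ref{lemma:primal-progress} to obtain
\begin{align*}
    f(x_t) - f(x_{t+1}) \geq \gamma_t \langle \nabla f(x_t), x_t - v_t\rangle - L \gamma_t^{1+\nu} D_\phi(v_t, x_t).
\end{align*}
By the definition of $v_t$ and Lemma~\ref{lemma:primal-gap-Frank-Wolfe-gap} (using convexity of $f$), the inner product is at least $f(x_t) - f^*$, and by definition of $D$ we have $D_\phi(v_t,x_t) \leq D^2$. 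Writing $h_t := f(x_t) - f^*$, this yields the scalar recursion
\begin{align*}
    h_{t+1} \leq (1 - \gamma_t)\, h_t + L\, \gamma_t^{1+\nu} D^2.
\end{align*}

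The second step is to plug in $\gamma_t = 2/(t+2)$ and verify the claim $h_t \leq 2^{1+\nu} L D^2 / (t+2)^\nu$ by induction. For the base case $t=1$, I use $\gamma_0 = 1$ in the recursion to get $h_1 \leq L D^2$, and observe that $2^{1+\nu}/3^{\nu} = 2 (2/3)^\nu \geq 2\cdot(2/3) = 4/3 > 1$ for $\nu \in (0,1]$, so the bound holds at $t=1$. For the inductive step, substituting the hypothesis gives
\begin{align*}
    h_{t+1} \leq \frac{t}{t+2}\cdot\frac{2^{1+\nu} L D^2}{(t+2)^\nu} + \frac{2^{1+\nu} L D^2}{(t+2)^{1+\nu}} = \frac{2^{1+\nu} L D^2\,(t+1)}{(t+2)^{1+\nu}},
\end{align*}
so it remains to check $(t+1)(t+3)^\nu \leq (t+2)^{1+\nu}$, equivalently $(t+1)/(t+2) \leq ((t+2)/(t+3))^\nu$. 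Setting $a = (t+1)/(t+2)$ and $b = (t+2)/(t+3)$, a quick cross-multiplication shows $a \leq b < 1$, and since $\nu \in (0,1]$ we have $b^\nu \geq b \geq a$, closing the induction.

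The only mildly delicate point is that concluding inequality $(t+1)(t+3)^\nu \leq (t+2)^{1+\nu}$, which genuinely uses $\nu \leq 1$; everything else is a routine combination of the extended descent lemma and the standard open-loop telescoping trick. The final ``$\epsilon$-iteration complexity'' claim then follows by inverting $2^{1+\nu} L D^2/(t+2)^\nu \leq \epsilon$ in $t$.
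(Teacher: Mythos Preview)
Your proposal is correct and follows essentially the same approach as the paper: derive the recursion $h_{t+1}\leq(1-\gamma_t)h_t+L\gamma_t^{1+\nu}D^2$ from Lemma~\ref{lemma:primal-progress} and Lemma~\ref{lemma:primal-gap-Frank-Wolfe-gap}, use $\gamma_0=1$ for the base case $h_1\leq LD^2$, and close the induction via the inequality $(t+1)(t+3)^\nu\leq(t+2)^{1+\nu}$. Your verification of that last inequality (via $a\leq b<1$ and $b^\nu\geq b$) is actually a bit more explicit than the paper's, which simply asserts it for $0<\nu\leq1$.
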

\begin{proof}
For some $\nu\in(0,1]$, we have
\begin{align*}
    f(x_t) - f(x_{t+1}) &\geq \gamma_t\langle\nabla f(x_t), x_t - v_t\rangle - L \gamma_t^{1 + \nu} D_\phi(v_t, x_t)\\
    &\geq \gamma_t(f(x_t) - f^*) - L \gamma_t^{1 + \nu} D_\phi(v_t, x_t),
\end{align*}
where the first inequality holds because of Lemma~\ref{lemma:primal-progress} and the last inequality holds because of Lemma~\ref{lemma:primal-gap-Frank-Wolfe-gap}.
Subtracting $f^*$ on both sides, using $D_\phi(v_t, x_t) \leq D^2$, and rearranging leads to
\begin{align*}
    f(x_{t+1}) - f^* \leq (1 - \gamma_t)(f(x_t) - f^*) + L \gamma_t^{1 + \nu} D^2.
\end{align*}
When $t = 0$, it follows $f(x_1) - f^* \leq LD^2 \leq 2 LD^2$.
Now, we consider $t \geq 1$ and obtain
\begin{align*}
    f(x_{t+1}) - f^*&\leq (1 - \gamma_t)(f(x_t) - f^*) + L\gamma_t^{1+\nu} D^2\\
    &\leq \frac{t}{2+t}(f(x_t) - f^*) + \frac{2^{1+\nu}}{(2+t)^{1+\nu}} LD^2\\
    &\leq \frac{t}{2+t}\frac{2^{1+\nu} LD^2}{(2 + t)^\nu} + \frac{2^{1+\nu}}{(2+t)^{1+\nu}} LD^2\\
    &= \frac{2^{1+\nu}LD^2}{(3 + t)^\nu}\left(\frac{(3 + t)^\nu(1+t)}{(2+t)^{1+\nu}}\right) \leq \frac{2^{1+\nu}LD^2}{(3 + t)^\nu},
\end{align*}
where the last inequality holds due to $(3 + t)^\nu(1 + t)\leq(2 + t)^{1+\nu}$ with $0 < \nu \leq 1$.
\end{proof}
If $\phi = \frac{1}{2}\|\cdot\|^2$ and $\nu = 1$ in~\Eqref{ineq:sublinear-convex}, we have $f(x_t) - f^* \leq 4LD^2/(t + 2)$,
which is the same as a sublinear convergence rate of the classical FW algorithm.

\begin{remark}
    While the convergence rate~\Eqref{ineq:sublinear-convex} is the same as~\cite{Vyguzov2025-ha}, Vyguzov and Stonyakin assume that the triangle scaling property holds for $D_\phi$. In contrast, we require significantly weaker assumptions: it is enough to assume that $\phi$ is strictly convex due to Lemma~\ref{lemma:bregman-nu} in order to establish Theorem~\ref{theorem:sublinear-convex}.
\end{remark}

\subsection{Proof of Theorem~\ref{theorem:convergence-rate-inner-optima}}\label{subsec:proof-theorem:convergence-rate-inner-optima}
Because Algorithm~\ref{alg:adaptive-bregman} is well-defined (see Remark~\ref{remark:well-defined-linesearch}), the convergence result of the FW algorithm using the adaptive step-size strategy (Algorithm~\ref{alg:adaptive-bregman}) is essentially the same as the one that uses Bregman short steps (Theorems~\ref{theorem:convergence-rate-inner-optima} and~\ref{theorem:convergence-rate-inner-optima-weakly-convex}); up to small errors arising from the approximation whose precise analysis we skip for the sake of brevity. 
\begin{proof}[Proof of Theorem~\ref{theorem:convergence-rate-inner-optima}]
    Let $h_t := f(x_t) - f^*$.
    Using Lemma~\ref{lemma:progress-bregman}, we have
    \begin{align*}
        h_t - h_{t+1} = f(x_t) - f(x_{t+1}) \geq \frac{\nu}{1 + \nu} \langle\nabla f(x_t), x_t - v_t\rangle\gamma_t,
    \end{align*}
    where $\gamma_t = \min\left\{\left(\frac{\langle\nabla f(x_t), x_t - v_t\rangle}{L(1+\nu)D_\phi(v_t, x_t)}\right)^{\frac{1}{\nu}}, 1\right\}$.
    We consider two cases: (i) $\gamma_t < 1$ and (ii) $\gamma_t = 1$.

    (i) $\gamma_t < 1$:
    Using $\gamma_t = \left(\frac{\langle\nabla f(x_t), x_t - v_t\rangle}{L(1+\nu)D_\phi(v_t, x_t)}\right)^{\frac{1}{\nu}}$, we have
    \begin{align*}
        h_t - h_{t+1} & \geq\frac{\nu}{1 + \nu} \frac{\langle\nabla f(x_t), x_t - v_t\rangle^{1+1/\nu}}{(L(1+\nu)D_\phi(v_t, x_t))^{1/\nu}}\\
        &\geq \frac{\nu}{1 + \nu}\frac{r^{1+1/\nu}\|\nabla f(x_t)\|^{1+1/\nu}}{(L(1+\nu))^{1/\nu}D^{2/\nu}}\\
        &\geq \frac{\nu}{1 + \nu}\frac{r^{1+1/\nu}}{Mc^{1+1/\nu}D^{2/\nu}}h_t^{\frac{(1+\nu)(q-1)}{\nu q}},
    \end{align*}
    where the second inequality holds from $\langle f(x_t), x_t - v_t\rangle \geq r\|\nabla f(x_t)\|$ in Proposition~\ref{prop:scaling-cond-bregman}, and the last inequality holds because $f$ satisfies the $c$-\L{}ojasiewicz inequality with $c = (q /\mu)^{1/q}$ (see Remark~\ref{remark:loja-growth-loja-ineq}), and $M := (L(1+\nu))^{1/\nu}$.

    (ii) $\gamma_t = 1$: Using~\Eqref{ineq:primal-gap-Frank-Wolfe-gap}, we have 
    \begin{align*}
        h_t - h_{t+1} = f(x_t) - f(x_{t+1}) \geq\frac{\nu}{1 + \nu} \langle\nabla f(x_t), x_t - v_t\rangle \geq \frac{\nu}{1 + \nu}(f(x_t) - f^*) = \frac{\nu}{1 + \nu}h_t,
    \end{align*}
    where the second inequality holds due to the convexity of $f$.

    From (i) and (ii), we have
    \begin{align*}
        h_t - h_{t+1} \geq \min\left\{\frac{\nu}{1 + \nu}h_t, \frac{\nu}{1 + \nu}\frac{r^{1+1/\nu}}{Mc^{1+1/\nu}D^{2/\nu}}h_t^{\frac{(1+\nu)(q-1)}{\nu q}}\right\}.
    \end{align*}
    When $q = 1 + \nu$, we have
    \begin{align*}
        h_t - h_{t+1} \geq \min\left\{\frac{\nu}{1 + \nu}, \frac{\nu}{1 + \nu}\frac{r^{1+1/\nu}}{Mc^{1+1/\nu}D^{2/\nu}}\right\} \cdot h_t.
    \end{align*}
    This inequality and the initial bound $f(x_1) - f^* \leq LD^2$ due to Lemma~\ref{lemma:extended-descent} imply
    \begin{align*}
        h_t \leq \max\left\{\frac{1}{1 + \nu}, 1 - \frac{\nu}{1 + \nu}\frac{r^{1+1/\nu}}{Mc^{1+1/\nu}D^{2/\nu}}\right\}^{t-1}LD^2.
    \end{align*}
    On the other hand, when $q > 1 + \nu$, we have
    \begin{align*}
        h_t - h_{t+1} \geq \min\left\{\frac{\nu}{1 + \nu}, \frac{\nu}{1 + \nu}\frac{r^{1+1/\nu}}{Mc^{1+1/\nu}D^{2/\nu}}h_t^{\frac{q - 1 - \nu}{\nu q}}\right\} \cdot h_t.
    \end{align*}
    Using $f(x_1) - f^* \leq LD^2$ and Lemma~\ref{lemma:contraction-convergence-rate} with $c_0 = LD^2$, $c_1 = \frac{\nu}{1 + \nu}$, $c_2 = \frac{\nu}{1 + \nu}\frac{r^{1+1/\nu}}{Mc^{1+1/\nu}D^{2/\nu}}$, and $\theta_0 = \frac{q - 1 - \nu}{\nu q} > 0$ from $q > 1 + \nu$, we have the claim.
\end{proof}

\subsection{Proof of Theorem~\ref{theorem:convergence-rate-away-step-convex}}\label{subsec:proof-theorem:convergence-rate-away-step-convex}
\begin{proof}[Proof of Theorem~\ref{theorem:convergence-rate-away-step-convex}]
    By using the induced guarantee on the primal gap via Lemma~\ref{lem:geoSC}, we have
    \begin{align}
        h_t = f(x_t) - f^* \leq \left(\frac{q}{\mu}\right)^{\frac{1}{q - 1}}\left(\frac{\langle \nabla f(x_t), v_t^{\away} - v_t^{\FW} \rangle}{\delta}\right)^{\frac{q}{q - 1}}\leq\left(\frac{q}{\mu}\right)^{\frac{1}{q - 1}}\left(\frac{2\langle\nabla f(x_t), d_t\rangle}{\delta}\right)^{\frac{q}{q - 1}},\label{ineq:primal-afw-convex}
    \end{align}
    where the last inequality holds because $d_t$ is either $x_t - v_t^{\FW}$ or $v_t^{\away} - x_t$ with $\langle\nabla f(x_t), d_t\rangle\geq\langle\nabla f(x), v^{\away} - v^{\FW}\rangle/2$ in Lines~\ref{alg:fw-step} and~\ref{alg:away-step} of Algorithm~\ref{alg:afw}.
    We obtain
    \begin{align*}
        h_t - h_{t+1} &\geq \frac{\nu}{1 + \nu}\langle\nabla f(x_t), d_t\rangle\min\left\{\gamma_{t,\max}, \left(\frac{\langle\nabla f(x_t),d_t\rangle}{L(1 + \nu)D_\phi(v_t,x_t)}\right)^{\frac{1}{\nu}}\right\}\\
        &= \min\left\{\gamma_{t,\max}\frac{\nu}{1 + \nu}\langle\nabla f(x_t), d_t\rangle, \frac{\nu}{1 + \nu}\frac{\langle\nabla f(x_t),d_t\rangle^{1 + 1/\nu}}{(L(1 + \nu)D_\phi(v_t,x_t))^{1/\nu}}\right\}\\
        &\geq \min\left\{\gamma_{t,\max}\frac{\nu h_t}{1 + \nu}, \frac{\nu}{1 + \nu}\frac{((\mu/q)^{1/q}h_t^{1 - 1/q}\delta/2)^{\frac{1 + \nu}{\nu}}}{(L(1+\nu)D^2)^{1/\nu}}\right\}\\
        &= \min\left\{\frac{\nu h_t}{1 + \nu}\gamma_{t,\max}, \frac{\nu}{1 + \nu}\frac{((\mu/q)^{1/q}\delta/2)^{\frac{1 + \nu}{\nu}}}{(L(1+\nu)D^2)^{1/\nu}}h_t^{\frac{(1 + \nu)(q - 1)}{\nu q}}\right\}
    \end{align*}
    where the first inequality holds due to Lemma~\ref{lemma:progress-bregman} with $\gamma_{t,\max} = 1$ (Frank--Wolfe steps) and $\gamma_{t,\max} = \frac{\lambda_{v_t^{\away}, t}}{1 - \lambda_{v_t^{\away}, t}}$ (away steps), and the second inequality holds because of~\Eqref{ineq:primal-afw-convex} and $\langle\nabla f(x_t), d_t\rangle \geq \langle\nabla f(x_t), x_t - v_t\rangle \geq h_t$.

    In $(\nu, q) = (1,2)$, for Frank--Wolfe steps, $\gamma_{t,\max} = 1 \geq \mu\delta^2/LD^2 \geq \mu\delta^2/32LD^2$.
    For away steps, we only rely on monotone progress $h_{t+1} < h_t$ because it is difficult to estimate $\gamma_{t,\max}$ below.
    However, $\gamma_{t,\max}$ cannot be small too often, which is the key point.
    Let us consider $\gamma_t = \gamma_{t,\max}$ in an away step.
    In that case, $v^{\away}_t$ is removed from the active set $\Scal_{t+1}$ in line~\ref{line:remove-away-vertex}.
    Moreover, the active set can only grow in Frank--Wolfe steps (line~\ref{line:add-active-set}).
    It is impossible to remove more vertices from $\Scal_{t+1}$ than have been added in Frank--Wolfe steps.
    Therefore, at most half of iterations until $t$ iterations are in away steps, \ie, in all other steps, we have $\gamma_t = \frac{\langle\nabla f(x_t),d_t\rangle}{2LD_\phi(v_t,x_t)} < \gamma_{t,\max}$ and then $h_t - h_{t+1} \geq \frac{\mu\delta^2}{32LD^2} h_t$.
    Because we have $h_{t+1} \leq \left(1 - \frac{\mu\delta^2}{32LD^2}\right)h_t$ for at least half of the iterations and $h_{t+1} \leq h_t$ for the rest, we obtain
    \begin{align*}
        f(x_t) - f^* \leq \left(1 - \frac{\mu}{32L}\frac{\delta^2}{D^2}\right)^{\lceil(t-1)/2\rceil}LD^2.
    \end{align*}
    
    In $q > 1 + \nu$, we have $h_t - h_{t+1} \geq \min\left\{\frac{\nu}{1 + \nu}, \frac{\nu}{1 + \nu}\frac{((\mu/q)^{1/q}\delta/2)^{\frac{1 + \nu}{\nu}}}{(L(1+\nu)D^2)^{1/\nu}}h_t^{\frac{q - 1 - \nu}{\nu q}}\right\}\cdot h_t$ for at least half of the iterations (Frank--Wolfe steps) and $h_{t+1} \leq h_t$ for the rest (away steps).
    The initial bound $f(x_1) - f^* \leq LD^2$ holds generally for the Frank--Wolfe algorithm (see the proof of Theorem~\ref{theorem:sublinear-convex} and~\cite[Remark 2.4]{Braun2025-sz}).
    We use Lemma~\ref{lemma:contraction-convergence-rate} with $c_0 = LD^2$, $c_1 = \frac{\nu}{1 + \nu}$, $c_2 = c_1 \frac{((\mu/q)^{1/q}\delta/2)^{\frac{1 + \nu}{\nu}}}{(L(1+\nu)D^2)^{1/\nu}}$, and $\theta_0 = \frac{q - 1 - \nu}{\nu q} > 0$ from $q > 1 + \nu$ and obtain the claim.
\end{proof}

\begin{remark}[Compatibility of parameters] The condition $q > 1 + \nu$ is necessary in case that $(\nu,q) \neq (1,2)$. The reason is as follows. In the case $q = 1 + \nu$, for Frank--Wolfe steps, we have
\begin{align}
    \frac{((\mu/q)^{1/q}\delta/2)^{\frac{1 + \nu}{\nu}}}{(L(1+\nu)D^2)^{1/\nu}} = \left(\frac{\mu}{L}\right)^{1/\nu}\left(\frac{\delta}{D}\right)^{\frac{1+\nu}{\nu}}\frac{1}{2^{\frac{1+\nu}{\nu}}(1+\nu)^{2/\nu}D^{(1-\nu)/\nu}}, \label{eq:fw-step-para}
\end{align}
which might be greater than 1 because $\frac{1}{D^{(1-\nu)/\nu}} \geq 1$ if $D < 1$ and $\nu$ are small enough. In order to make~\Eqref{eq:fw-step-para} smaller than 1, $\nu$ should be 1, \ie, $(\nu, q) = (1, 2)$. When $\nu = 1$ and $q \neq 2$, it reduces to the $q > 1 + \nu$ case.
\end{remark}

\section{Proof of Convergence Analysis for Nonconvex Optimization}
\subsection{Global Convergence}
We show that Algorithm~\ref{alg:fw} with $\gamma_t = \gamma := 1 / (T + 1)^{\frac{1}{1 + \nu}}$ globally converges to a stationary point, where $T \in\mathbb{N}$ is the number of iterations.
Its proof is inspired by~\cite[Theorem 1]{Lacoste-Julien2016-mj} and~\cite[Theorem 4.7]{Pokutta2024-qu} and identical to those for the case when $\phi = \frac{1}{2}\|\cdot\|^2$.

\begin{theorem}[Global sublinear convergence for nonconvex optimization]\label{theorem:sublinear-nonconvex}
    Suppose that Assumption~\ref{assumption:setting} holds.
    Let $\nu > 0$ and $D := \sqrt{\sup_{x, y\in P}D_\phi(x,y)}$ be the diameter of $P$ characterized by $D_\phi$ and let $T \in\mathbb{N}$.
    Then, the iterates of the FW algorithm with $\gamma_t = \gamma := 1 / (T + 1)^{\frac{1}{1 + \nu}}$ satisfy
    \begin{align*}
        G_T := \min_{0 \leq t \leq T}\max_{v_t \in P}\langle \nabla f(x_t), x_t - v_t\rangle \leq \frac{2\max\{h_0, LD^2\}}{(T+1)^{\frac{\nu}{1 + \nu}}},
    \end{align*}
    where $h_0 = f(x_0) - f^*$ is the primal gap at $x_0$.
\end{theorem}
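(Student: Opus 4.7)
The plan is to telescope the primal progress inequality from Lemma~\ref{lemma:primal-progress} and exploit the fact that a bound on the average Frank--Wolfe gap immediately bounds the minimum. Throughout, take $v_t \in \argmin_{v \in P}\langle \nabla f(x_t), v\rangle$ so that $g_t := \langle \nabla f(x_t), x_t - v_t\rangle = \max_{v \in P}\langle \nabla f(x_t), x_t - v\rangle$ is the Frank--Wolfe gap at iteration $t$, and note that $G_T = \min_{0 \le t \le T} g_t$.

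First I would apply Lemma~\ref{lemma:primal-progress} to $x_{t+1} = (1-\gamma)x_t + \gamma v_t$ with the fixed step-size $\gamma = 1/(T+1)^{1/(1+\nu)}$, using $D_\phi(v_t, x_t) \le D^2$ by the definition of the diameter, to obtain
\begin{equation*}
f(x_t) - f(x_{t+1}) \ge \gamma\, g_t - L\gamma^{1+\nu} D^2.
\end{equation*}
Summing this telescoping inequality over $t = 0, 1, \ldots, T$ and using $f(x_{T+1}) \ge f^*$ gives
\begin{equation*}
\gamma \sum_{t=0}^{T} g_t \le f(x_0) - f(x_{T+1}) + L\gamma^{1+\nu} D^2 (T+1) \le h_0 + L\gamma^{1+\nu} D^2 (T+1).
\end{equation*}

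Next, since $G_T \le \frac{1}{T+1}\sum_{t=0}^T g_t$, dividing by $\gamma(T+1)$ yields
\begin{equation*}
G_T \le \frac{h_0}{\gamma(T+1)} + L\gamma^\nu D^2.
\end{equation*}
Finally, I would plug in $\gamma = 1/(T+1)^{1/(1+\nu)}$. Then $\gamma(T+1) = (T+1)^{\nu/(1+\nu)}$ and $\gamma^\nu = (T+1)^{-\nu/(1+\nu)}$, so both terms share the same rate and
\begin{equation*}
G_T \le \frac{h_0 + LD^2}{(T+1)^{\nu/(1+\nu)}} \le \frac{2\max\{h_0, LD^2\}}{(T+1)^{\nu/(1+\nu)}},
\end{equation*}
which is the desired bound. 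There is no real obstacle here: the proof is a clean telescoping argument, and the only genuine design choice is to balance the linear-in-$\gamma$ progress term against the $\gamma^{1+\nu}$ loss term, which determines the horizon-dependent optimal $\gamma$. The rate $\nu/(1+\nu)$ recovers the classical $1/2$ rate of Lacoste-Julien when $\nu = 1$ (\ie, Euclidean $\phi$), as claimed in Table~\ref{tab:rate}.
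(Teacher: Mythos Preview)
Your proof is correct and follows essentially the same argument as the paper's own proof: apply Lemma~\ref{lemma:primal-progress}, telescope over $t=0,\ldots,T$, bound the minimum by the average, and substitute the stated $\gamma$ to balance the two terms. The paper's exposition differs only cosmetically (it applies the diameter bound after summing rather than before).
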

\begin{proof}
Substituting $x_{t+1}$ for $x_+$ and $x_t$ for $x$ in Lemma~\ref{lemma:primal-progress}, we have the following inequality:
\begin{align*}
    f(x_t) - f(x_{t+1})&\geq \gamma\langle\nabla f(x_t),x_t - v_t\rangle - L\gamma^{1+\nu} D_\phi(v_t, x_t).
\end{align*}
Summing up the above inequality along $t = 1,\ldots,T$ and rearranging provides
\begin{align*}
    \gamma\sum_{t = 0}^T\langle\nabla f(x_t), x_t - v_t\rangle &\leq f(x_0) - f(x_{T+1}) + \gamma^{1+\nu}\sum_{t = 0}^TL D_\phi(x_t, v_t)\\
    &\leq f(x_0) - f^* + \gamma^{1+\nu}\sum_{t = 0}^TLD^2 = h_0 + \gamma^{1+\nu}(T+1)LD^2.
\end{align*}
Dividing by $\gamma(T+1)$ on the both sides, we obtain
\begin{align*}
    G_T \leq \frac{1}{T+1}\sum_{t = 0}^T\langle\nabla f(x_t), x_t - v_t\rangle\leq\frac{h_0}{\gamma(T+1)} + \gamma^\nu LD^2,
\end{align*}
which, for $\gamma = 1 / (T + 1)^{\frac{1}{1 + \nu}}$, implies
\begin{align*}
    G_T \leq \frac{1}{T+1}\sum_{t = 0}^T\langle\nabla f(x_t), x_t - v_t\rangle\leq (h_0 + LD^2) (T+1)^{-\frac{\nu}{1 + \nu}} \leq 2\max\{h_0, LD^2\}(T+1)^{-\frac{\nu}{1 + \nu}}.
\end{align*}
This is the desired claim.
\end{proof}

As mentioned above, we generalize prior similar results. In fact, in the case where $\phi = \frac{1}{2}\|\cdot\|^2$ and $\nu = 1$, we have $\gamma_t = \frac{1}{\sqrt{T + 1}}$ and obtain as guarantee
\begin{align*}
    \min_{0 \leq t \leq T}\max_{v_t \in P}\langle \nabla f(x_t), x_t - v_t\rangle \leq \frac{2\max\{h_0, LD^2\}}{\sqrt{T+1}},
\end{align*}
which is the same rate as~\cite[Theorem 1]{Lacoste-Julien2016-mj}.

\subsection{Local Sublinear Convergence}\label{subsec:proof-local-sublinear-convergence-weakly-convex}
We prove a lemma used in the proof of convergence analysis.

\begin{lemma}
For any $\nu\in(0,1]$ and $t \geq 0$, it holds that
\begin{align}
    h(t) := \frac{(t+3)^\nu(t + 2 - \nu)}{(t+2)^{1+\nu}} \leq 1. \label{ineq:fw-open-loop-induction}
\end{align}
\end{lemma}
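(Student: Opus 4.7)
My plan is to reduce the inequality to a clean one-variable form by the substitution $s := t+2 \geq 2$. Under this substitution, the claim $h(t) \leq 1$ is equivalent to
\[
(s+1)^{\nu}(s-\nu) \leq s^{1+\nu} = s \cdot s^{\nu},
\]
which, after dividing by $s \cdot s^{\nu} > 0$, is equivalent to
\[
\bigl(1 - \tfrac{\nu}{s}\bigr)\bigl(1 + \tfrac{1}{s}\bigr)^{\nu} \leq 1.
\]
This form is much more transparent and avoids juggling three different powers of $(t+2)$, $(t+3)$, and $(t+2-\nu)$.

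The second step is to control $(1+1/s)^{\nu}$ using Bernoulli's inequality in its concave form: for $\nu \in [0,1]$ and $x \geq -1$, one has $(1+x)^{\nu} \leq 1 + \nu x$. Applied with $x = 1/s$, this gives $(1+1/s)^{\nu} \leq 1 + \nu/s$.

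Combining, I obtain
\[
\bigl(1 - \tfrac{\nu}{s}\bigr)\bigl(1 + \tfrac{1}{s}\bigr)^{\nu} \leq \bigl(1 - \tfrac{\nu}{s}\bigr)\bigl(1 + \tfrac{\nu}{s}\bigr) = 1 - \tfrac{\nu^{2}}{s^{2}} \leq 1,
\]
which finishes the proof upon unwinding the substitution. Note that $\nu \in (0,1]$ is used precisely to invoke Bernoulli, and $s \geq 2 > 0$ makes all factors well-defined and the intermediate factors nonnegative, so no sign issues arise. I do not expect any real obstacle here; the only subtle point is remembering to use the sub-linear (concave) form of Bernoulli rather than the more familiar $(1+x)^{\nu} \geq 1 + \nu x$ that holds for $\nu \geq 1$. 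An alternative but essentially equivalent route would be to show $\log h(t) \leq 0$ by checking monotonicity in $t$ and the limit as $t \to \infty$, but the Bernoulli argument is shorter and entirely elementary.
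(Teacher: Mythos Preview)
Your proof is correct and takes a genuinely different route from the paper's argument. The paper proceeds exactly by the alternative you anticipated at the end: it computes $\lim_{t\to\infty}h(t)=1$, then takes logarithms and differentiates to show $h'(t)/h(t) = \frac{\nu}{t+3}+\frac{1}{t+2-\nu}-\frac{1+\nu}{t+2} > 0$, concluding that $h$ is increasing with supremum $1$. Your approach instead rewrites the claim as $(1-\nu/s)(1+1/s)^{\nu}\le 1$ with $s=t+2$ and dispatches it via the concave Bernoulli bound $(1+x)^{\nu}\le 1+\nu x$. This is more elementary---no derivatives, no limits, no common-denominator algebra---and it also yields the sharper quantitative estimate $h(t)\le 1-\nu^{2}/(t+2)^{2}$, which the paper's monotonicity argument does not directly provide. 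The paper's route, on the other hand, makes explicit that $h$ is strictly increasing toward $1$, a structural fact your argument does not reveal (though it is not needed for the lemma).
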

\begin{proof}
We have
\begin{align*}
    \lim_{t \to \infty} h(t) = \lim_{t \to \infty}\frac{(t+3)^\nu(t + 2 - \nu)}{(t+2)^{1+\nu}} = \lim_{t \to \infty}\left(1 + \frac{1}{t + 2}\right)^\nu\left(1 - \frac{\nu}{t + 2}\right) = 1.
\end{align*}
We take the logarithm of $h(t)$, that is, $\log h(t) = \nu\log (t+3) + \log(t+2-\nu) - (1+\nu)\log(t+2)$ and obtain
\begin{align*}
    \frac{h'(t)}{h(t)} &= \frac{\nu}{t + 3} + \frac{1}{t + 2 - \nu} - \frac{1+\nu}{t+2}
    = \frac{-2\nu^2+10\nu}{(t+2)(t+3)(t+2-\nu)} > 0,
\end{align*}
where the last inequality holds for $\nu \in (0, 1]$ and $t \geq 0$.
Since $h(t) > 0$ for $t \geq 0$, the above inequality implies $h'(t) > 0$.
Therefore, we have $\sup h(t) = 1$, which implies $h(t) \leq 1$.
\end{proof}

Now we show sublinear convergence with $\gamma_t = \frac{2}{2 + t}$.
The proof is a modified version of Theorem~\ref{theorem:sublinear-convex}.

\begin{theorem}[Local sublinear convergence]\label{theorem:local-sublinear-convergence-weakly-convex}
    Suppose that Assumptions~\ref{assumption:setting},~\ref{assumption:phi-strongly-conv}, and~\ref{assumption:local-convergence} hold.
    Let $\nu \in (0,1]$ and let $D := \sqrt{\sup_{x, y\in P}D_\phi(x,y)}$ be the diameter of $P$ characterized by the Bregman distance $D_\phi$.
    Consider the iterates of Algorithm~\ref{alg:fw} with the open loop step-size, \ie, $\gamma_t = \frac{2}{2+t}$. Then, if $\rho/\sigma \leq L$ and $\frac{3\rho}{\mu}\leq 2 - \nu$ hold, it holds that, for all $t \geq 1$,
    \begin{align}
        f(x_t) - f^* \leq \frac{2^{1+\nu} \mu LD^2}{\rho(t + 2)^\nu},\label{ineq:local-sublinear-convergence-weakly-convex}
    \end{align}
    and hence for any accuracy $\epsilon > 0$ we have $f(x_t) - f^* \leq \epsilon$ for all $t\geq(2^{1+\nu}\mu LD^2/\rho \epsilon)^{1/\nu}$.
\end{theorem}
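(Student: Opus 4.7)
The plan is to extend the proof strategy of Theorem~\ref{theorem:sublinear-convex} by replacing the convex primal-gap inequality with its weakly-convex analogue (Lemma~\ref{lemma:primal-gap-Frank-Wolfe-gap-weakly}), and then exploiting the two scalar compatibility conditions $\rho/\sigma\le L$ and $3\rho/\mu\le 2-\nu$ to absorb the extra weak-convexity correction. Write $h_t := f(x_t)-f^*$ and $C := 2^{1+\nu}\mu L D^2/\rho$; the goal is to prove $h_t \le C/(t+2)^\nu$ by induction on $t\ge 1$.

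\textbf{One-step recursion.} The primal progress of Lemma~\ref{lemma:primal-progress} together with $D_\phi(v_t,x_t)\le D^2$ and $\gamma_t=2/(2+t)$ gives $h_{t+1}\le h_t-\gamma_t\langle\nabla f(x_t),x_t-v_t\rangle+L\gamma_t^{1+\nu}D^2$. Combining Lemma~\ref{lemma:primal-gap-Frank-Wolfe-gap-weakly} with the local quadratic growth bound $(\rho/2)\|x_t-x^*\|^2\le (\rho/\mu)h_t$ (valid whenever $x_t\in[f\le f^*+\zeta]\cap P$) yields the scaling inequality $\langle\nabla f(x_t),x_t-v_t\rangle\ge(1-\rho/\mu)h_t$. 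Substituting, one obtains $h_{t+1}\le \frac{t+2\rho/\mu}{t+2}h_t+\frac{2^{1+\nu}L D^2}{(t+2)^{1+\nu}}$.

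\textbf{Inductive step.} If $h_t\le C/(t+2)^\nu$ for some $t\ge 1$, the recursion becomes $h_{t+1}\le[C(t+2\rho/\mu)+2^{1+\nu}L D^2]/(t+2)^{1+\nu}$. The identity $C\rho/\mu=2^{1+\nu}LD^2$, combined with $3\rho/\mu\le 2-\nu$, rearranges to $C(t+2\rho/\mu)+2^{1+\nu}LD^2\le C(t+2-\nu)$. Applying~\eqref{ineq:fw-open-loop-induction} then gives $h_{t+1}\le C(t+2-\nu)/(t+2)^{1+\nu}\le C/(t+3)^\nu$, closing the induction.

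\textbf{Base case $t=1$.} Since $\gamma_0=1$ we have $x_1=v_0$. The extended descent lemma (Lemma~\ref{lemma:extended-descent}) combined with the LMO optimality $\langle\nabla f(x_0),v_0-x_0\rangle\le\langle\nabla f(x_0),x^*-x_0\rangle$ and then weak convexity at $x_0$ yields $h_1\le (\rho/2)\|x_0-x^*\|^2+L D^2$. Using $\sigma$-strong convexity of $\phi$, i.e.\ $\|x_0-x^*\|^2\le 2D_\phi(x_0,x^*)/\sigma\le 2D^2/\sigma$, and then $\rho/\sigma\le L$, gives $h_1\le 2LD^2$. A short calculation shows that $3\rho/\mu\le 2-\nu$ implies $\rho/\mu\le(2-\nu)/3\le(2/3)^\nu$ for $\nu\in(0,1]$, which rearranges exactly to $2L D^2\le C/3^\nu$; hence $h_1\le C/3^\nu$ as required.

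\textbf{Main obstacle.} The delicate part is the base case: the dependence of $h_1$ on the unknown initial gap $h_0$ must be eliminated, which is achieved precisely by $\rho/\sigma\le L$, converting the weak-convexity correction $(\rho/2)\|x_0-x^*\|^2$ into an absolute quantity of order $LD^2$. The second condition $3\rho/\mu\le 2-\nu$ is then exactly what is needed both to close the inductive step and to make $2LD^2$ fit beneath the target $C/3^\nu$. A secondary subtlety is that every iterate must remain in $[f\le f^*+\zeta]\cap P$ so that local quadratic growth can be invoked; this is inherited from the uniform bound $h_t\le C/3^\nu$ produced by the induction, provided $\zeta$ is at least this size.
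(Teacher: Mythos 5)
Your proof is correct and follows essentially the same route as the paper: the same one-step recursion obtained from Lemma~\ref{lemma:primal-progress} and Lemma~\ref{lemma:primal-gap-Frank-Wolfe-gap-weakly} combined with local quadratic growth, the same induction closed via \eqref{ineq:fw-open-loop-induction} using $3\rho/\mu\le 2-\nu$, and the same initial bound $h_1\le 2LD^2$ from the $\sigma$-strong convexity of $\phi$ and $\rho/\sigma\le L$. Your explicit verification of the base case against $C/3^\nu$ (via $\rho/\mu\le(2-\nu)/3\le(2/3)^\nu$) and your remark that the iterates must stay in $[f\le f^*+\zeta]\cap P$ are, if anything, slightly more careful than the paper's write-up.
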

\begin{proof}
Using~Lemma~\ref{lemma:primal-progress}, we have
\begin{align*}
    f(x_t) - f(x_{t+1})&\geq \gamma_t\langle\nabla f(x_t),x_t - v_t\rangle - L\gamma_t^{1+\nu} D_\phi(v_t, x_t)\\
    &\geq \gamma_t\left(f(x_t) - f^* - \frac{\rho}{2}\|x_t - x^*\|^2\right) - L\gamma_t^{1+\nu} D_\phi(v_t, x_t),
\end{align*}
where the last inequality holds because of~\Eqref{ineq:primal-gap-Frank-Wolfe-gap-weakly} in Lemma~\ref{lemma:primal-gap-Frank-Wolfe-gap-weakly}.
Subtracting $f^*$ and rearranging provides
\begin{align}
    f(x_{t+1}) - f^* &\leq (1- \gamma_t)(f(x_t) - f^*) +\frac{\rho\gamma_t}{2}\|x_t - x^*\|^2 + L\gamma_t^{1+\nu} D_\phi(v_t, x_t) \label{ineq:weakly-convex-lsmad-fw-gap} \\
    &\leq \left(1 - \left(1 - \frac{\rho}{\mu}\right)\gamma_t\right)(f(x_t) - f^*) + L\gamma_t^{1+\nu} D^2,\label{ineq:weakly-convex-lsmad-fw-gap-quad-growth}
\end{align}
where the last inequality holds due to the quadratic growth condition.
For $t = 0$, using~\Eqref{ineq:weakly-convex-lsmad-fw-gap}, we have
\begin{align}
    f(x_1) - f^* \leq \frac{\rho}{2}\|x_0 - x^*\|^2 + LD_\phi(v_0, x_0) \leq \left(\frac{\rho}{\sigma} + L\right)D^2 \leq 2LD^2 \leq \frac{2\mu}{\rho}LD^2,\label{ineq:initial-bound}
\end{align}
where the second inequality holds because $\phi$ is $\sigma$-strongly convex and the last inequality holds because of $3 < 6 /(2 - \nu) \leq 2\mu/\rho$. 
Now we consider $t \geq 1$.
Using~\Eqref{ineq:weakly-convex-lsmad-fw-gap-quad-growth} and $\gamma_t = \frac{2}{2 + t}$, we have
\begin{align*}
    f(x_{t+1}) - f^* &\leq \left(t + \frac{2\rho}{\mu}\right)\frac{2^{1+\nu}\mu LD^2}{\rho(t+2)^{1+\nu}} + \frac{2^{1+\nu}LD^2}{(t+2)^{1+\nu}}\\
    &= \left(t + \frac{2\rho}{\mu} + \frac{\rho}{\mu}\right)\frac{\mu}{\rho}\frac{2^{1+\nu} LD^2}{(t+2)^{1+\nu}}\\
    &= \frac{\mu}{\rho}\frac{2^{1+\nu} LD^2}{(t+3)^\nu}\left(\frac{(t+3)^\nu(t + 3\rho/\mu)}{(t+2)^{1+\nu}}\right)\\
    &\leq \frac{2^{1+\nu} \mu LD^2}{\rho (t+3)^\nu},
\end{align*}
where the last inequality holds because of $(t+3)^\nu(t + 3\rho/\mu) \leq (t+3)^\nu(t + 2 - \nu) \leq (t+2)^{1+\nu}$ from~\Eqref{ineq:fw-open-loop-induction}.
\end{proof}

We can apply the quadratic growth condition again as before and obtain
\begin{align*}
    \dist(x_t,\mathcal{X}^*)^2 \leq \frac{2}{\mu}(f(x_t) - f^*) \leq \frac{2^{2+\nu} LD^2}{\rho(t + 2)^\nu},
\end{align*}
and hence
\begin{align*}
    \dist(x_t,\mathcal{X}^*) \leq \frac{2^{1+\nu/2} D\sqrt{L}}{\sqrt{\rho}(t + 2)^{\nu/2}}.
\end{align*}

Note that Theorem~\ref{theorem:local-sublinear-convergence-weakly-convex} does not require knowledge of the number of iterations $T$ ahead of time compared to Theorem~\ref{theorem:sublinear-nonconvex}. We stress, nonetheless, that the latter can also be adjusted using a different step-size strategy to obtain an any-time guarantee; see \cite{Braun2025-sz} for details for the standard Euclidean case, which can be generalized to our setup.  
Moreover, we can apply Theorem~\ref{theorem:local-sublinear-convergence-weakly-convex} to the Euclidean FW algorithm, \ie, $\phi = \frac{1}{2}\|\cdot\|^2$ and $\nu =1$.
Using~\Eqref{ineq:local-sublinear-convergence-weakly-convex}, $D_{\Euc} := \sup_{x, y \in P}\|x - y\|^2$, and $D_{\Euc} = \sqrt{2}D$, we obtain
\begin{align*}
    f(x_t) - f^* \leq \frac{2 \mu LD_{\Euc}^2}{\rho(t + 2)},
\end{align*}

Theorem~\ref{theorem:local-sublinear-convergence-weakly-convex} requires $\rho/\sigma \leq L$ and $\frac{3\rho}{\mu}\leq 2 - \nu$.
These assumptions are easy to satisfy.

\begin{example}[Example 3.1 in the arXiv version of \cite{Liao2024-vd}]\label{example:weakly-convex-quadratic-growth}
Let us consider
\begin{align*}
    f(x) = \begin{cases}
        -x^2 + 1, & \text{if}\ -1 < x < -0.5,\\
        3(x+1)^2, & \text{otherwise}.
    \end{cases}
\end{align*}
The function $f$ is not convex but $\rho$-weakly convex with $\rho = 2$.
A global optimal solution of $f$ is $x^* = -1$ and its value is $f(x^*) = 0$.
Moreover, $f$ satisfies the quadratic growth condition with $0 < \mu \leq 6$.
Because $f$ is a quadratic function, we have $L \geq 6$.
It holds that $2 = \rho \leq L$ and $1 = \frac{3\rho}{\mu} \leq 1 < 2 - \nu$ (set $\mu = 6$).
Therefore, the assumption of Theorem~\ref{theorem:local-sublinear-convergence-weakly-convex} holds.
\end{example}
In order to verify $\frac{3\rho}{\mu}\leq 2 - \nu$, it is easier to examine the sufficient condition $\frac{3\rho}{\mu}\leq 1$ instead because the exact value of $\nu$ is difficult to estimate.
On the other hand, without loss of generality, we can assume $\sigma = 1$.
When $\sigma > 1$, $\rho/\sigma < \rho \leq L$ holds.
When $\sigma < 1$, we can use $\phi_1 = \phi + \frac{1-\sigma}{2}\|\cdot\|^2$, which is 1-strongly convex.
When $\phi$ is convex but not strongly convex, we can use $\phi_2 = \phi + \frac{1}{2}\|\cdot\|^2$.
Therefore, it suffices to verify $\rho/\sigma \leq \rho \leq L$, which often holds (for example, see an example of phase retrieval in Section~\ref{subsec:phase-retrieval}).

\subsection{Proof of Theorem~\ref{theorem:convergence-rate-inner-optima-weakly-convex}}\label{subsec:proof-theorem:convergence-rate-inner-optima-weakly-convex}

Theorems~\ref{theorem:convergence-rate-inner-optima-weakly-convex},~\ref{theorem:convergence-rate-away-step-weakly-convex}, and~\ref{theorem:convergence-rate-uniformly-convex-set-weakly-convex} require the weak convexity and the local quadratic growth condition of $f$ with $\rho \leq \mu$. We show several examples as follows.
\begin{example}\label{example:weakly-convex-quadratic-growth-rho-mu}
We show some examples of $\rho$-weakly convex functions satisfying the (local) quadratic growth condition with $\rho < \mu$.
\begin{enumerate}
    \item Let $f(x) = \log(1+x^2)$. $f$ attains the minimum $f^* = 0$ at $x^* = 0$. The function $f$ is $1/4$-weakly convex because $f''(x) = -\frac{2(1-x^2)}{(1 + x^2)^2}$ and its minimum is $-1/4$. Moreover, $(x - x^*)^2 = x^2 \leq \frac{2}{\mu}\log(1 + x^2) = \frac{2}{\mu}(f(x) - f^*)$ holds for any $x\in[f\leq\zeta]$. Letting $\mu = 1/3$, we have $6 \log (1 + x^2) - x^2 \geq 0$ for any $x \in [f \leq 16]$. Note that $\zeta = 16$ is not an exact value. In this case, we have $1/4 = \rho < \mu = 1/3$. Through simple calculations and visualization, we can obtain a larger value of $\mu$ than $1/3$. This function is used for image restoration~\citep{Bot2016-jq,Stella2017-vi}.
    \item Let $f(x) = \frac{1}{2}x^2 + 2(1-e^{-x^2})$. $f$ attains the minimum $f^* = 0$ at $x^* = 0$. The function $f$ is $(8e^{-3/2} - 1)$-weakly convex because $f''(x) = 1 + 4e^{-x^2} (1-2x^2)$ and its minimum is $1 - 8e^{-3/2} \simeq -0.785$. Moreover, $(x - x^*) = x^2 \leq \frac{2}{\mu}(f(x) - f^*)$ holds for any $x \in [f \leq \zeta]$. Letting $\mu = 1$, we have $\frac{2}{\mu}(f(x) - f^*) - x^2 = 4(1-e^{-x^2}) \geq 0$ for any $x \in \R$. The function $f$ satisfies the global quadratic growth condition with $8e^{-3/2} - 1 = \rho < \mu = 1$.
    \item Let $f$ be the function defined in Example~\ref{example:weakly-convex-quadratic-growth}. We set $\mu = \mu_0$ for some $\mu_0\in(2,6]$ because $\mu$ can be from $(0, 6]$. It holds that $2 = \rho < \mu$.
\end{enumerate}
\end{example}

Because Algorithm~\ref{alg:adaptive-bregman} is well-defined (see also Remark~\ref{remark:well-defined-linesearch}), the convergence result of the FW algorithm with Algorithm~\ref{alg:adaptive-bregman} is the same as Theorem~\ref{theorem:convergence-rate-inner-optima-weakly-convex}.
\begin{proof}[Proof of Theorem~\ref{theorem:convergence-rate-inner-optima-weakly-convex}]
    Let $h_t := f(x_t) - f^*$.
    Using Lemma~\ref{lemma:progress-bregman}, we have
    \begin{align*}
        h_t - h_{t+1} = f(x_t) - f(x_{t+1}) \geq \frac{\nu}{1 + \nu} \langle\nabla f(x_t), x_t - v_t\rangle\gamma_t,
    \end{align*}
    where $\gamma_t = \min\left\{1, \left(\frac{\langle\nabla f(x_t), x_t - v_t\rangle}{L(1+\nu)D_\phi(v_t, x_t)}\right)^{\frac{1}{\nu}}\right\}$.
    We consider two cases: (i) $\gamma_t < 1$ and (ii) $\gamma_t = 1$.
    
    (i) $\gamma_t < 1$:
    We have
    \begin{align*}
        h_t - h_{t+1} & \geq \frac{\nu\langle\nabla f(x_t), x_t - v_t\rangle}{1 + \nu}\min\left\{ 1, \left(\frac{\langle\nabla f(x_t), x_t - v_t\rangle}{L(1+\nu)D_\phi(v_t, x_t)}\right)^{1/\nu}\right\} \\
        &\geq \frac{\nu}{1 + \nu} \frac{\langle\nabla f(x_t), x_t - v_t\rangle^{1+1/\nu}}{(L(1+\nu)D_\phi(v_t, x_t))^{1/\nu}}\\
        &\geq \frac{\nu}{1 + \nu}\frac{r^{1+1/\nu}\|\nabla f(x_t)\|^{1+1/\nu}}{(L(1+\nu))^{1/\nu}D^{2/\nu}}\\
        &\geq \frac{\nu}{1 + \nu}\frac{r^{1+1/\nu}}{Mc^{1+1/\nu}D^{2/\nu}}h_t^{\frac{1+\nu}{2\nu}},
    \end{align*}
    where the third inequality holds due to Proposition~\ref{prop:scaling-cond-bregman-weak-convex} and the definition of $D$ and the last inequality holds due to the local PL inequality from Remark~\ref{remark:quad-growth-pl-ineq} with $c = \frac{\sqrt{2\mu}}{\mu - \rho}$ and $M := (L(1+\nu))^{1/\nu}$.

    (ii) In the case where $\gamma_t = 1$, \ie, $\left(\frac{\langle\nabla f(x_t), x_t - v_t\rangle}{L(1+\nu)D_\phi(v_t, x_t)}\right)^{\frac{1}{\nu}} \geq 1$, this implies
    \begin{align}
        \langle\nabla f(x_t), x_t - v_t\rangle \geq L(1+\nu)D_\phi(v_t, x_t),\label{ineq:adaptive-larger-1}
    \end{align}
    because of $1/\nu > 1$.
    Using Lemma~\ref{lemma:primal-progress} and~\Eqref{ineq:adaptive-larger-1}, we have 
    \begin{align*}
        h_{t+1} - h_t &\leq L D_\phi(v_t,x_t) - \langle\nabla f(x_t), x_t - v_t\rangle\\
        &\leq-\frac{\nu}{1+\nu}\langle\nabla f(x_t), x_t - v_t\rangle\\
        &\leq-\frac{\nu}{1+\nu}\left(h_t - \frac{\rho}{2}\|x_t - x^*\|^2\right)\\
        &\leq -\frac{\nu}{1+\nu}\left(1 - \frac{\rho}{\mu}\right)h_t,
    \end{align*}
    where the third inequality holds because of~\Eqref{ineq:primal-gap-Frank-Wolfe-gap-weakly} in Lemma~\ref{lemma:primal-gap-Frank-Wolfe-gap-weakly}, and the last inequality holds because of the local quadratic growth condition of $f$ with $\mu > 0$.
    Therefore, we have
    \begin{align*}
        h_{t+1} \leq \frac{1}{1+\nu}\left(1 + \frac{\rho\nu}{\mu}\right)h_t.
    \end{align*}

    From (i) and (ii), we have
    \begin{align*}
        h_t - h_{t+1} \geq \min\left\{\frac{\nu}{1+\nu}\left(1 - \frac{\rho}{\mu}\right)h_t, \frac{\nu}{1 + \nu}\frac{r^{1+1/\nu}}{Mc^{1+1/\nu}D^{2/\nu}}h_t^{\frac{1+\nu}{2\nu}}\right\}.
    \end{align*}
    When $\nu = 1$, we have $h_t - h_{t+1} \geq \min\left\{\frac{1}{2}\left(1 - \frac{\rho}{\mu}\right), \frac{r^{2}}{2Mc^{2}D^{2}}\right\}\cdot h_t$.
    This inequality and the initial bound $f(x_1) - f^* \leq LD^2$ due to~Lemma~\ref{lemma:extended-descent} imply
    \begin{align*}
        h_t \leq \max\left\{\frac{1}{2}\left(1 + \frac{\rho}{\mu}\right), 1 - \frac{r^{2}}{2Mc^{2}D^{2}}\right\}^{t-1}LD^2.
    \end{align*}
    On the other hand, when $\nu \in (0, 1)$, 
    \begin{align*}
        h_t - h_{t+1} \geq \min\left\{\frac{\nu}{1+\nu}\left(1 - \frac{\rho}{\mu}\right), \frac{\nu}{1 + \nu}\frac{r^{1+1/\nu}}{Mc^{1+1/\nu}D^{2/\nu}}h_t^{\frac{1-\nu}{2\nu}}\right\} \cdot h_t.
    \end{align*}
    Using $f(x_1) - f^* \leq LD^2$ and Lemma~\ref{lemma:contraction-convergence-rate} with $c_0 = LD^2$, $c_1 = \frac{\nu}{1+\nu}\left(1 - \frac{\rho}{\mu}\right)$, $c_2 = \frac{\nu}{1 + \nu}\frac{r^{1+1/\nu}}{Mc^{1+1/\nu}D^{2/\nu}}$, and $\theta_0 = \frac{1-\nu}{2\nu} > 0$, we have the claim.
\end{proof}
When $\phi = \frac{1}{2}\|\cdot\|^2$ and $\nu = 1$, we have local linear convergence with $\gamma_t = \min\left\{\frac{\langle\nabla f(x_t), x_t - v_t\rangle}{L\|v_t - x_t\|^2}, 1\right\}$ from Theorem~\ref{theorem:convergence-rate-inner-optima-weakly-convex}.
When $\phi = \frac{1}{2}\|\cdot\|$, \ie, $D_\phi(x, y)=\frac{1}{2}\|x - y\|^2$, $D_{\Euc} = \sqrt{2}D$ provides
\begin{align*}
    f(x_t) - f^* \leq \max\left\{\frac{1}{2}\left(1 + \frac{\rho}{\mu}\right), 1 - \frac{r^2}{2Lc^2D_{\Euc}^2}\right\}^{t-1}\frac{LD_{\Euc}^2}{2}.
\end{align*}

\subsection{Proof of Theorem~\ref{theorem:convergence-rate-away-step-weakly-convex}}\label{subsec:proof-theorem:convergence-rate-away-step-weakly-convex}
In the same way as Theorem~\ref{theorem:convergence-rate-away-step-convex}, the pyramidal width is the minimal $\delta > 0$ satisfying Lemma~\ref{lemma:scaling-inequality-pyramidal} (see also \cite[Lemma 2.26]{Braun2025-sz} or~\cite[Theorem 3]{Lacoste-Julien2015-gb}).
An upper bound exists on the primal gap for weakly convex functions.
\begin{lemma}[Upper bound on primal gap for weakly convex functions]\label{lemma:scaling-inequality-pyramidal-weakly-conv}
    Suppose that Assumptions~\ref{assumption:setting} and~\ref{assumption:local-convergence} hold.
    Let $P$ be a polytope with the pyramidal width $\delta > 0$.
    Let $\Scal$ denote any set of vertices of $P$ with $x\in\conv\Scal$.
    Let $\psi$ be any vector, so that we define $v^{\FW} = \argmin_{v \in P}\langle\psi, v\rangle$ and $v^{\away} = \argmax_{v \in \Scal}\langle\psi, v\rangle$.
    If $\rho/\mu < 1$, then it holds that, for all $x\in [f\leq f^* + \zeta]\cap P$,
    \begin{align*}
        f(x) - f^* \leq \frac{2\mu}{(\mu - \rho)^2\delta^2}\langle\nabla f(x), v^{\away} - v^{\FW}\rangle^2.
    \end{align*}
\end{lemma}
\begin{proof}
    Using the weak convexity and the local quadratic growth condition of $f$, it holds that, for all $x\in [f\leq f^* + \zeta]\cap P$,
    \begin{align*}
        f(x) - f(x^*) &\leq \langle\nabla f(x), x - x^*\rangle + \frac{\rho}{2}\|x - x^*\|^2\\
        &\leq \langle\nabla f(x), x - x^*\rangle + \frac{\rho}{\mu}(f(x) - f(x^*)),
    \end{align*}
    which implies
    \begin{align*}
        0 \leq\left(1 - \frac{\rho}{\mu}\right)(f(x) - f(x^*)) \leq \langle\nabla f(x), x - x^*\rangle,
    \end{align*}
    where the first inequality follows from $1 - \rho/\mu > 0$.
    Using~\Eqref{ineq:scaling-inequality-pyramidal} with $\psi = \nabla f(x)$ and $y = x^*$ and the above inequality, we obtain
    \begin{align*}
        \frac{\langle\nabla f(x), v^{\away} - v^{\FW}\rangle^2}{\delta^2} &\geq \frac{\langle\nabla f(x),x - x^*\rangle^2}{\|x - x^*\|^2}\\
        &\geq \left(1 - \frac{\rho}{\mu}\right)^2\frac{(f(x) - f(x^*))^2}{\|x - x^*\|^2}\\
        &\geq \frac{\mu}{2}\left(1 - \frac{\rho}{\mu}\right)^2\frac{(f(x) - f(x^*))^2}{f(x) - f(x^*)}\\
        &= \frac{\mu}{2}\left(1 - \frac{\rho}{\mu}\right)^2(f(x) - f(x^*)),
    \end{align*}
    where the third inequality holds because of the local quadratic growth condition.
\end{proof}

\begin{proof}[Proof of Theorem~\ref{theorem:convergence-rate-away-step-weakly-convex}]
    By letting $h_t = f(x_t) - f^*$ and using Lemma~\ref{lemma:scaling-inequality-pyramidal-weakly-conv}, we have
    \begin{align}
        h_t \leq \frac{2\mu}{(\mu - \rho)^2\delta^2}\langle\nabla f(x_t), v^{\away} - v^{\FW}\rangle^2\leq\frac{8\mu\langle\nabla f(x_t), d_t\rangle^2}{(\mu - \rho)^2\delta^2} = \frac{\langle\nabla f(x_t), d_t\rangle^2}{\omega\delta^2},\label{ineq:primal-afw-weakly-convex}
    \end{align}
    where the second inequality holds because $d_t$ is either $x_t - v_t^{\FW}$ or $v_t^{\away} - x_t$ with $\langle\nabla f(x_t), d_t\rangle\geq\langle\nabla f(x), v^{\away} - v^{\FW}\rangle/2$ in Lines~\ref{alg:fw-step} and~\ref{alg:away-step} of Algorithm~\ref{alg:afw}. 
    From Lemma~\ref{lemma:progress-bregman} with $\gamma_{t,\max} = 1$ (Frank--Wolfe steps) and $\gamma_{t,\max} = \frac{\lambda_{v_t^{\away}, t}}{1 - \lambda_{v_t^{\away}, t}}$ (away steps), we obtain
    \begin{align*}
        h_t - h_{t+1} &\geq \frac{\nu}{1 + \nu}\langle\nabla f(x_t), d_t\rangle\min\left\{\gamma_{t,\max}, \left(\frac{\langle\nabla f(x_t),d_t\rangle}{L(1 + \nu)D_\phi(v_t,x_t)}\right)^{\frac{1}{\nu}}\right\}\\
        &= \min\left\{\gamma_{t,\max}\frac{\nu}{1 + \nu}\langle\nabla f(x_t), d_t\rangle, \frac{\nu}{1 + \nu}\frac{\langle\nabla f(x_t),d_t\rangle^{1 + 1/\nu}}{(L(1 + \nu)D_\phi(v_t,x_t))^{1/\nu}}\right\}\\
        &\geq \min\left\{\frac{\nu\gamma_{t,\max}}{1 + \nu}\left(1 - \frac{\rho}{\mu}\right) h_t, \frac{\nu}{1 + \nu}\frac{(h_t \omega\delta^2)^{\frac{1 + \nu}{2\nu}}}{(L(1+\nu)D^2)^{1/\nu}}\right\}\\
        &= \min\left\{\frac{\nu \gamma_{t,\max}}{1 + \nu}\left(1 - \frac{\rho}{\mu}\right)h_t, \frac{\nu}{1 + \nu}\frac{(\omega\delta^2)^{\frac{1 + \nu}{2\nu}}}{(L(1+\nu)D^2)^{1/\nu}}h_t^{\frac{1 + \nu}{2\nu}}\right\},
    \end{align*}
    where the second inequality holds because of $\langle\nabla f(x_t), d_t\rangle \geq \langle\nabla f(x_t), x_t - v_t\rangle \geq h_t - \frac{\rho}{2}\|x_t - v_t\|^2 \geq \left(1 - \frac{\rho}{\mu}\right)h_t$ (from Lemma~\ref{lemma:primal-gap-Frank-Wolfe-gap-weakly} and the quadratic growth condition of $f$) and~\Eqref{ineq:primal-afw-weakly-convex}.

    In $\nu = 1$, for Frank--Wolfe steps, we have $\gamma_{t,\max}\left(1 - \frac{\rho}{\mu}\right) = \left(1 - \frac{\rho}{\mu}\right) \geq \frac{1}{8}\left(1 - \frac{\rho}{\mu}\right)^2\frac{\delta^2}{D^2} = \frac{\omega\delta^2}{\mu D^2} \geq \frac{\omega\delta^2}{2LD^2}$ by $0 < \rho < \mu \leq L$ and $\delta \leq D$.
    For away steps, it seems that we only obtain a monotone progress $h_{t+1} < h_t$ because it is difficult to estimate $\gamma_{t,\max}$ below.
    However, $\gamma_{t,\max}$ cannot be small too often.
    Let us consider $\gamma_t = \gamma_{t,\max}$ in an away step.
    In that case, $v^{\away}_t$ is removed from the active set $\Scal_{t+1}$ in line~\ref{line:remove-away-vertex}.
    Moreover, the active set can only grow in Frank--Wolfe steps (line~\ref{line:add-active-set}).
    It is impossible to remove more vertices from $\Scal_{t+1}$ than have been added in Frank--Wolfe steps.
    Therefore, at most half of iterations until $t$ iterations are in away steps, \ie, in all other steps, we have $\gamma_t = \frac{\langle\nabla f(x_t),d_t\rangle}{2LD_\phi(v_t,x_t)} < \gamma_{t,\max}$ and then $h_{t+1} \leq\left(1 - \frac{\omega\delta^2}{4LD^2}\right) h_t$.
    Because we have $h_{t+1} \leq\left(1 - \frac{\omega\delta^2}{4LD^2}\right) h_t$ for at least half of the iterations and $h_{t+1} \leq h_t$ for the rest, using $h_1 \leq 2LD^2$ from~\Eqref{ineq:initial-bound} and $\rho < L$, we obtain
    \begin{align*}
        f(x_t) - f^* \leq 2\left(1 - \frac{\omega}{4L}\frac{\delta^2}{D^2}\right)^{\lceil(t-1)/2\rceil}LD^2.
    \end{align*}
    
    In $\nu \in (0,1)$, we have $h_t - h_{t+1} \geq \min\left\{\frac{\nu}{1 + \nu}\left(1 - \frac{\rho}{\mu}\right), \frac{\nu}{1 + \nu}\frac{(\omega\delta^2)^{\frac{1 + \nu}{2\nu}}}{(L(1+\nu)D^2)^{1/\nu}}h_t^{\frac{1 - \nu}{2\nu}}\right\}\cdot h_t$ for at least half of the iterations and $h_{t+1} \leq h_t$ for the rest.
    The initial bound $f(x_1) - f^* \leq (\rho + L)D^2 \leq 2 LD^2$ holds from~\Eqref{ineq:initial-bound} and $\rho < L$.
    We use Lemma~\ref{lemma:contraction-convergence-rate} with $c_0 = 2LD^2$, $c_1 = \frac{\nu}{1 + \nu}\left(1 - \frac{\rho}{\mu}\right)$, $c_2 = \frac{\nu}{1 + \nu} \frac{(\omega\delta^2)^{\frac{1 + \nu}{2\nu}}}{(L(1+\nu)D^2)^{1/\nu}}$, and $\theta_0 = \frac{1 - \nu}{2\nu}$ and obtain the claim.
\end{proof}

When $\phi = \frac{1}{2}\|\cdot\|^2$ and $\nu = 1$, the local linear convergence of Algorithm~\ref{alg:afw} is equivalent to
\begin{align*}
    f(x_t) - f(x^*) \leq \left(1 - \frac{\omega}{2L}\frac{\delta^2}{D_{\Euc}^2}\right)^{\lceil(t-1)/2\rceil}LD_{\Euc}^2.
\end{align*}

\subsection{Local Linear Convergence over Uniformly Convex Sets}\label{subsec:local-linear-convergence-over-uniformly-convex-sets}

In the convex optimization case, \cite{Canon1968-sr} established an early lower bound on the convergence rate of the FW algorithm. However, in the special case of $P$ being strongly convex,~\cite{Garber2015-de} showed that one can improve upon that lower bound. \cite{Kerdreux2021-kc} establish it in the case of $P$ being uniformly convex. We will now carry over this result to establish local linear convergence for the case where $f$ is weakly convex and $L$-smad on $P$.

To this end, we recall the definition of uniformly convex sets.
\begin{definition}[$(\alpha, p)$-uniformly convex set~{\cite[Definition 2.18]{Braun2025-sz},~\cite[Definition 1.1]{Kerdreux2021-kc}}]\label{def:uniformly-convex}
    Let $\alpha$ and $p$ be positive numbers.
    The set $P\subset\R^n$ is $(\alpha,p)$-\emph{uniformly convex} with respect to the norm $\|\cdot\|$ if for any $x, y\in P$, any $\gamma\in[0,1]$, and any $z\in\R^n$ with $\|z\|\leq1$ the following holds:
    \begin{align*}
        y + \gamma(x - y) + \gamma(1-\gamma)\cdot\alpha\|x - y\|^p z \in P.
    \end{align*}
    Moreover, $P$ is said to be \emph{strongly convex} if $P$ is $(\alpha, 2)$-uniformly convex.
\end{definition}

We will use the scaling condition for uniformly convex sets to establish linear convergence. 

\begin{proposition}[Scaling inequality~{\cite[Proposition 2.19]{Braun2025-sz},~\cite[Lemma 2.1]{Kerdreux2021-kc}}]\label{prop:scaling-strongly-convex}
    Let $P$ be a full dimensional compact $(\alpha, p)$-uniformly convex set, $u$ any non-zero vector, and $v = \argmin_{y\in P}\langle u,y\rangle$.
    Then for all $x\in P$
    \begin{align*}
        \frac{\langle u,x -v\rangle}{\|x - v\|^p}\geq\alpha\|u\|.
    \end{align*}
\end{proposition}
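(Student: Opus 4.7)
The plan is to invoke Definition~\ref{def:uniformly-convex} with the linear minimizer $v$ playing the role of $y$, and then use the freedom to choose the admissible perturbation direction $z$ in an adversarial way with respect to $u$. Fix $x \in P$ and $\gamma \in [0,1]$. By $(\alpha, p)$-uniform convexity, for every $z \in \R^n$ with $\|z\| \leq 1$ the point
\[
w_\gamma(z) := v + \gamma(x - v) + \gamma(1-\gamma)\alpha\|x - v\|^p z
\]
lies in $P$. Since $v$ minimizes $\langle u, \cdot\rangle$ over $P$, we must have $\langle u, v\rangle \leq \langle u, w_\gamma(z)\rangle$ for every such $z$, which rearranges to
\[
0 \leq \gamma\langle u, x - v\rangle + \gamma(1-\gamma)\alpha\|x - v\|^p \langle u, z\rangle.
\]

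The key computation is then to pick $z$ so as to make $\langle u, z\rangle$ as negative as possible, and to take a one-sided limit in $\gamma$. Setting $z := -u/\|u\|$ gives $\|z\| = 1$ and $\langle u, z\rangle = -\|u\|$, so dividing the previous display by $\gamma > 0$ yields
\[
(1-\gamma)\alpha\|x-v\|^p \|u\| \leq \langle u, x - v\rangle.
\]
Letting $\gamma \to 0^+$ gives $\alpha\|x-v\|^p \|u\| \leq \langle u, x - v\rangle$. The case $x = v$ is trivial; otherwise, dividing by $\|x-v\|^p$ produces the claimed scaling inequality.

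There is no serious technical obstacle here; the proof is a direct unwrapping of the uniform-convexity definition. The only conceptual point worth highlighting is that Definition~\ref{def:uniformly-convex} permits an \emph{arbitrary} $z$ in the unit ball, and it is this freedom — taking $z$ anti-aligned with $u$ — that converts a purely geometric containment statement into a scalar lower bound on $\langle u, x - v\rangle$. The assumption that $P$ is full-dimensional is used only so that $z$ may range over the full unit ball of $\R^n$; for a lower-dimensional $P$ the same argument carries through within the affine hull of $P$, provided the definition of uniform convexity is interpreted relative to that affine hull.
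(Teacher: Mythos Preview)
Your proof is correct and is precisely the standard argument for this result. Note that the paper does not supply its own proof of this proposition; it is quoted from \cite[Proposition~2.19]{Braun2022-ew} and \cite[Lemma~2.1]{Kerdreux2021-kc}, and the argument there is essentially the one you wrote: plug the minimizer $v$ into the uniform-convexity definition, choose the perturbation $z$ anti-aligned with $u$, and send $\gamma\to0$.
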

Now we establish local linear convergence for weakly convex optimization on uniformly convex sets.

\begin{theorem}[Local linear convergence over uniformly convex sets]\label{theorem:convergence-rate-uniformly-convex-set-weakly-convex}
    Suppose that Assumptions~\ref{assumption:setting} and \ref{assumption:local-convergence} with $\phi = \frac{1}{2}\|\cdot\|^2$ and $\interior\dom\phi = \R^n$ hold and that $P$ is $(\alpha,p)$-uniformly convex set.
    Let $\nabla f$ be bounded away from 0, \ie, $\|\nabla f(x)\| \geq c > 0$ for all $x \in P$.
    Let $D_{\Euc} := \sup_{x, y \in P}\|x - y\|$ be the diameter of $P$.
    Consider the iterates of Algorithm~\ref{alg:fw} with $\gamma_t = \min\left\{\frac{\langle\nabla f(x_t), x_t - v_t\rangle}{L\|x_t - v_t\|^2}, 1\right\}$.
    Then, if $\rho < \mu$ and $\rho \leq L$, it holds that
    \begin{align*}
        f(x_t) - f^* \leq \begin{cases}
            \max\left\{\frac{1}{2}\left(1 + \frac{\rho}{\mu}\right), 1 - \left(1 - \frac{\rho}{\mu}\right)\frac{\alpha c}{2L}\right\}^{t-1}LD_{\Euc}^2 & \text{if}\ p = 2,\\
            \left(\frac{1}{2} + \frac{\rho}{2\mu}\right)^{t-1}LD_{\Euc}^2 & \text{if}\ 1 \leq t \leq t_0, p \geq 2,\\
            \frac{L\left(\left(1-\rho/\mu\right)^{1-p/2}L/\alpha c\right)^{2/(p-2)}}{(1 + (1-\rho/\mu)(1/2-1/p)(t-t_0))^{p/(p-2)}} = \mathcal{O}(1/t^{p/(p-2)}) & \text{if}\ t \geq t_0, p \geq 2,
        \end{cases}
    \end{align*}
    for all $t \geq 1$ where
    \begin{align*}
        t_0 := \max\left\{\left\lfloor\log_{\frac{1}{2}\left(1 + \frac{\rho}{\mu}\right)}\frac{L\left(\left(1-\rho/\mu\right)^{1-p/2}L/\alpha c\right)^{2/(p-2)}}{LD_{\Euc}^2}\right\rfloor + 2, 1\right\}.
    \end{align*}
\end{theorem}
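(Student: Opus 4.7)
The plan is to combine the Bregman short-step progress (Lemma~\ref{lemma:progress-bregman} with $\nu = 1$) with the uniform-convexity scaling inequality (Proposition~\ref{prop:scaling-strongly-convex}) and the weakly-convex primal-gap lower bound (Lemma~\ref{lemma:primal-gap-Frank-Wolfe-gap-weakly}), and to feed the resulting recursion into the contraction-to-rate conversion (Lemma~\ref{lemma:contraction-convergence-rate}). Let $h_t := f(x_t) - f^*$. From Lemma~\ref{lemma:primal-gap-Frank-Wolfe-gap-weakly} together with local $\mu$-quadratic growth,
\begin{align*}
\langle \nabla f(x_t), x_t - v_t\rangle \;\geq\; h_t - \tfrac{\rho}{2}\|x_t - x^*\|^2 \;\geq\; \left(1 - \tfrac{\rho}{\mu}\right) h_t \;\geq\; 0,
\end{align*}
where non-negativity uses $\rho < \mu$. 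I then split on whether the short step saturates at $\gamma_t = 1$ or takes the interior value $\gamma_t = \langle\nabla f(x_t), x_t - v_t\rangle/(L\|x_t - v_t\|^2)$.

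In the saturated case, Lemma~\ref{lemma:progress-bregman} (which applies because $\gamma_t$ lies below the unconstrained optimum) gives $h_t - h_{t+1} \geq \tfrac{1}{2}\langle\nabla f(x_t), x_t - v_t\rangle \geq \tfrac{1}{2}(1 - \rho/\mu) h_t$. In the interior case, the same lemma gives $h_t - h_{t+1} \geq \langle\nabla f(x_t), x_t - v_t\rangle^2 / (2L\|x_t - v_t\|^2)$. Applying Proposition~\ref{prop:scaling-strongly-convex} to $u = \nabla f(x_t)$ and using $\|\nabla f(x_t)\| \geq c$ yields $\|x_t - v_t\|^p \leq \langle\nabla f(x_t), x_t - v_t\rangle/(\alpha c)$, whence
\begin{align*}
h_t - h_{t+1} \;\geq\; \frac{(\alpha c)^{2/p}}{2L}\langle\nabla f(x_t), x_t - v_t\rangle^{2 - 2/p} \;\geq\; \frac{(\alpha c)^{2/p}(1-\rho/\mu)^{2-2/p}}{2L}\, h_t^{2 - 2/p}.
\end{align*}

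Combining the two cases yields a recursion of the form $h_t - h_{t+1} \geq h_t \cdot \min\{c_1, c_2 h_t^{\theta_0}\}$ with $c_1 = \tfrac{1}{2}(1 - \rho/\mu)$, $c_2 = (\alpha c)^{2/p}(1-\rho/\mu)^{2-2/p}/(2L)$, and $\theta_0 = (p - 2)/p \geq 0$. For $p = 2$ we have $\theta_0 = 0$, both bounds are linear in $h_t$, and one reads off $h_{t+1} \leq \max\{\tfrac{1}{2}(1 + \rho/\mu),\, 1 - (1 - \rho/\mu)\alpha c/(2L)\}\, h_t$; together with the standard initial bound $h_1 \leq LD_{\Euc}^2$ (as in the proof of Theorem~\ref{theorem:sublinear-convex}), this delivers the first branch of the claim. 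For $p > 2$, I plug $c_0 = LD_{\Euc}^2$, $c_1$, $c_2$, $\theta_0$ into Lemma~\ref{lemma:contraction-convergence-rate}; since $1 - c_1 = \tfrac{1}{2}(1 + \rho/\mu)$, direct simplification of $(c_1/c_2)^{1/\theta_0}$ and $(1 + c_1\theta_0(t - t_0))^{1/\theta_0}$ reproduces the closed-form expressions $L((1-\rho/\mu)^{1-p/2}L/\alpha c)^{2/(p-2)}$ and $(1 + (1-\rho/\mu)(1/2 - 1/p)(t-t_0))^{p/(p-2)}$ in the theorem.

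The main obstacle is more bookkeeping than conceptual: tracking the exponents $2 - 2/p$, $1/\theta_0 = p/(p-2)$, and the factor $(1 - \rho/\mu)^{1 - p/2}$ through Lemma~\ref{lemma:contraction-convergence-rate} so that they match the stated rate. The one hypothesis doing non-trivial work, and the part that most resists generalization, is the uniform lower bound $\|\nabla f\| \geq c > 0$: without it the scaling inequality would degenerate as $x_t$ approaches an interior stationary point and erase the benefit of uniform convexity of $P$.
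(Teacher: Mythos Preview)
Your argument is correct and follows essentially the same route as the paper: progress lemma with $\nu=1$, the uniform-convexity scaling inequality, the weakly-convex Frank--Wolfe-gap bound combined with local quadratic growth, and then Lemma~\ref{lemma:contraction-convergence-rate} with the same $c_1,c_2,\theta_0$. The only slip is the justification of the initial bound: Theorem~\ref{theorem:sublinear-convex} uses convexity of $f$, so you should instead invoke \eqref{ineq:initial-bound} (weak convexity plus $\rho \leq L$) to obtain $h_1 \leq (\rho + L)D^2 \leq 2LD^2 = LD_{\Euc}^2$, exactly as the paper does.
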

\begin{proof}
    Let $g_t := \langle\nabla f(x_t), x_t - v_t\rangle$ and $h_t := f(x_t) - f^*$.
    Lemma~\ref{lemma:progress-bregman} with $\phi = \frac{1}{2}\|\cdot\|^2$ and $\nu = 1$ is followed by $f(x_t) - f(x_{t+1}) \geq \frac{g_t}{2}\gamma_t$.
    Using Proposition~\ref{prop:scaling-strongly-convex}, we have
    \begin{align*}
        h_t - h_{t+1} &\geq \frac{g_t}{2}\min\left\{\frac{g_t}{L\|x_t - v_t\|^2}, 1\right\}\\
        &\geq\frac{g_t}{2}\min\left\{\frac{g_t^{1-2/p}\alpha^{2/p}\|\nabla f(x_t)\|^{2/p}}{L}, 1\right\}\\
        &\geq\frac{1}{2}\left(h_t - \frac{\rho}{2}\|x - x^*\|^2\right)\min\left\{\left(h_t - \frac{\rho}{2}\|x - x^*\|^2\right)^{1-2/p}\frac{(\alpha c)^{2/p}}{L}, 1\right\}\\
        &\geq\frac{1}{2}\left(1 - \frac{\rho}{\mu}\right)\min\left\{\left(1 - \frac{\rho}{\mu}\right)^{1-2/p}\frac{(\alpha c)^{2/p}}{L}h_t^{1-2/p}, 1\right\}\cdot h_t,
    \end{align*}
    where the third inequality holds from Lemma~\ref{lemma:primal-gap-Frank-Wolfe-gap-weakly} and $\|\nabla f(x)\| \geq c$, and the last inequality holds because of the local quadratic growth property of $f$.
    The initial bound $h_1 \leq LD_{\Euc}^2 = 2 LD^2$ holds from $\rho \leq L$ and~\Eqref{ineq:initial-bound}. For $q = 2$, we have $h_t - h_{t+1} \geq\frac{1}{2}\left(1 - \frac{\rho}{\mu}\right)\min\left\{\frac{\alpha c}{L}, 1\right\}\cdot h_t$, which implies
    \begin{align*}
        h_{t+1} \leq \max\left\{\frac{1}{2}\left(1 + \frac{\rho}{\mu}\right), 1 - \left(1 - \frac{\rho}{\mu}\right)\frac{\alpha c}{2L}\right\}\cdot h_t.
    \end{align*}
    Thus, we have the claim. For $p > 2$, we use Lemma~\ref{lemma:contraction-convergence-rate} with $c_0 = LD_{\Euc}^2$, $c_1 = \frac{1}{2}\left(1 - \frac{\rho}{\mu}\right)$, $c_2 = c_1\cdot\left(1 - \frac{\rho}{\mu}\right)^{1-2/p}\frac{(\alpha c)^{2/p}}{L}$, and $\theta_0 = 1 - 2/p$ and obtain the claim.
\end{proof}

Note that Assumption~\ref{assumption:setting} with $\phi = \frac{1}{2}\|\cdot\|^2$ and $\interior\dom\phi = \R^n$ holds when $f$ is $L$-smooth over $P$.
If $\rho \leq L$ does not hold, we can use the initial bound $h_1 \leq \frac{\rho + L}{2}D_{\Euc}^2$ from~\Eqref{ineq:initial-bound}.
In that case, a local linear rate in Theorem~\ref{theorem:convergence-rate-uniformly-convex-set-weakly-convex} is unchanged.
Moreover, we have local linear convergence without assuming $\|\nabla f(x)\| > 0$.
\begin{theorem}[Local linear convergence over uniformly convex sets without $\|\nabla f(x)\| > 0$]\label{theorem:convergence-rate-uniformly-convex-set-weakly-convex-another}
    Suppose that Assumptions~\ref{assumption:setting} and \ref{assumption:local-convergence} with $\phi = \frac{1}{2}\|\cdot\|^2$ and $\interior\dom\phi = \R^n$ hold and that $P$ is $(\alpha,p)$-uniformly convex set.
    Let $D_{\Euc} := \sup_{x, y \in P}\|x - y\|$ be the diameter of $P$.
    Consider the iterates of Algorithm~\ref{alg:fw} with $\gamma_t = \min\left\{\frac{\langle\nabla f(x_t), x_t - v_t\rangle}{L\|x_t - v_t\|^2}, 1\right\}$.
    Then, if $\rho < \mu$ and $\rho \leq L$, it holds that
    \begin{align*}
        f(x_t) - f^* \leq \begin{cases}
            \left(\frac{1}{2} + \frac{\rho}{2\mu}\right)^{t-1}LD_{\Euc}^2 & \text{if}\ 1 \leq t \leq t_0, p \geq 2,\\
            \frac{\left(\left(1-\rho/\mu\right)^{2-p}Lc^2/\alpha^2\right)^{1/(p-1)}}{(1 + (1/2)\cdot(1-\rho/\mu)(1-1/p)(t-t_0))^{p/(p-1)}} = \mathcal{O}(1/t^{p/(p-1)}) & \text{if}\ t \geq t_0, p \geq 2,
        \end{cases}
    \end{align*}
    for all $t \geq 1$ where $c = \frac{\sqrt{2\mu}}{\mu - \rho}$ and
    \begin{align*}
        t_0 := \max\left\{\left\lfloor\log_{\frac{1}{2}\left(1 + \frac{\rho}{\mu}\right)}\frac{L\left(\left(1-\rho/\mu\right)^{2-p}Lc^2/\alpha^2\right)^{1/(p-1)}}{LD_{\Euc}^2}\right\rfloor + 2, 1\right\}.
    \end{align*}
\end{theorem}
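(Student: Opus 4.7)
The strategy is to mimic the proof of Theorem~\ref{theorem:convergence-rate-uniformly-convex-set-weakly-convex}, replacing the absent lower bound $\|\nabla f(x)\| \geq c > 0$ with the local Polyak--\L{}ojasiewicz inequality provided, under $\rho$-weak convexity and local $\mu$-quadratic growth with $\rho < \mu$, by Remark~\ref{remark:quad-growth-pl-ineq}: namely $\|\nabla f(x_t)\|^2 \geq h_t/c^2$ with $c = \sqrt{2\mu}/(\mu-\rho)$ on the sublevel set $[f\leq f^*+\zeta]\cap P$. Writing $g_t := \langle \nabla f(x_t), x_t - v_t\rangle$ and $h_t := f(x_t) - f^*$, combining Lemma~\ref{lemma:progress-bregman} (with $\phi = \frac{1}{2}\|\cdot\|^2$ and $\nu = 1$) with the uniform convexity scaling inequality (Proposition~\ref{prop:scaling-strongly-convex} applied to $u = \nabla f(x_t)$, $v = v_t$) produces, exactly as in the previous theorem's proof,
\begin{align*}
h_t - h_{t+1} \geq \frac{g_t}{2}\min\left\{\frac{g_t^{1-2/p}\alpha^{2/p}\|\nabla f(x_t)\|^{2/p}}{L},\ 1\right\}.
\end{align*}

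The next step is to feed two lower bounds into this inequality. First, weak convexity plus local quadratic growth give $g_t \geq \langle\nabla f(x_t), x_t - x^*\rangle \geq (1 - \rho/\mu) h_t$ as in~\eqref{ineq:weakly-quadratic-growth}. Second, the local PL inequality supplies $\|\nabla f(x_t)\|^{2/p} \geq h_t^{1/p}/c^{2/p}$. Substituting both yields the recursion
\begin{align*}
h_t - h_{t+1} \geq \frac{1 - \rho/\mu}{2}\, h_t \min\!\left\{\frac{(1-\rho/\mu)^{1-2/p}\alpha^{2/p}}{L\,c^{2/p}}\, h_t^{(p-1)/p},\ 1\right\}.
\end{align*}
Finally, I invoke Lemma~\ref{lemma:contraction-convergence-rate} with $c_0 = LD_{\Euc}^2$ (the initial bound from the extended descent lemma together with $\rho\leq L$, cf.~\eqref{ineq:initial-bound}), $c_1 = (1-\rho/\mu)/2$, $c_2 = c_1\cdot (1-\rho/\mu)^{1-2/p}\alpha^{2/p}/(L c^{2/p})$, and $\theta_0 = (p-1)/p$. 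The linear phase $(t\leq t_0)$ and the subsequent $\O(1/t^{p/(p-1)})$ phase with the stated transition time $t_0$ then follow directly.

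The main obstacle is the local nature of the argument: both the quadratic growth bound and the PL inequality require $x_t \in [f\leq f^* + \zeta]\cap P$. This is handled by observing that, since $g_t \geq 0$ by the definition of the LMO, Lemma~\ref{lemma:progress-bregman} gives monotone non-increase $h_{t+1}\leq h_t$; hence once an iterate lies in the sublevel set, all subsequent iterates remain there, so both local inequalities can be applied uniformly along the trajectory. The only other subtlety is checking that the initial estimate $h_1 \leq LD_{\Euc}^2$ still applies verbatim when $\rho \leq L$ (otherwise one replaces it by $h_1 \leq \tfrac{\rho+L}{2}D_{\Euc}^2$, which alters only the constant and leaves the rate unchanged).
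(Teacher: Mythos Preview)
Your proposal is correct and follows essentially the same route as the paper's own proof: both start from the progress bound of Lemma~\ref{lemma:progress-bregman} combined with the uniform-convexity scaling of Proposition~\ref{prop:scaling-strongly-convex}, then replace the lower bound on $\|\nabla f(x_t)\|$ used in Theorem~\ref{theorem:convergence-rate-uniformly-convex-set-weakly-convex} by the local PL inequality of Remark~\ref{remark:quad-growth-pl-ineq}, bound $g_t$ below via weak convexity plus quadratic growth, and finish with Lemma~\ref{lemma:contraction-convergence-rate} using exactly the constants $c_0 = LD_{\Euc}^2$, $c_1 = \tfrac{1}{2}(1-\rho/\mu)$, $c_2 = c_1(1-\rho/\mu)^{1-2/p}(\alpha/c)^{2/p}/L$, $\theta_0 = 1-1/p$. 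Your explicit remark that monotonicity of $\{h_t\}$ keeps the iterates in the sublevel set (so the local hypotheses remain valid) is a point the paper leaves implicit.
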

\begin{proof}
    Using an argument similar to Theorem~\ref{theorem:convergence-rate-uniformly-convex-set-weakly-convex}, we obtain
    \begin{align*}
        h_t - h_{t+1} &\geq \frac{g_t}{2}\min\left\{\frac{g_t}{L\|x_t - v_t\|^2}, 1\right\}\\
        &\geq\frac{g_t}{2}\min\left\{\frac{g_t^{1-2/p}\alpha^{2/p}\|\nabla f(x_t)\|^{2/p}}{L}, 1\right\}\\
        &\geq\frac{1}{2}\left(h_t - \frac{\rho}{2}\|x - x^*\|^2\right)\min\left\{\left(h_t - \frac{\rho}{2}\|x - x^*\|^2\right)^{1-2/p}\frac{\alpha^{2/p}(h_t/ c^2)^{1/p}}{L}, 1\right\}\\
        &\geq\frac{1}{2}\left(1 - \frac{\rho}{\mu}\right)\min\left\{\left(1 - \frac{\rho}{\mu}\right)^{1-2/p}\frac{(\alpha c^{-1})^{2/p}}{L}h_t^{1-1/p}, 1\right\}\cdot h_t,
    \end{align*}
    where the third inequality holds from the PL inequality~\Eqref{ineq:pl} with $c = \frac{\sqrt{2\mu}}{\mu - \rho}$.
    Therefore, we use Lemma~\ref{lemma:contraction-convergence-rate} with $c_0 = LD_{\Euc}^2$, $c_1 = \frac{1}{2}\left(1 - \frac{\rho}{\mu}\right)$, $c_2 = c_1\cdot\left(1 - \frac{\rho}{\mu}\right)^{1-2/p}\frac{(\alpha c^{-1})^{2/p}}{L}$, and $\theta_0 = 1 - 1/p$ and obtain the claim.
\end{proof}

\section{Experiments}\label{sec:experiments-app}
We use the following notation:
\begin{itemize}
    \item \BregFW: the FW algorithm with the adaptive Bregman step-size strategy (Algorithm~\ref{alg:adaptive-bregman}, our proposed)
    \item \BregAFW: the away-step FW algorithm with the adaptive Bregman step-size strategy (Algorithm~\ref{alg:afw} using Algorithm~\ref{alg:adaptive-bregman}, \ie, $L_t, \nu_t, \gamma_t \leftarrow\StepSize(f,\phi,x_t,v_t,L_{t-1},\gamma_{t,\max})$, our proposed update)
    \item \EucFW: the FW algorithm with the adaptive (Euclidean) step-size strategy~\citep{Pedregosa2020-ay}
    \item \EucAFW: the away-step FW algorithm with the adaptive (Euclidean) step-size strategy~\citep{Pedregosa2020-ay}
    \item \ShortFW: the FW algorithm with the (Euclidean) short step
    \item \ShortAFW: the away-step FW algorithm with the (Euclidean) short step
    \item \OpenFW: the FW algorithm with the open loop with $\gamma_t = \frac{2}{2 + t}$
    \item \OpenAFW: the away-step FW algorithm with the open loop with $\gamma_t = \frac{2}{2 + t}$
    \item \MD: the mirror descent~\citep{Nemirovskij1983-sm}
    \item \ProjGD: the projected gradient descent algorithm (see, \eg, \cite{Beck2017-qc})
\end{itemize}

\subsection{Nonnegative Linear Inverse Problems}\label{subsec:nonnegative-linear-inverse}
Given a nonnegative matrix $A \in \R_+^{m \times n}$ and a nonnegative vector $b \in \R_+^m$, the goal of nonnegative (Poisson) linear inverse problems is to recover a signal $x \in \R_+^n$ such that $Ax \simeq b$.
This class of problems has been studied in image deblurring~\citep{Bertero2009-jq} and positron emission tomography~\citep{Vardi1985-de} as well as optimization~\citep{Bauschke2017-hg,Takahashi2024-wz}. 
Since the dimension of $x$ is often larger than the number of observations $m$, the system is indeterminate.
From this point of view, we consider the constraint $\Delta_n := \Set{x \in\R_+^n}{\sum_{j=1}^n x_j \leq 1}$.
Recovering $x$ can be formulated as a minimization problem:
\begin{align}
    \min_{x \in \Delta_n} \quad f(x) := d(Ax, b), \label{prob:nonnegative-linear}
\end{align}
where $d(x, y) := \sum_{i=1}^m \left(x_i\log\frac{x_i}{y_i} + y_i - x_i\right)$ is the KL  divergence.
Problem~\Eqref{prob:nonnegative-linear} is convex, while $\nabla f$ is not Lipschitz continuous on $\R_+^n$.
The pair $(f, \phi)$ is $L$-smad on $\R_+^n$ with $\phi(x) = \sum_{j=1}^nx_j\log x_j$ and $L \geq \max_{1 \leq j \leq n}\sum_{i =1}^m a_{ij}$ from~\cite[Lemma 8]{Bauschke2017-hg}.

We compared \BregFW with \EucFW, \ShortFW, \OpenFW, and the mirror descent algorithm (\MD)~\citep{Nemirovskij1983-sm}.
The subproblem of \MD can be solved in closed-form for $\Set{x \in\R_+^n}{\sum_{j=1}^n x_j = 1}$ by~\cite[Example 3.71]{Beck2017-qc}, and this can be readily extended to $\Delta_n$.
We used $1000$ as maximum iteration limit and we generated $\tilde{A}$ from an i.i.d.\ normal distribution and set $a_{ij} = |\tilde{a}_{ij}|/\sum_{i = 1}^m |\tilde{a}_{ij}|$.
We also generated $\tilde{x}$ from an i.i.d.\ uniform distribution in $[0,1]$ and set the ground truth $x^* = 0.8\tilde{x}/\sum_{j=1}^n \tilde{x}_j$ so that $x^* \in \interior \Delta_n$ (random seed 1234).
All components of the initial point $x_0$ were $1/n$.
For $(m, n) = (100, 1000)$, Figure~\ref{fig:nonnegative-linear-100-1000-0.8} shows the primal gap $f(x_t) - f^*$ and the FW gap $\max_{v\in P}\langle\nabla f(x_t), x_t - v\rangle$ per iteration (left) and the primal gap per second (right).
Table~\ref{tab:nonnegative-linear-100-1000-0.8} shows the average values of the primal gap, the FW gap, and computational time over 20 different instances for $(m, n) = (100, 1000)$.
Here, \BregFW outperformed other algorithms, both in iterations and time.

\begin{figure}[!tp]
    \centering
    \includegraphics[width=0.8\linewidth]{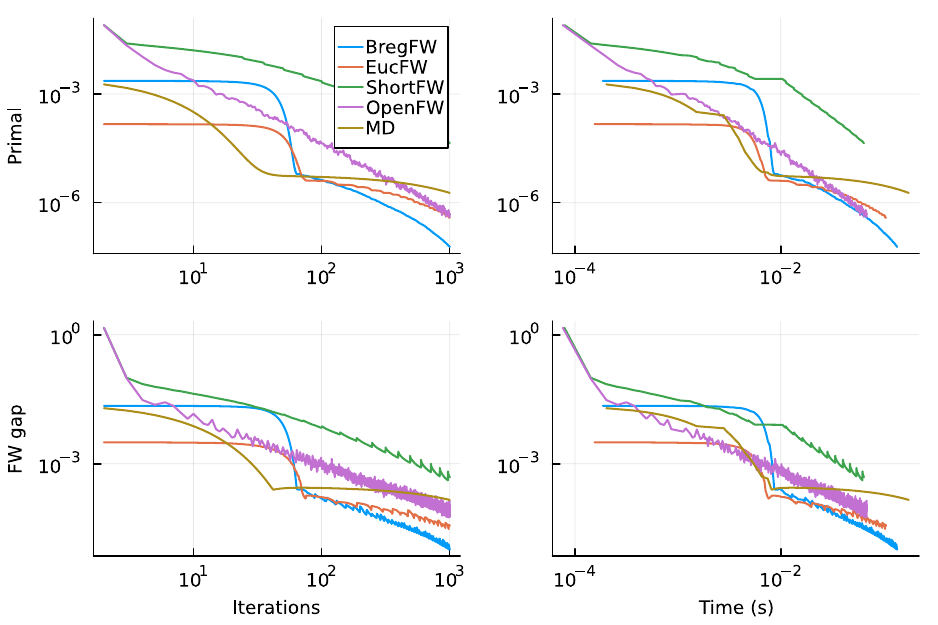}
    \caption{Log-log plot of primal and FW gaps on nonnegative linear inverse problem for $(m, n) = (100,1000)$.}
    \label{fig:nonnegative-linear-100-1000-0.8}
\end{figure}

\begin{table}[!tp]
    \centering
    \caption{Average values of the primal gap, the FW gap, and computational time (s) over 20 different instances of nonnegative linear inverse problems for $(m, n) = (100,1000)$. The results are reported as mean $\pm$ standard deviation.}
 \begin{tabular}{crrr}
\toprule
algorithm & primal gap & FW gap & time (s)\\
\midrule
\BregFW & \textbf{6.9637e-08 $\pm$ 2.0e-08} & \textbf{1.1455e-05 $\pm$ 1.4e-06} & 1.4577e-01 $\pm$ 4.1e-02\\
\EucFW & 3.0287e-07 $\pm$ 9.4e-08 & 2.9223e-05 $\pm$ 4.1e-06 & 1.0778e-01 $\pm$ 5.8e-03\\
\ShortFW & 4.7470e-05 $\pm$ 3.5e-06 & 4.6135e-04 $\pm$ 7.2e-05 & 6.2518e-02 $\pm$ 3.6e-03\\
\OpenFW & 4.9576e-07 $\pm$ 6.3e-08 & 8.5382e-05 $\pm$ 1.3e-05 & 6.5036e-02 $\pm$ 6.3e-03\\
\MD & 2.2494e-06 $\pm$ 3.1e-07 & 1.7214e-04 $\pm$ 1.9e-05 & 1.8885e-01 $\pm$ 6.4e-03\\
\bottomrule
\end{tabular}
    \label{tab:nonnegative-linear-100-1000-0.8}
\end{table}

\subsection{\texorpdfstring{$\ell_p$}{lp} Loss Problem}

\begin{figure}[!tp]
    \centering
    \includegraphics[width=0.8\linewidth]{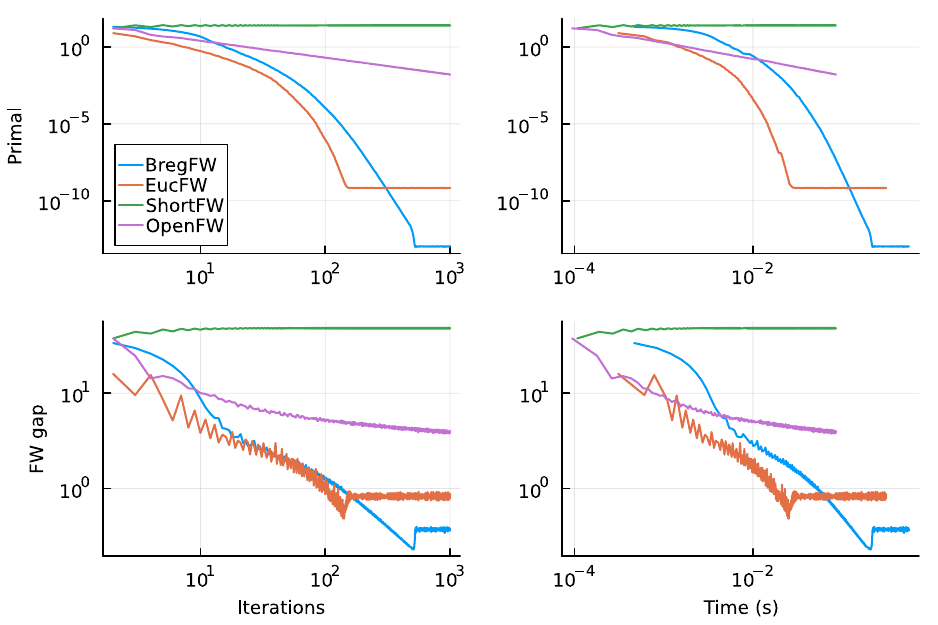}
    \caption{Log-log plot of primal and FW gaps on the $\ell_p$ loss problem for $(m, n) = (1000,100)$.}
    \label{fig:lp-loss-1000-100-0.8}
\end{figure}

\begin{table}[!t]
    \centering
    \caption{Average values of the primal gap, FW gap, and computational time (s) over 20 different instances of $\ell_p$ loss problems for $(m, n) = (1000,100)$. The results are reported as mean $\pm$ standard deviation.}
    \begin{tabular}{crrr}
\toprule
algorithm & primal gap & FW gap & time (s)\\
\midrule
\BregFW & \textbf{1.0589e-13 $\pm$ 8.6e-15} & \textbf{3.7686e-01 $\pm$ 8.6e-03} & 5.6037e-01 $\pm$ 1.8e-02\\
\EucFW & 6.2980e-10 $\pm$ 9.9e-11 & 8.5940e-01 $\pm$ 1.0e-01 & 3.1421e-01 $\pm$ 7.4e-03\\
\ShortFW & 2.4697e+01 $\pm$ 1.2e+00 & 4.7621e+01 $\pm$ 8.6e-01 & 9.2352e-02 $\pm$ 3.7e-03\\
\OpenFW & 1.6961e-02 $\pm$ 9.9e-04 & 4.0077e+00 $\pm$ 1.0e-01 & 8.7292e-02 $\pm$ 3.5e-03\\
\bottomrule
\end{tabular}
    \label{tab:lp-loss-1000-100-0.8}
\end{table}

We use an $\ell_2$ norm ball as a constraint, \ie, $P = \Set{x \in \R^n}{\|x\| \leq 1}$.
We compared \BregFW with \EucFW, \ShortFW, and \OpenFW.
We generated $A$ from an i.i.d. normal distribution and normalized it so that $\|a_i\| = 1$.
We also generated $\tilde{x}$ from an i.i.d. normal distribution and set $x^* = 0.8 \tilde{x}/ \|\tilde{x}\|$ to ensure $x^* \in \interior P$ (random seed 1234).
The initial point $x_0$ was generated by computing an extreme point of $P$ that minimizes the linear approximation of $f$.
For $(n,m) = (100, 100)$ and $p = 1.1$, Figure~\ref{fig:lp-loss-1000-100-0.8} shows the primal gap $f(x_t) - f^*$ and the FW gap $\max_{v\in P} \langle\nabla f(x_t), x_t - v\rangle$ per iteration and those gaps per second up to 1000 iterations.
Table~\ref{tab:lp-loss-1000-100-0.8} also shows the average performance over 20 different instances.
\BregFW outperformed the other algorithms.
Since $\nabla f$ is not Lipschitz continuous, \ShortFW did not reduce the primal and FW gaps.

\subsection{Phase Retrieval}
We show the average performance for the setting of Section~\ref{subsec:phase-retrieval}.
Tables~\ref{tab:phase-ret-10000-10000-2000} and~\ref{tab:phase-ret-20000-10000-2000} show the average performance over 20 different instances of $(m, n) = (1000, 10000)$ and $(m, n) = (2000, 10000)$, respectively.

We also compared \BregAFW with \EucAFW, \ShortAFW, and \OpenAFW.
In only this setting, we generated $x^*$ from an i.i.d.\ uniform distribution in $[0,1]$ and did not normalize it (random seed 1234); that is, $x^*$ might be in the face of $P$.
Figure~\ref{fig:phase-ret-away-200-200-110} shows another setting's results for $(m, n) = (200, 200)$ and $K = 110$.
Table~\ref{tab:phase-ret-away-200-200-110} shows the average performance over 20 different instances for $(m, n) = (200, 200)$ and $K = 110$.
The primal gap by \BregAFW is the smallest among these algorithms, while \ShortAFW has the smallest value of the FW gap. 
\begin{figure}[!tp]
    \centering
    \includegraphics[width=0.8\linewidth]{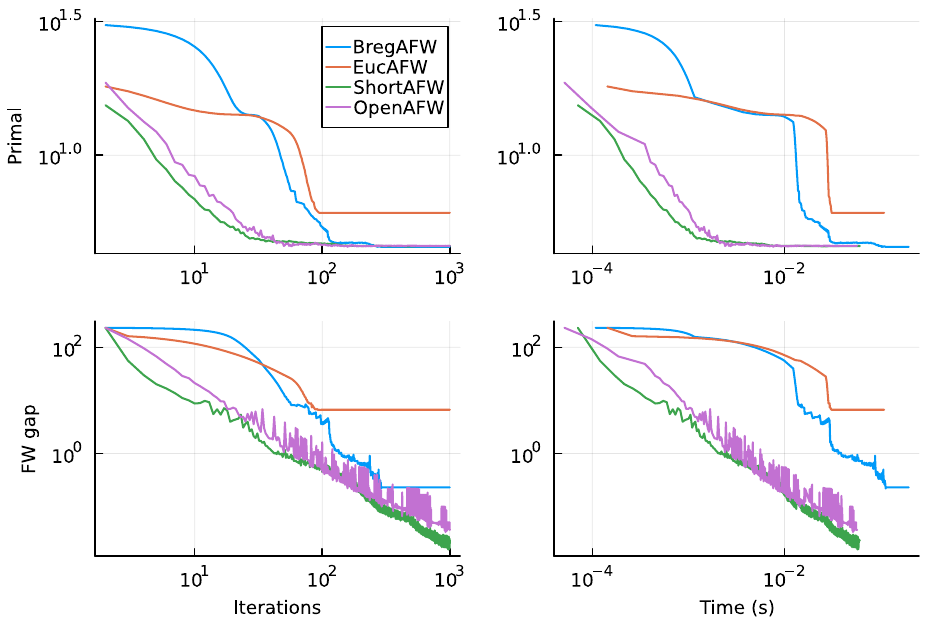}
    \caption{Log-log plot of primal and FW gaps on phase retrieval for $(m, n) = (200,200)$ and $K = 110$ via AFW.}
    \label{fig:phase-ret-away-200-200-110}
\end{figure}
\begin{table}[!t]
    \caption{Average values of the primal gap, FW gap, and computational time (s) over 20 different instances of phase retrieval for $(m, n) = (1000,10000)$ and $K = 2000$. The results are reported as mean $\pm$ standard deviation.}
    \label{tab:phase-ret-10000-10000-2000}
    \centering
\begin{tabular}{crrr}
\toprule
algorithm & primal gap & FW gap & time\\
\midrule
\BregFW & \textbf{1.3107e-11 $\pm$ 1.1e-12} & \textbf{2.1518e-02 $\pm$ 5.9e-02} & 4.6990e+01 $\pm$ 1.5e+01\\
\EucFW & 1.7170e+00 $\pm$ 3.9e+00 & 2.4612e+02 $\pm$ 4.5e+02 & 4.4986e+01 $\pm$ 3.2e+01\\
\ShortFW & 3.2739e+05 $\pm$ 1.9e+04 & 4.2232e+04 $\pm$ 5.0e+02 & 1.0942e+01 $\pm$ 3.2e-01\\
\OpenFW & 5.2496e-10 $\pm$ 3.9e-11 & 4.3802e+00 $\pm$ 5.5e-02 & 6.0353e+01 $\pm$ 1.6e+00\\
\bottomrule
\end{tabular}
\end{table}

\begin{table}[!t]
    \caption{Average values of the primal gap, FW gap, and computational time (s) over 20 different instances of phase retrieval for $(m, n) = (2000,10000)$ and $K = 2000$. The results are reported as mean $\pm$ standard deviation.}
    \label{tab:phase-ret-20000-10000-2000}
    \centering
\begin{tabular}{crrr}
\toprule
algorithm & primal gap & FW gap & time\\
\midrule
\BregFW & \textbf{6.6743e-12 $\pm$ 5.5e-13} & \textbf{9.5656e-08 $\pm$ 7.9e-09} & 7.2592e+01 $\pm$ 8.8e+00\\
\EucFW & 3.2499e+00 $\pm$ 7.1e+00 & 3.1304e+02 $\pm$ 4.8e+02 & 1.4348e+02 $\pm$ 4.3e+01\\
\ShortFW & 6.0937e+04 $\pm$ 2.3e+03 & 2.5461e+04 $\pm$ 2.3e+02 & 1.2648e+01 $\pm$ 2.4e-01\\
\OpenFW & 9.3513e-11 $\pm$ 4.9e-12 & 2.6628e+00 $\pm$ 1.7e-02 & 7.1916e+01 $\pm$ 1.3e+00\\
\bottomrule
\end{tabular}
\end{table}

\begin{table}[!t]
    \centering
     \caption{Average values of the primal gap, FW gap, and computational time (s) over 20 different instances of phase retrieval for $(m, n) = (200,200)$ and $K = 110$ via away-step FW algorithms. The results are reported as mean $\pm$ standard deviation.}
\begin{tabular}{crrr}
\toprule
algorithm & primal gap & FW gap & time (s)\\
\midrule
\BregAFW & \textbf{4.3430e+00 $\pm$ 1.7e+00} & 9.7578e-01 $\pm$ 9.1e-01 & 2.3336e-01 $\pm$ 6.4e-02\\
\EucAFW & 4.3968e+00 $\pm$ 1.7e+00 & 1.2212e+00 $\pm$ 2.3e+00 & 3.9421e-01 $\pm$ 1.6e-01\\
\ShortAFW & 4.4092e+00 $\pm$ 1.7e+00 & \textbf{2.5978e-02 $\pm$ 7.3e-03} & 7.3540e-02 $\pm$ 3.5e-02\\
\OpenAFW & 4.4094e+00 $\pm$ 1.7e+00 & 7.1992e-02 $\pm$ 4.9e-02 & 5.8998e-02 $\pm$ 2.4e-03\\
\bottomrule
\end{tabular}
    \label{tab:phase-ret-away-200-200-110}
\end{table}
\subsection{Low-Rank Minimization}\label{subsec:low-rank-minimization}
Given a symmetric matrix $M \in \R^{n \times n}$, our goal is to find $X \in \R^{n \times r}$ such that $M\simeq X X^\T$.
This is accomplished by minimizing the function
\begin{align*}
    \min_{X \in P}\quad f(X) := \frac{1}{2}\|X X^\T - M\|_F^2,
\end{align*}
where $P \subset \R^{n \times r}$.
We assume that $r \leq n$.
This problem is known as low-rank minimization~\citep{Dragomir2021-rv}.
In this paper, we define $P = \Set{X \in \R^{n \times r}}{\|X\|_* \leq \xi}$, where $\|\cdot\|_*$ denotes the nuclear norm and $\xi \in \R_+$ for the low-rank assumption.

We define
\begin{align*}
    \phi(X) = \frac{1}{4}\|X\|_F^4 + \frac{1}{2}\|X\|_F^2.
\end{align*}
There exists a constant $L$ such that the pair $(f, \phi)$ is $L$-smad on $\R^n$~\citep{Dragomir2021-rv}.
Additionally, $f(X)$ is weakly convex on any compact set due to Proposition~\ref{prop:twice-cont-diff-weakly-conv}, which follows from the twice continuous differentiability of $f$.

We also compared \BregFW with \EucFW, \ShortFW, and \OpenFW.
The parameter settings are the same as those in the previous subsection.
We generated $X^*$ from an i.i.d.\ uniform distribution in $[0, 1]$, normalized each column of $X^*$, and set $M = X^* (X^*)^\T$ (random seed 42).
The initial point $X_0$ was generated from an i.i.d.\ uniform distribution in $[0,1]$.
We set $\xi = 10\lambda_{\max}(M)$ for $P$.
Figure~\ref{fig:low-rank-1000-20} shows the primal and FW gaps per iteration and gaps per second for $(n, r) = (1000, 20)$ up to the 1000th iteration.
Table~\ref{tab:low-rank-1000-20} presents the average performance over 20 different instances for $(n, r) = (1000, 20)$.
\BregFW performed slightly better than \EucFW.
\OpenFW also performed as fast as \BregFW and \EucFW, but its performance was unstable.
\ShortFW did not converge due to the lack of Lipschitz continuity of $\nabla f$. 

\begin{figure}
    \centering
    \includegraphics[width=0.8\linewidth]{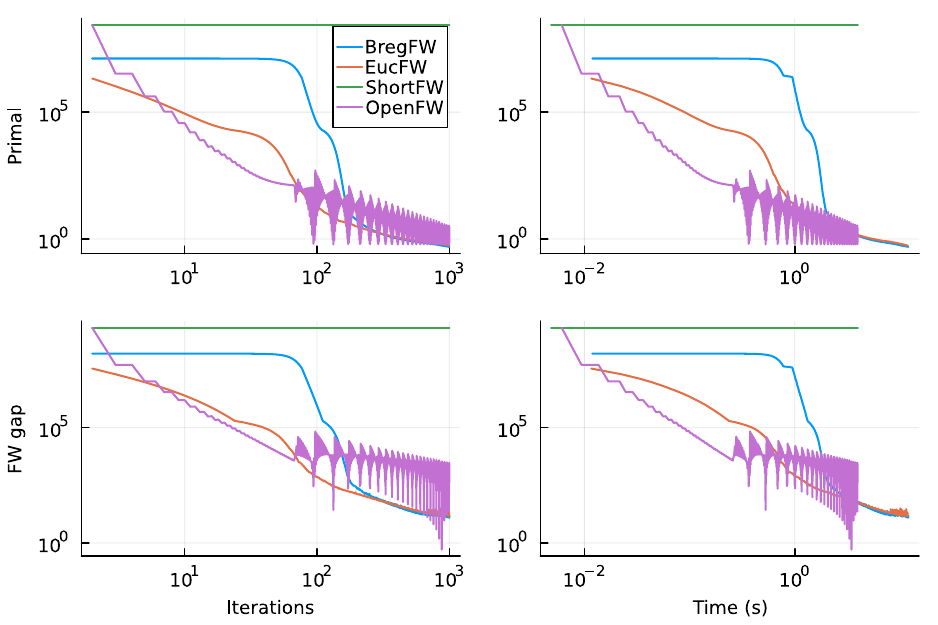}
    \caption{Log-log plot of primal and FW gaps on low-rank minimization for $(n, r) = (1000, 20)$.}
    \label{fig:low-rank-1000-20}
\end{figure}

\begin{table}[!t]
    \centering
    \caption{Average values of the primal gap, FW gap, and computational time (s) over 20 different instances of low-rank minimization for $(n, r) = (1000, 20)$. The results are reported as mean $\pm$ standard deviation.}
\begin{tabular}{crrr}
\toprule
algorithm & primal gap & FW gap & time (s)\\
\midrule
\BregFW & \textbf{5.6519e-01 $\pm$ 4.2e-02} & \textbf{1.4727e+01 $\pm$ 1.0e+00} & 1.2602e+01 $\pm$ 4.5e-01\\
\EucFW & 5.7765e-01 $\pm$ 4.6e-02 & 1.7237e+01 $\pm$ 2.0e+00 & 1.2280e+01 $\pm$ 3.8e-01\\
\ShortFW & 2.7101e+08 $\pm$ 1.9e+06 & 2.1694e+09 $\pm$ 1.5e+07 & 4.1379e+00 $\pm$ 7.9e-02\\
\OpenFW & 1.9782e+00 $\pm$ 3.6e-01 & 2.1212e+03 $\pm$ 3.1e+02 & 4.1770e+00 $\pm$ 1.7e-01\\
\bottomrule
\end{tabular}
    \label{tab:low-rank-1000-20}
\end{table}

\subsection{Nonnegative Matrix Factorization}\label{subsec:nmf}
Given a nonnegative matrix $V \in \R_+^{m \times n}$, nonnegative matrix factorization (NMF) aims to find nonnegative matrices $W \in \R_+^{m \times r}$ and $H \in \R_+^{r \times n}$ such that $V \simeq WH$.
NMF can be formulated as a minimization problem of the loss function that measures the difference between $V$ and $WH$, \ie, $\min_{(W, H) \in P} f(W, H) := \frac{1}{2}\|WH - V\|_F^2$, where $P$ is a compact convex subset of $\R_+^{m \times r} \times \R_+^{r \times n}$.
The objective function $f$ is weakly convex over $P$ due to Proposition~\ref{prop:twice-cont-diff-weakly-conv}.
The gradient $\nabla f$ is not Lipschitz continuous, while $(f, \phi)$ is smooth adaptable~\citep{Mukkamala2019-mk} with $\phi(W, H) = \frac{1}{4}(\|W\|_F^2 + \|H\|_F^2)^2 + \frac{1}{2}(\|W\|_F^2 + \|H\|_F^2)$.

We used a box constraint $P = \Set{(W, H)\in\R^{m \times r} \times \R^{r \times n}}{ 0 \leq W_{lj} \leq 3, 0 \leq H_{lj} \leq 1}$.
We compared \BregFW with \EucFW because \ShortFW and \OpenFW stopped at the 2nd iteration.
We generated $W^*$ from an i.i.d.\ uniform distribution in $[0,1]$ and normalized each column of $W^*$ (random seed 42).
We also generated $H^*$ from an i.i.d.\ Dirichlet distribution.
The initial point $(W_0, H_0)$ was generated from an i.i.d.\ uniform distribution in $[0,1]$.
Figure~\ref{fig:nmf-100-5000-20} shows the primal and FW gaps for $(m, n, r) = (100, 5000, 20)$ up to the 5000th iteration.
Table~\ref{tab:nmf-100-5000-20} shows the average performance over 20 different instances for $(m, n, r) = (100, 5000, 20)$ up to the 5000th iteration.
\BregFW is slightly better than \EucFW in terms of the primal gap, while the FW gap for \EucFW is smaller than that for \BregFW.

Furthermore, we consider real-world data using the MovieLens 100K Dataset.
We set $P = \Set{(W, H)\in\R^{m \times r} \times \R^{r \times n}}{ 0 \leq W_{lj} \leq 5.0, \|H\|_*\leq \xi}$.
For $\phi(W, H) = \frac{3}{4}(\|W\|_F^2 + \|H\|_F^2)^2 + \frac{\|V\|_F}{2}(\|W\|_F^2 + \|H\|_F^2)$ by~\cite[Proposition 2.1]{Mukkamala2019-mk}, $(f,\phi)$ is also $L$-smooth adaptable.
Figure~\ref{fig:nmf-movielens} shows the primal and FW gaps up to the 1000th iteration with $\xi=10\sqrt{\lambda_{\max}(VV^\T)}$, where $\lambda_{\max}(\cdot)$ is the largest eigenvalue.
The primal gap of \BregFW is 6.041269e+05, and that of \EucFW is 6.041272e+05.
In this setting, the primal and FW gaps of \BregFW are slightly better than those of \EucFW.

\begin{figure}
    \begin{minipage}[b]{0.49\linewidth}
        \centering
        \includegraphics[width=\linewidth]{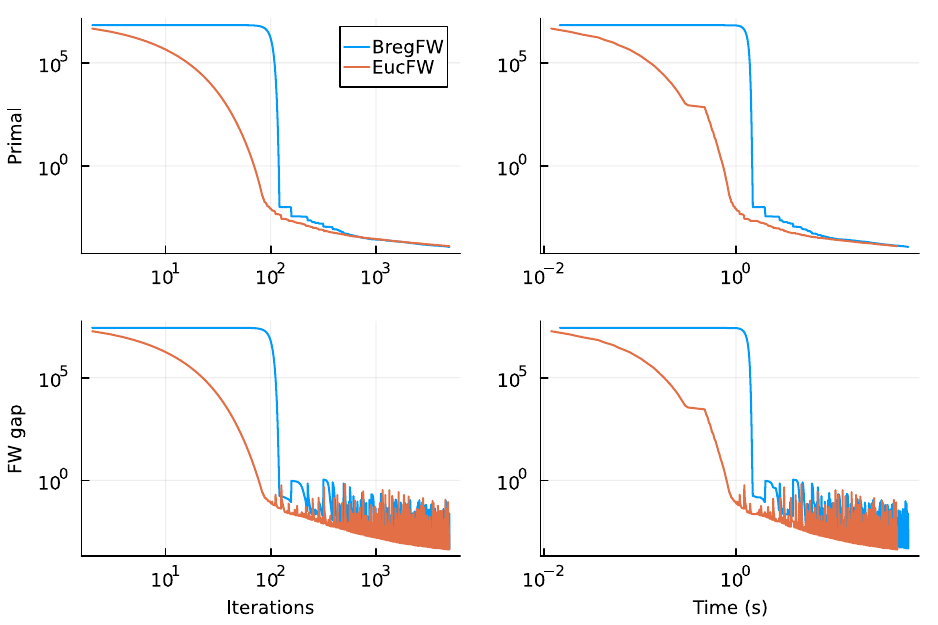}
        \caption{Log-log plot of the primal and the FW gaps on NMF for $(m, n, r) = (100, 5000, 20)$.}
        \label{fig:nmf-100-5000-20}
    \end{minipage}
    \hfill
    \begin{minipage}[b]{0.49\linewidth}
        \centering
        \includegraphics[width=\linewidth]{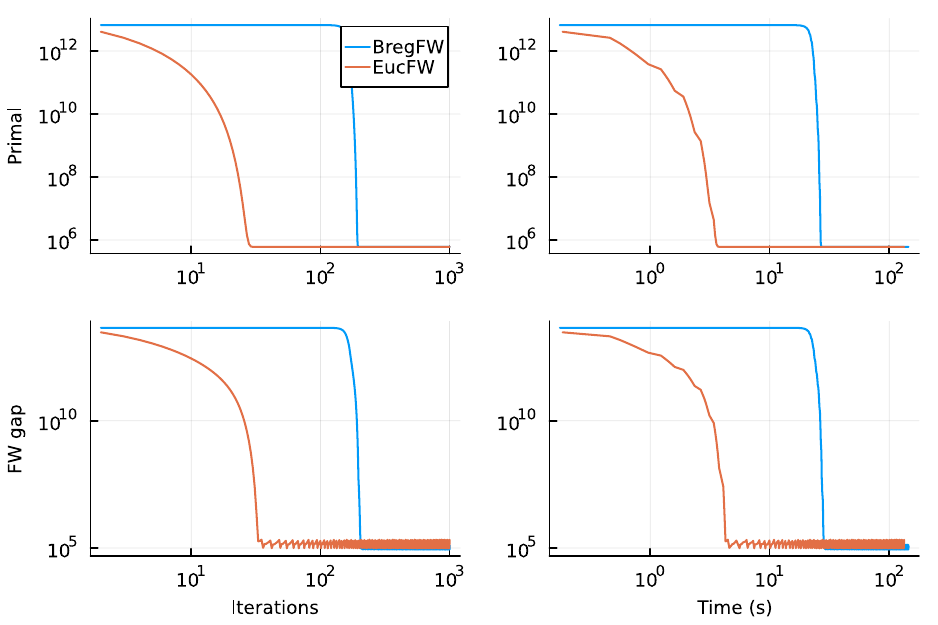}
        \caption{Log-log plot of the primal and the FW gaps on NMF for MovieLens 100K Dataset.}
        \label{fig:nmf-movielens}
    \end{minipage}
\end{figure}

\begin{table}[!t]
    \centering
    \caption{Average values of the primal gap, FW gap, and computational time (s) over 20 different instances of NMF for $(m, n, r) = (100, 5000, 20)$. The results are reported as mean $\pm$ standard deviation.}
\begin{tabular}{crrr}
\toprule
algorithm & primal gap & FW gap & time (s)\\
\midrule
\BregFW & \textbf{1.2017e-04 $\pm$ 5.7e-06} & 1.1515e-02 $\pm$ 1.4e-02 & 5.0983e+01 $\pm$ 1.1e+00\\
\EucFW & 1.2819e-04 $\pm$ 4.6e-06 & \textbf{7.6259e-04 $\pm$ 8.0e-04} & 4.3592e+01 $\pm$ 1.0e+00\\
\bottomrule
\end{tabular}
    \label{tab:nmf-100-5000-20}
\end{table}
\end{document}